\newtheorem{theorem}{Theorem}
\newtheorem{conclusion}[theorem]{Conclusion}
\newtheorem{definition}{Definition}
\newtheorem{lemma}[theorem]{Lemma}
\newtheorem{rem}[theorem]{Remark}
\tikzstyle{arrow} = [thin,->,>=stealth]
\colorlet{lightRed}{red!15}
\colorlet{lightGrey}{black!8}
\colorlet{veryLightGrey}{black!3}
\newcommand{\rank }{\,{\rm rank}}
\def\reals{\mathbb{R}}
\def\eps{\varepsilon}
\def\bigo{{\mathcal O}}
\def\vres{V_{\rm res}}
\def\ncl{n_{\rm cl}}
\def\RH{r_{\rm H}}
\def\bbar{\overline}
\def\C{\mathcal P}
\def\qnm{Q_N^m}
\def\pkm{P_k^\mu}
\def\OX{the $x$ axis}
\def\OY{the $y$ axis}
\def\RS{${\cal RS}$ }
\newcommand{\cc}[1]{\mbox{CC${}_{#1}$}}
\newcommand{\re}[1]{{\rm RE}(#1)}
\newcommand{\ccrbp}[1]{\mbox{\rm rbp${}{(#1, a, b)}$}}
\title{Central configurations on the plane with  $N$ heavy and $k$ light bodies}
\author{Ma{\l}gorzata Moczurad and Piotr Zgliczy\'nski\footnote{Partially supported by the NCN grant  2019/35/B/ST1/00655}\ \footnote{Corresponding author}\\
   \{malgorzata.moczurad, piotr.zgliczynski\}@uj.edu.pl \\
Faculty of Mathematics and Computer Science, Jagiellonian University,\\
ul. prof. Stanis\l awa \L ojasiewicza 6,
30-348 Krak\'ow, Poland
}
\begin{document}
\maketitle

\begin{abstract}We study the problem of planar central configurations with $N$ heavy bodies and $k$ bodies with arbitrary small masses.
We derive the equation which describe the limit of light masses going to zero, which can be seen as the equation for central configurations in the anisotropic plane.  Using computer rigorous computations we compute
all central configurations for $N=2$ and $k=3,4$ and for the derived limit problem. We show that the results are consistent.
\end{abstract}

\tableofcontents


\section{Introduction}
The main problem of the classical celestial mechanics is the $n$-body problem, i.e. the description of the motion of $n$ point masses under their mutual Newtonian gravitational forces. This problem is entirely solved for $n = 2$, while for $n\ge 2$ only partial results exist.

A {\em central configuration}\index{central configuration}, denoted here as CC, is an initial configuration, such that if the particles were all released with zero velocity, they would collapse toward the center of mass at the same time.
In the planar case, CCs are initial positions for periodic solutions of the $n$-body problem where bodies move on a Keplerian elliptical orbits preserving the shape of the configuration.  The circular orbits give rise to solutions for which the configuration rotates at constant angular speed. 
This is called a relative equilibrium solution (see Figure~\ref{fig:homograph}). The other orbits 
give rise to solutions where radius and the angular speed are not constant.

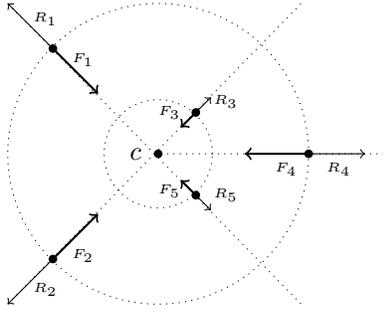
\begin{figure}[H]
  \centering
  \begin{tikzpicture}

\draw[dotted, thin] (1.5, 4.0) -- (5.4, 0.0);
\draw[dotted, thin] (1.5, 0.0) -- (5.4, 4.0);

\draw[dotted, thin] (6.5, 2.0) -- (3.5, 2.0);

\draw[fill] (3.5, 2.0) circle (0.05cm);
\node[] at (3.2, 2.0) {$c$};
\draw[dotted] (3.5, 2.0) circle (2.0cm);
\draw[dotted] (3.5, 2.0) circle (0.72cm);

\node[] at (2.0, 3.8) {\tiny $R_1$};
\node[] at (2.5, 3.25) {\tiny $F_1$};
\draw[->, thick] (2.1, 3.4) -- (2.7, 2.8);
\draw[->, thin] (2.1, 3.4) -- (1.5, 4.0);
\draw[fill] (2.1, 3.4) circle (0.05cm);

\node[] at (2.0, 0.2) {\tiny $R_2$};
\node[] at (2.5, 0.65) {\tiny $F_2$};
\draw[->,  thick] (2.1, 0.6) -- (2.7, 1.2);
\draw[->,  thin] (2.1, 0.6) -- (1.5, 0.0);
\draw[fill] (2.1, 0.6) circle (0.05cm);

\node[] at (4.4, 2.7) {\tiny $R_3$};
\node[] at (3.65, 2.55) {\tiny $F_3$};
\draw[->, thick] (4.0, 2.55) -- (3.8, 2.35);
\draw[->, thin] (4.0, 2.55) -- (4.2, 2.75);
\draw[fill] (4.0, 2.55) circle (0.05cm);

\node[] at (5.9, 1.8) {\tiny $R_4$};
\node[] at (5.2, 1.8) {\tiny $F_4$};
\draw[->, thick] (5.5, 2.0) -- (4.65, 2.0);
\draw[->, thin] (5.5, 2.0) -- (6.25, 2.0);
\draw[fill] (5.5, 2.0) circle (0.05cm);

\node[] at (3.65, 1.5) {\tiny $F_5$};
\node[] at (4.4, 1.45) {\tiny $R_5$};
\draw[->,  thick] (4.0, 1.45) -- (3.8, 1.65);
\draw[->,  thin] (4.0, 1.45) -- (4.2, 1.25);
\draw[fill] (4.0, 1.45) circle (0.05cm);

\end{tikzpicture}
\caption{If we give the particles an appropriate angular velocity such that gravitational forces $F_i$ balance (equilibrate) centrifugal forces $R_i$, they will move along circular orbits.}\label{fig:homograph}
\end{figure}

\subsection{The state of the art}
It is conjectured  in \cite{Wintner}  that there are finitely many  isometry classes of CCs. The conjecture appears as the sixth
problem of Smale's eighteen problems for the 21st century \cite{SmNext}. We refer to this problem in short as {\em CC finiteness}. From among abundant literature on the subject, let us mention two important works~\cite{HM} and \cite{AK} which aim at studying the problem in the most general setting and provide results for $n=4$ and $n=5$ with (almost) arbitrary masses.  
In \cite{HM} the authors give a computer assisted proof of CC finiteness for $n=4$ for any system of positive masses; in \cite{AK} the problem for $n=4$ is proven without computer assistance and the finiteness for $n=5$ is proven for arbitrary positive masses with the possible exception of those CCs where quintuples of positive masses belongs to a
given codimension 2 subvariety of the mass space. 
It is interesting to note that 
when masses are equal, there are finitely many isometry classes of CCs and this case belongs to the aforementioned subvariety (see~\cite{MZ}). 
A common feature of these works
is that they give a quite poor estimate for the maximum number of CCs. In this context it is worth to mention the work of Simo,  based on extensive numerical studies. In~\cite{Si78} he gives  the number of CCs for all possible masses for $n=4$.

The papers \cite{AK,HM,M01,HJ} which are concerned with CC finiteness for all masses study the polynomial equations derived from the equations for CC  using the algebraic geometry tools. 
The approach we pursue is different as we study the equations for CCs using rigorous numerics. 
While our results published so far (see~\cite{MZ,MZ20}) refer to the equal masses case, our program allows to process boxes in the mass space, hence in principle we can cover the whole mass space and find the number of CCs with all masses.  However, there are several obstacles in the case of different masses: some of the masses approaching zero and possible bifurcations. In the current paper, we try to overcome the former problem.

\subsection{Our contribution}
We look for CCs in the situation when the massess of certain bodies approach zero as the positions of these bodies move closer together (approaching the same point).  We derive equations for this limit problem, which can be seen as the equations for {\em central configurations in the anisotropic plane}. Our results can be seen as a generalization of the results by Xia \cite{X}, where the case of two light bodies was considered.   We perform computer assisted proofs of full listings of CCs in the anisotropic plane
for $k=3,4$. The anisotropy parameters correspond to CCs for two heavy bodies with equal masses and $k$ light bodies with very small equal masses
in the neighborhood of Lagrange point $L_4$ of the restricted two body problem. We also perform rigorous  computations of all CCs
for $(2+k)$-body problem in the corresponding region of configuration space. The results of both computations are in very good agreement.

The content of the paper is as follows. In Section~\ref{sec:cc-eq} we recall equations for central configurations. In Section~\ref{sec:red-sys-equiv} we derive the reduced set of equations for CCs where we remove the rotational symmetry and
the system of equations becomes amenable to a direct application of the implicit function theorem.  In Section~\ref{sec:cc-N+k} the limit of vanishing masses is studied, the limit problem is derived and basic theorem about the continuation of solutions of the limit problem is proved.
In Section~\ref{sec:restr-cc-light-bodies} we study general properties of the limit problem and we establish some a priori bounds. 
In Section~\ref{sec:analytic-sol} we find analytically several solutions of the limit problem. In Section~\ref{sec:cap}  we present computer assisted
proofs for the limit problem with $k=3,4$ and corresponding CCs for $(2+k)$-bodies with $k=3,4$.

Proofs which are either very simple, or analogous to those in~\cite{MZ} are contained in Appendix~\ref{proofs}.

\subsection{Notation}
Throughout the paper we use the following notation:
\begin{itemize}
\item body --- point characterized by its position $q\in\reals^2$ and the mass $m$;
\item CC -- central configuration
\item $\cc{n}$ -- central configuration of $n$ point masses (bodies); $q\in\cc{n}$ --- $q = (q_1, \ldots, q_n)$ is a central configuration of $n$ bodies;
\item $r_{ij} = |q_i  - q_j|$, where $q_i, q_j$ are positions of bodies.
\end{itemize}
Usually we use letters $p$ and $q$ for light and heavy bodies, respectively.  Uppercase $P$ and $Q$ denote their respective configurations, with subscripts and superscripts indicating the number of bodies and vector of masses. Both these parameters are consider constant when solving the equations for central configurations (see equation~(\ref{eq:cc-with-lambda})).
\begin{itemize}
\item  $\qnm$ and $P_k^{\mu}$, where
\begin{itemize}
\item $(q_1, \ldots, q_N)$ --- configuration of heavy bodies with masses $m_1, \ldots, m_N$, respectively; we use $m = (m_i)_{i = 1, \ldots, N}$
\item $(p_1, \ldots, p_k)$ --- configuration of light bodies with masses $\mu_1, \ldots, \mu_k$, respectively; use $\mu = (\mu_i)_{i = 1, \ldots, k}$
\item $x\in \re{Q_N}$ --- $x$ is a {\em relative normalized CC in the restricted $(N+1)$-body problem} for the configuration $Q_N$ (see Definition~\ref{def:rel-equi-restricted});
\item $\cc{}(W, \qnm, x, \mu)$ --- central configuration of potential  $W$  induced by $(Q_N, x)$ for $\mu$ (see Definition~\ref{def:cc-induced}).
\end{itemize}
If we indicate masses explicitly we sometimes omit superscripts and just we write $Q_N$ or $P_k$.

\end{itemize}

When we focus only on light bodies we use:
\begin{itemize}
\item \ccrbp{k}  -- CC of restricted $k$ light body problem (see Definition\ref{def:ccrbp}), solutions of the anisotropic central configuration
  problem (\ref{eqn:a-b}).
\end{itemize}


\section{Central configurations}
\label{sec:cc-eq}

Assume there are a group $\qnm \in \left(\mathbb{R}^2\right)^N$ of $N$ celestial bodies  interacting with each other gravitationally (i.e.\ due to inverse square Law of Gravitation; gravitational constant is normalized $G = 1$).
The {\em central configuration problem}   is to find positions of bodies  satisfying the following system of equations
\begin{equation}
  \lambda m_i(q_i-c) = \sum_{\substack{j=1\\
j\neq i}}^N \frac{m_im_j}{r_{ij}^3}(q_i - q_j)=: f_i(q_1,\dots,q_n), \quad i=1,\dots,n, \label{eq:cc-with-lambda}
\end{equation}
where $c$ is a center of mass of the configuration and $\lambda \in \mathbb{R}_+$ is a scaling factor.
It is well-known (see for example \cite{Mlect2014,MZ,AK}) that in equations~(\ref{eq:cc-with-lambda}) we can set $c=0$ obtaining
\begin{equation}
  -\lambda m_i q_i = \sum_{j \neq i} \frac{m_i m_j}{|q_i - q_j|^3} (q_j-q_i)=\frac{\partial U_N}{\partial q_i},\quad i=1,2,\dots, N \label{eq:cc}
\end{equation}
where
\begin{equation*}
  U_N=\sum_{1\leq i<j\leq N} \frac{m_i m_j}{|q_i - q_j|}.
\end{equation*}

\begin{definition}\cite{AK,MZ}
A \emph{normalized central configuration} $Q_N \in \left(\mathbb{R}^2\right)^N$ is a central configuration with $\lambda=1$ and $c=0$.
\end{definition}
From now on we focus on normalized central configurations.

We can reformulate equations (\ref{eq:cc}) with $\lambda=1$ as the problem of finding critical points for certain potential function.
Let us define potential
\begin{equation}
  V(q_1,\dots,q_N)= \sum_{i = 1}^N \frac{m_i q_i^2}{2} + U_N(q_1,\dots,q_N) = \sum_{i = 1}^N \frac{m_i q_i^2}{2} + \sum_{1\leq i<j\leq N} \frac{m_i m_j}{|q_i - q_j|}.   \label{eq:cc-pot}
\end{equation}
Note that for $\lambda=1$ the equations~(\ref{eq:cc}) become
 \begin{eqnarray}
 \frac{\partial V}{\partial q_i}(q_1,\dots,q_N) = 0, \quad i=1,\dots,N.   \label{eq:cc-DV}
 \end{eqnarray}

\section{The reduced system of equations for CC }
\label{sec:red-sys-equiv}

 The goal of this section is to derive a set of equations ({\em the reduced system of equations}), which gives all (equivalence classes of) CCs and which no longer has $\mathcal{SO}(d)$-symmetry, where $d$ is the dimension of the space. In our case $d = 2$ that means that we eliminate rotations around the origin. This section is an extension of the results of Section 5 in \cite{MZ} and Section 3 in \cite{MZ20}.

For the future use we introduce the function $F:\Pi_{i=1}^n\mathbb{R}^{d } \to \Pi_{i=1}^n\mathbb{R}^{d }$ given by
\begin{equation}\label{eq:vector-field}
  F_i(q_1,\dots,q_n) =  q_i - \sum_{j,j\neq i} \frac{m_j}{r_{ij}^3}(q_i - q_j), \quad i=1,\dots,n.
  \index{$F_i$}
\end{equation}
Then the system for normalized central configurations
\begin{equation}
  q_i= \sum_{j,j\neq i} \frac{m_j}{r_{ij}^3}(q_i - q_j)=:\frac{1}{m_i}f_i(q_1,\dots,q_n), \quad i=1,\dots,n.
  \index{$q_i$} \label{eq:cc-kart}
\end{equation}
becomes
\begin{equation}\label{eq:cc-abstract}
  F(q_1,\dots,q_n)=0.
  \index{$F$}
\end{equation}
It is well known (see \cite{MZ} and the literature given there) that for any $(q_1,\ldots,q_n)\in (\reals^d)^n$ holds
\begin{eqnarray}
\sum_{i=1}^n f_i&=&0, \label{eq:n-mom-con} \\
\sum_{i=1}^n f_i \wedge q_i & = & 0, \label{eq:n-angular-mom-con}
\end{eqnarray}
where $v \wedge w$\index{$v \wedge w$} is the exterior product of vectors, the result being an element of exterior algebra. If $d=2$ or 3 it can be interpreted as the vector product of $v$ and $w$ in dimension $3$.  The identities (\ref{eq:n-mom-con}) and (\ref{eq:n-angular-mom-con})
are easy consequences of the third Newton's law (the action equals reaction) and the requirement that the mutual forces between bodies are in direction of the other body.

Consider system (\ref{eq:cc-kart}). After multiplication of  $i$-th equation by $m_i$ and addition of all equations using (\ref{eq:n-mom-con}) we obtain (or rather recover)
the center of mass equation
\begin{eqnarray}
 \left(\sum_{i=1}^n m_i\right) c=\sum_i m_i q_i = 0. \label{eq:cc-cofmass-l}
\end{eqnarray}
We can take the equations for $n$-th body and replace it with (\ref{eq:cc-cofmass-l}) to obtain an equivalent system
\begin{subequations}
\begin{align}
  q_i&= \sum_{j,j\neq i} \frac{m_j}{r_{ij}^3}(q_i - q_j), \quad i=1,\dots,n-1, \label{eq:cc-kart-1n-1} \\
   q_n&=-\frac{1}{m_n}\sum_{i=1}^{n-1} m_i q_i. \label{eq:cc-kart-n-th}
   \index{$q_i$}
\end{align}\label{eqn:reduced}
\end{subequations}

\subsection{Non-degenerate solutions of full and reduced systems}

Following Moeckel \cite{Mlect2014} we state the following definition for any $d$, but we are interested in $d=2$, here.
\begin{definition}
  \label{def:non-deg-cc} We say that a normalized central configuration $q=(q_1,\dots,q_n)$ is \emph{non-degenerate}
  if the rank of $D\!F(q)$ is equal to $dn-\dim \mathcal{SO}(d)$. Otherwise the configuration is called \emph{degenerate}.
\end{definition}
The idea of the above notion of degeneracy is to allow only for the  degeneracy related to the rotational symmetry of the problem, because
by setting $\lambda=1$ in (\ref{eq:cc-with-lambda}) and keeping the masses fixed we removed the scaling symmetry.

\subsubsection{The center of mass reduction}
\label{subsubsec:com-red}

We  write the  system (\ref{eqn:reduced}) obtained from (\ref{eq:cc-abstract}) after removing the $n$-th body using the center of mass equation (condition (\ref{eq:cc-cofmass-l})) as
\begin{equation}\label{eq:cc-abstract-red}
  F_{\mathrm{red}}(q_1,\dots,q_{n-1})=0,
\end{equation}
where $F_{\mathrm{red}}: \Pi_{i=1}^{n-1}\mathbb{R}^{d} \to \Pi_{i=1}^{n-1}\mathbb{R}^{d} $.
To be precise we have
\begin{eqnarray*}
   F_{\mathrm{red},i}(q_1,\dots,q_{n-1})&=& F_i(q_1,\dots,q_{n-1},q_n(q_1,\dots,q_{n-1})),\quad i=1,\dots,n-1
\end{eqnarray*}
where
\begin{equation}
    q_n(q_1,\dots,q_{n-1})=-\frac{1}{m_n}\sum_{i=1}^{n-1} m_i q_i. \label{eq:cc-com}
\end{equation}

\begin{lemma}
\label{lem:rank-com-red}
If $q=(q_1,\dots,q_{n-1},q_n)$ is a normalized CC. Then
\begin{equation}
  \rank\left(D\! F_{\mathrm{red}}(q_1,\dots,q_{n-1})\right) = \rank\left(D\! F(q_1,\dots,q_{n-1},q_n)\right)-d.
  \end{equation}
\end{lemma}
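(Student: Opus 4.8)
The plan is to write $F_{\mathrm{red}}$ as a composition involving the center‑of‑mass chart, and then convert the claim into a statement about kernels that is settled by a single algebraic identity coming from (\ref{eq:n-mom-con}).

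First I would set $\Phi:\prod_{i=1}^{n-1}\reals^{d}\to\prod_{i=1}^{n}\reals^{d}$, $\Phi(q_1,\dots,q_{n-1})=\bigl(q_1,\dots,q_{n-1},q_n(q_1,\dots,q_{n-1})\bigr)$ with $q_n$ as in (\ref{eq:cc-com}), and let $\pi:\prod_{i=1}^{n}\reals^{d}\to\prod_{i=1}^{n-1}\reals^{d}$ be the projection onto the first $n-1$ blocks. By construction $F_{\mathrm{red}}=\pi\circ F\circ\Phi$, so the chain rule gives
\[
  D\!F_{\mathrm{red}}(q_1,\dots,q_{n-1})=\pi\circ D\!F(q)\circ\iota,\qquad q:=\Phi(q_1,\dots,q_{n-1}),
\]
where $\iota:=D\Phi$ is a constant injective linear map with image $W_0:=\{\,v\in(\reals^{d})^{n}:\sum_i m_i v_i=0\,\}$ (codimension $d$), and $\ker\pi=\{(0,\dots,0,z):z\in\reals^{d}\}$ has dimension $d$. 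Writing $A:=D\!F(q)$, rank--nullity applied on both sides shows that the asserted identity $\rank(\pi A\iota)=\rank A-d$ is equivalent to $\dim\ker(\pi A\iota)=\dim\ker A$, and since $\iota$ is an isomorphism onto $W_0$ this reduces to the set equality $\{\,w\in W_0:\ Aw\in\ker\pi\,\}=\ker A$.

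The one genuine ingredient is the momentum identity. Summing (\ref{eq:vector-field}) weighted by the masses, the mutual‑interaction terms cancel in pairs by (\ref{eq:n-mom-con}), giving $\sum_i m_i F_i(q)=\sum_i m_i q_i$ for all $q$; differentiating, $\sum_i m_i\,\partial F_i/\partial q_j=m_j\,\mathrm{Id}_d$ for each $j$. I would use this twice. If $v\in\ker A$, multiplying its $i$‑th block equation by $m_i$ and summing over $i$ yields $\sum_j m_j v_j=0$, so $\ker A\subseteq W_0$ and the inclusion ``$\supseteq$'' above is then immediate. Conversely, if $w\in W_0$ and $Aw=(0,\dots,0,z)$, the same weighted sum applied to $Aw$ gives $m_n z=\sum_i m_i (Aw)_i=\sum_j m_j w_j=0$, hence $z=0$ (here $m_n>0$ is used) and $w\in\ker A$; this is the inclusion ``$\subseteq$''. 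In fact this shows $\ker D\!F_{\mathrm{red}}(q')=\iota^{-1}(\ker D\!F(q))$, so the reduction is kernel‑preserving — which is exactly what makes a non‑degenerate CC in the sense of Definition~\ref{def:non-deg-cc} remain non‑degenerate for the reduced system.

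I expect the inclusion ``$\subseteq$'', i.e.\ ruling out a nonzero last block $z$ in the image, to be the crux of the argument: it is precisely where the weighted momentum identity and the positivity of $m_n$ enter, and it is the reason the center‑of‑mass reduction lowers the rank by exactly $d$ (one scalar constraint per spatial dimension). I would also note that the central‑configuration equations $F(q)=0$ themselves are not needed — only $c(q)=0$, which holds for normalized CCs — and that the angular‑momentum identity (\ref{eq:n-angular-mom-con}) plays no role here (it would instead enter an analogous reduction removing the rotational symmetry).
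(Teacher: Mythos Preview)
Your proof is correct. The paper leaves this lemma to the reader (``An easy proof is left to reader''), so there is no proof to compare against; your approach via the weighted momentum identity $\sum_i m_i F_i(q)=\sum_i m_i q_i$ (an immediate consequence of (\ref{eq:n-mom-con})) and the resulting kernel analysis is the natural one and is almost certainly what the authors had in mind. Your closing remark is also right: the argument uses only that $q$ lies in the image of $\Phi$, i.e.\ $\sum_i m_i q_i=0$, not the full equation $F(q)=0$.
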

An easy proof is left to reader.

For any configuration $q$ we set
\begin{equation}\label{eq:Ri}
  R_i(q_1,\dots,q_n)=m_iq_i - f_i(q_1,\dots,q_n), \quad i=1,\dots,n.
\end{equation}
With the above notation the system (\ref{eq:cc-abstract}) becomes
\begin{equation}
  R(q_1,\dots,q_n)=
  (R_1(q_1,\dots,q_n),\dots,R_n(q_1,\dots,q_n))=0. \label{eq:cc-R}
\end{equation}
For any $(q_1,\dots,q_{n-1}) \in (\mathbb{R}^2)^{n-1}$
we define
\begin{eqnarray}\label{eq:R-tylda}
  \tilde{R}_i(q_1,\dots,q_{n-1})=R_i(q_1,\dots,q_{n-1},q_n(q_1,\dots,q_{n-1})), \quad i=1,\dots,n.
\end{eqnarray}
Obviously $\rank(D\tilde{R})(q) = \rank(DF_{\mathrm{red}})(q) $.
Observe that for any $(q_1,\dots,q_{n-1}) \in (\mathbb{R}^d)^{n-1}$  holds
\begin{eqnarray}
  \tilde{R}_n(q_1,\dots,q_{n-1})=-\sum_{i=1}^{n-1}\tilde{R}_i(q_1,\dots,q_{n-1}). \label{eq:cc-com-red}
\end{eqnarray}
Indeed, by~(\ref{eq:cc-kart}) we have
\begin{eqnarray*}
\sum_{i = 1}^{n-1}f_i(q_1, \ldots, q_{n-1}, q_n(q_1,\dots,q_{n-1})) & = & \sum_{i = 1}^{n-1} m_iq_i
\end{eqnarray*}
and by~(\ref{eq:cc-com})
\begin{eqnarray*}
q_n(q_1, \ldots, q_{n-1}, q_n(q_1,\dots,q_{n-1})) & = & - \frac{1}{m_n} \sum_{i = 1}^{n-1} m_iq_i \\
  & = & -\frac{1}{m_n}\sum_{i = 1}^{n-1}f_i(q_1, \ldots, q_{n-1}, q_n(q_1,\dots,q_{n-1}))
\end{eqnarray*}
thus
\begin{eqnarray}
f_n(q_1, \ldots, q_{n-1}, q_n(q_1, \ldots, q_{n-1})) & = & m_nq_n(q_1, \ldots, q_{n-1}, q_n(q_1, \ldots, q_{n-1})) \nonumber\\
& = & - \sum_{i = 1}^{n-1}f_i(q_1, \ldots, q_{n-1}, q_n(q_1, \ldots, q_{n-1}))\label{eqn:fRed}
\end{eqnarray}
Now from~(\ref{eq:R-tylda}) and (\ref{eqn:fRed}) we obtain
\begin{eqnarray*}
 \tilde{R}_n(q_1,\dots,q_{n-1}) & = & R_n(q_1,\dots,q_{n-1},q_n(q_1,\dots,q_{n-1})) \\
 & = &  m_n q_n(q_1,\dots,q_{n-1})-f_n(q_1,\dots,q_{n-1},q_n(q_1,\dots,q_{n-1}))  \\
 & = & -\sum_{i=1}^{n-1} m_i q_i + \sum_{i=1}^{n-1}f_i(q_1,\dots,q_{n-1},q_n(q_1,\dots,q_{n-1})) \\
 & = & - \sum_{i=1}^{n-1}\tilde{R}_i(q_1,\dots,q_{n-1}).
\end{eqnarray*}
Observe that from (\ref{eq:n-angular-mom-con}) it follows that for any configuration $(q_1,\dots,q_n)$ holds
\begin{equation}
  \sum_{i=1}^n q_i \wedge R_i(q_1,\dots,q_n)=0.  \label{eq:qiwedgeRi}
\end{equation}
In particular for
 $q_n=q_n(q_1,\dots,q_{n-1})$ we obtain from (\ref{eq:qiwedgeRi}) and (\ref{eq:cc-com-red})
\begin{eqnarray}
  0 & = & \sum_{i=1}^{n-1}q_i \wedge \tilde{R}_i \left(q_1,\dots,q_{n-1}\right) + q_n(q_1,\dots,q_{n-1}) \wedge \tilde{R}_n \left(q_1,\dots,q_{n-1}\right) \nonumber\\
  & = &
  \sum_{i=1}^{n-1}(q_i-q_n) \wedge \tilde{R}_i(q_1,\dots,q_{n-1}). \label{eq:c-angmom-red}
\end{eqnarray}

\subsection{The reduced system \RS}\label{sec:red-sys}

The system of equations (\ref{eq:cc-kart}) is degenerate due to the presence of the $SO(d)$-symmetry.
In this section we  remove this symmetry and introduce the reduced system of equations.
Let us set $d=2$.
We  use the notation  $\tilde{R}_i=(\tilde{R}_{i,x},\tilde{R}_{i,y})$.
Let us fix $k_1\in \{1,\dots,n-1\}$, and consider the following set of equations
\begin{subequations}
\begin{align}
  q_i &= \frac{1}{m_i}f_i(q_1,\dots,q_n(q_1,\dots,q_{n-1})), \quad i\in \{1,\dots,n-1\}, i \neq k_1\label{eq:cc-red-i} \index{$q_i$}\\
  x_{k_1} &=  \frac{1}{m_{k_1}}f_{k_1,x}(q_1,\dots,q_n(q_1,\dots,q_{n-1})), \label{eq:cc-red-xk1}
\end{align}\label{eqn:cc-ref}
\end{subequations}
where $f_i = (f_{i,x}, f_{i, y})$.\index{$f_{k,x}$}
In the sequel, we use the abbreviation \RS to denote the reduced system (\ref{eqn:cc-ref}).
\RS  coincides with~(\ref{eqn:reduced}) with the
equation for $y_{k_1}$ dropped.  Observe that \RS has $2(n-1)-1$ equations for $q_1,\dots,q_{n-1} \in \mathbb{R}^2$. To obtain the same number
of variables we set $q_{k_1,y}=0$.
\RS no longer has $O(2)$ as a symmetry group, but still it is symmetric with respect to the reflections against
the coordinate planes.
Using the notation introduced in Section~\ref{subsubsec:com-red} \RS can be written as
\begin{subequations}
\begin{align}
  \tilde{R}_i(q_1,\dots,q_{n-1}) &=0, \quad i\in \{1,\dots,n-1\}, i \neq k_1 \label{eq:cc-R-red-i} \\
  \tilde{R}_{k_1,x}(q_1,\dots,q_{n-1}) &=0, \label{eq:cc-R-red-xk1},
\end{align}\label{eq:cc-R-red-xy}
\end{subequations}
with the requirement that $y_{k_1}=0$.
The next theorem addresses the question: whether from \RS we
obtain the solution of (\ref{eq:cc-kart})?

\begin{theorem}\label{thm:red-to-full}
   If $q = (q_1,\dots,q_n)$ is a solution of \RS and
    \begin{equation}\label{eqn:A1}
 x_{k_1} \neq x_n,
    \end{equation}
     then
    it is a normalized central configuration, i.e.\ it satisfies (\ref{eq:cc-kart}).
\end{theorem}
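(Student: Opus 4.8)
The plan is to show that if $q=(q_1,\dots,q_n)$ solves \RS\ (with $y_{k_1}=0$) and satisfies $x_{k_1}\neq x_n$, then in fact all the dropped equations hold, i.e.\ $\tilde R_i=0$ for all $i\in\{1,\dots,n-1\}$ including the $y$-component for $i=k_1$; by Lemma~\ref{lem:rank-com-red} (and the reconstruction~(\ref{eq:cc-kart-n-th})) this is equivalent to (\ref{eq:cc-kart}). From the hypotheses we already know $\tilde R_i=0$ for $i\neq k_1$ and $\tilde R_{k_1,x}=0$, so the only thing left to prove is $\tilde R_{k_1,y}=0$.

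The key tool is the angular-momentum identity~(\ref{eq:c-angmom-red}), which holds for \emph{every} configuration $(q_1,\dots,q_{n-1})$:
\begin{equation*}
  \sum_{i=1}^{n-1}(q_i-q_n)\wedge \tilde R_i(q_1,\dots,q_{n-1})=0.
\end{equation*}
Substituting the known vanishing $\tilde R_i=0$ for $i\neq k_1$, the sum collapses to the single term
\begin{equation*}
  (q_{k_1}-q_n)\wedge \tilde R_{k_1}=0 .
\end{equation*}
Writing this out in coordinates with the exterior product interpreted as the scalar $v\wedge w=v_xw_y-v_yw_x$, and using $y_{k_1}=0$ together with $\tilde R_{k_1,x}=0$, I get
\begin{equation*}
  (x_{k_1}-x_n)\,\tilde R_{k_1,y} - (0-y_n)\cdot 0 = (x_{k_1}-x_n)\,\tilde R_{k_1,y}=0 .
\end{equation*}
Since $x_{k_1}\neq x_n$ by assumption~(\ref{eqn:A1}), this forces $\tilde R_{k_1,y}=0$. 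Hence all of $\tilde R_1,\dots,\tilde R_{n-1}$ vanish, so $F_{\mathrm{red}}(q_1,\dots,q_{n-1})=0$; by~(\ref{eq:cc-com-red}) also $\tilde R_n=0$, and reconstructing $q_n$ via~(\ref{eq:cc-com}) gives $R(q_1,\dots,q_n)=0$, i.e.\ $q$ is a normalized central configuration satisfying~(\ref{eq:cc-kart}).

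The only subtle point — and the place one must be careful — is the bookkeeping in the exterior/coordinate computation: one has to confirm that the term $y_n\cdot \tilde R_{k_1,x}$ really drops out (it does, because $\tilde R_{k_1,x}=0$ from~(\ref{eq:cc-R-red-xk1})), so that the wedge identity genuinely isolates the factor $(x_{k_1}-x_n)\tilde R_{k_1,y}$ rather than a mixture of the two unknown components. There is no analytic difficulty here; the whole argument is the observation that the global angular-momentum constraint, once most of the reduced equations are imposed, pins down the one remaining equation provided the geometric non-degeneracy condition $x_{k_1}\neq x_n$ prevents the wedge from degenerating.
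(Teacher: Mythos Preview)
Your proof is correct and follows essentially the same argument as the paper: use the angular-momentum identity~(\ref{eq:c-angmom-red}) to reduce to $(q_{k_1}-q_n)\wedge\tilde R_{k_1}=0$, expand in coordinates, and conclude from $x_{k_1}\neq x_n$. One minor remark: you invoke $y_{k_1}=0$ in the coordinate expansion, but this is unnecessary --- the term $(y_{k_1}-y_n)\tilde R_{k_1,x}$ already vanishes because $\tilde R_{k_1,x}=0$, and indeed the paper explicitly notes after its proof that the assumption $y_{k_1}=0$ is not used.
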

\proof
Without any loss of the generality we can assume that $k_1=n-1$.
We need to show that $\tilde{R}_{n-1,y}(q)=0$.
From (\ref{eq:c-angmom-red}) and~(\ref{eq:cc-R-red-xy}) we have
\begin{eqnarray*}
0 & = &\sum_{i=1}^{n-1}(q_i-q_n) \wedge \tilde{R}_i(q) =  (q_{n-1}-q_n)\wedge \tilde{R}_{n-1}(q)\\
  & = & (x_{n-1}-x_n) \tilde{R}_{n-1,y}(q)
\end{eqnarray*}
and our assertion follows immediately.
\qed

Observe that in the proof we did not use the assumption that $y_{k_1}=0$.

\subsection{The non-degenerate CC and the reduced system}

\begin{theorem}
\label{thm:non-deg-RS}
 Assume that $q=(q_1,\ldots,q_n)$  is a non-degenerate normalized CC. Then in a suitable coordinate system and after  some permutation of bodies   $q$ is a non-degenerate solution of \RS.
\end{theorem}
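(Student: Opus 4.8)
The plan is to start from a non-degenerate normalized CC $q=(q_1,\dots,q_n)$ and produce the right coordinate system and ordering of bodies so that the reduced system \RS\ is satisfied \emph{and} its Jacobian has full rank $2(n-1)-1$. First I would perform the center-of-mass reduction: by Lemma~\ref{lem:rank-com-red}, $\rank D\!F_{\mathrm{red}}(q_1,\dots,q_{n-1}) = \rank D\!F(q) - 2 = (2n-\dim\mathcal{SO}(2)) - 2 = 2n-3 = 2(n-1)-1$, so the center-of-mass-reduced map $\tilde R$ already has the rank one expects for \RS, but it still carries the residual $\mathcal{SO}(2)$-degeneracy: its kernel contains the infinitesimal-rotation vector $\xi=(\,-y_1,x_1,\dots,-y_{n-1},x_{n-1})$ (the tangent to the rotation orbit through $q$, restricted to the first $n-1$ bodies). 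Since $\rank D\tilde R = 2(n-1)-1$, the kernel of $D\tilde R(q)$ is exactly one-dimensional, spanned by $\xi$.

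Next I would choose the coordinate system and the label $k_1$. Rotating the whole configuration does not change whether $q$ is a CC, so I may rotate so that some body — call it the one that will play the role of $k_1$ (taken to be $n-1$ after relabeling, as in the proof of Theorem~\ref{thm:red-to-full}) — lies on \OX, i.e.\ $y_{k_1}=0$; and by a permutation of bodies I can ensure that the $n$-th body (the one eliminated by the center-of-mass relation) is \emph{not} at the same $x$-coordinate, i.e.\ $x_{k_1}\neq x_n$. Here one must check that such a choice is possible: not all bodies of a CC can be collinear on a vertical line through $q_{k_1}$ — in fact no planar CC is contained in a line unless $n\le 3$, and even for the collinear case one can rotate the line off the vertical; more robustly, since $q_{k_1}\neq q_n$ (distinct bodies) we can rotate so that the vector $q_{k_1}-q_n$ is not vertical, placing $q_{k_1}$ on \OX\ is then a further rotation that we absorb, or we simply pick the rotation making $q_{k_1}-q_n$ horizontal and translate—the precise bookkeeping is routine. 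With $y_{k_1}=0$ fixed and the $y_{k_1}$-equation dropped, \RS\ is precisely the system $\tilde R_i=0$ ($i\neq k_1$), $\tilde R_{k_1,x}=0$ in the $2(n-1)-1$ variables $(q_1,\dots,q_{k_1-1},x_{k_1},q_{k_1+1},\dots,q_{n-1})$, and $q$ solves it because it solves the full reduced system.

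It remains to prove non-degeneracy of $q$ as a solution of \RS, i.e.\ that the $\big(2(n-1)-1\big)\times\big(2(n-1)-1\big)$ Jacobian $J$ of \RS\ at $q$ is invertible. The idea is that $J$ is obtained from $D\tilde R(q)$ by deleting the row corresponding to $\tilde R_{k_1,y}$ and the column corresponding to $\partial/\partial y_{k_1}$. I would argue: (i) deleting the $y_{k_1}$-column from $D\tilde R(q)$ still leaves rank $2(n-1)-1$, because the kernel of $D\tilde R(q)$ is spanned by the rotation vector $\xi$, whose $y_{k_1}$-component is $x_{k_1}$, and this is the only kernel direction — if $x_{k_1}\neq0$ the column removal drops the kernel to zero on the remaining coordinates; if $x_{k_1}=0$ one instead uses that $\xi$'s $y_{k_1}$-entry being $0$ forces $\xi$ to lie in the coordinate subspace, contradicting $\ker$-analysis unless handled by a slightly different rotation choice (so I would additionally arrange $x_{k_1}\neq0$ in the coordinate choice above, which is compatible with $y_{k_1}=0,\ x_{k_1}\neq x_n$ as long as $q_{k_1}\neq0$, and $q_{k_1}=0$ can be excluded since then the center of mass $0$ coincides with a body, still possible only in degenerate-looking configurations—again a routine case check, or absorbed by relabeling $k_1$). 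After (i), the map $\tilde R$ restricted to the hyperplane $\{y_{k_1}=0\}$ has surjective differential onto $(\mathbb R^2)^{n-1}$; (ii) then deleting the $\tilde R_{k_1,y}$-row is harmless because, by the angular-momentum identity~(\ref{eq:c-angmom-red}) specialized as in the proof of Theorem~\ref{thm:red-to-full}, on the solution set of the other equations one has $(x_{k_1}-x_n)\tilde R_{k_1,y}=0$, so with $x_{k_1}\neq x_n$ the dropped equation $\tilde R_{k_1,y}=0$ is functionally dependent on the kept ones in a neighborhood, hence its row does not lower the rank of the retained block. Combining (i) and (ii) gives that $J$ is a square matrix of full rank $2(n-1)-1$, i.e.\ $q$ is a non-degenerate solution of \RS.

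The main obstacle I anticipate is step (ii) — making rigorous the claim that removing the $\tilde R_{k_1,y}$ row does not drop the rank. The clean way is linear-algebraic: show that the row $D\tilde R_{k_1,y}(q)$ lies in the span of the other rows of $D\tilde R(q)$, by differentiating the identity $\sum_{i=1}^{n-1}(q_i-q_n)\wedge\tilde R_i = 0$ at the point $q$ where all $\tilde R_i(q)=0$; the terms where $\tilde R_i$ is differentiated survive and yield exactly a linear relation expressing $(x_{k_1}-x_n)\,D\tilde R_{k_1,y}(q)$ as a combination of the differentials $D\tilde R_i(q)$ for $i\neq k_1$ together with $D\tilde R_{k_1,x}(q)$, with coefficients the components of $q_i-q_n$. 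Since $x_{k_1}\neq x_n$, this shows $D\tilde R_{k_1,y}(q)$ is redundant, so its deletion preserves rank; combined with (i), the reduced $2(n-1)-1$ square Jacobian is nonsingular. The subsidiary case bookkeeping ($q_{k_1}=0$, collinear configurations, choosing a body with $x_{k_1}\neq x_n$) is the kind of thing I would handle with a short paragraph of genericity remarks or by invoking that such degenerate coordinate placements can always be avoided by relabeling and a further rotation, since the argument never needed a \emph{specific} $k_1$.
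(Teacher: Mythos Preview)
Your proposal is correct and follows essentially the same route as the paper: reduce via Lemma~\ref{lem:rank-com-red} to get $\rank D\tilde R=2(n-1)-1$; show the deleted row $D\tilde R_{k_1,y}$ is a combination of the others by differentiating the angular-momentum identity~(\ref{eq:c-angmom-red}) at $q$ (exactly your ``clean way'' in the last paragraph); and show the deleted column $\partial/\partial y_{k_1}$ is redundant via the rotational symmetry --- your kernel argument through $\xi$ is just the column relation obtained from differentiating $\tilde R(O(t)q)=0$, which is how the paper phrases it.

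Two small corrections. First, your aside that ``no planar CC is contained in a line unless $n\le 3$'' is false --- Moulton's theorem gives collinear CCs for every $n$ --- though fortunately you do not rely on it. Second, the coordinate/labeling ``routine bookkeeping'' that you leave vague is handled cleanly in the paper and is worth stating precisely: pick any $q_{i_0}\neq 0$ (possible since at most one body sits at the origin), rotate so $q_{i_0}=(x_{i_0},0)$ with $x_{i_0}\neq 0$; then the center-of-mass condition $\sum_i m_i x_i=0$ forces the existence of some $j$ with $x_j\neq x_{i_0}$; relabel $i_0\mapsto n-1$ and $j\mapsto n$. This simultaneously secures $y_{k_1}=0$, $x_{k_1}\neq 0$ (needed for your column/kernel step), and $x_{k_1}\neq x_n$ (needed for the row step), without the circular rotation-juggling in your sketch.
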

\proof
From the non-degeneracy assumption it follows that
\begin{equation*}
\rank(DF(q))=2n-1.
\end{equation*}
From this and Lemma~\ref{lem:rank-com-red} we obtain
\begin{equation*}
\rank\left(D\! \tilde{R}(q_1,\dots,q_{n-1})\right) = \rank\left(D\! F_{\mathrm{red}}(q_1,\dots,q_{n-1})\right)=2(n-1) -1.
\end{equation*}
Assume that we have $q_{n-1}=(x_{n-1},0)$, $x_{n-1}\neq x_n$ (see condition~(\ref{eqn:A1}) in Theorem~\ref{thm:red-to-full}), $x_{n-1} \neq 0$ and consider \RS
on the subspace $q_1,\dots,q_{n-2} \in \mathbb{R}^2$ and $x_{n-1} \in \mathbb{R}$. Now \RS has the same number of equations and variables equal to $2(n-1)-1$. We want to show that  the Jacobian matrix of this reduced system has the rank $2(n-1)-1$, which implies that $q$ is a non-degenerate
solution of \RS.
We have that $ D\! \tilde{R}(q_1,\dots,q_{n-1})$ is
\begin{eqnarray*}
\begin{bmatrix}
                                        D\tilde{R}_{1,x} \\[2ex]
                                        D\tilde{R}_{1,y} \\[2ex]
                                         D\tilde{R}_{2,x} \\[2ex]
                                        D\tilde{R}_{2,y} \\[2ex]
                                        \dots \\
                                         D\tilde{R}_{n-1,x} \\[2ex]
                                        D\tilde{R}_{n-1,y} \\
                                      \end{bmatrix}
                                      &  = &
                                      \begin{bmatrix}
                                      \frac{\partial \tilde{R}_{1,x}}{\partial x_1}, & \frac{\partial \tilde{R}_{1,x}}{\partial y_1}, & \frac{\partial \tilde{R}_{1,x}}{\partial x_2}, & \frac{\partial \tilde{R}_{1,x}}{\partial y_2}, & \dots, & \frac{\partial \tilde{R}_{1,x}}{\partial x_{n-1}}, & \frac{\partial \tilde{R}_{1,x}}{\partial y_{n-1}} \\[2ex]
                                      \frac{\partial \tilde{R}_{1,y}}{\partial x_1}, & \frac{\partial \tilde{R}_{1,y}}{\partial y_1}, & \frac{\partial \tilde{R}_{1,y}}{\partial x_2}, & \frac{\partial \tilde{R}_{1,y}}{\partial y_2}, & \dots, & \frac{\partial \tilde{R}_{1,y}}{\partial x_{n-1}}, & \frac{\partial \tilde{R}_{1,y}}{\partial y_{n-1}} \\[2ex]
                                      \frac{\partial \tilde{R}_{2,x}}{\partial x_1}, & \frac{\partial \tilde{R}_{2,x}}{\partial y_1}, & \ldots, & \ldots, & \dots,&  \frac{\partial \tilde{R}_{2,x}}{\partial x_{n-1}}, & \frac{\partial \tilde{R}_{2,x}}{\partial y_{n-1}} \\[2ex]
                                      \frac{\partial \tilde{R}_{2,y}}{\partial x_1}, & \frac{\partial \tilde{R}_{2,y}}{\partial y_1}, & \ldots, & \ldots, & \ldots, & \frac{\partial \tilde{R}_{2,y}}{\partial x_{n-1}}, & \frac{\partial \tilde{R}_{2,y}}{\partial y_{n-1}} \\[2ex]
                                      ...\\[2ex]
                                      \frac{\partial \tilde{R}_{n-1,y}}{\partial x_1}, & \frac{\partial \tilde{R}_{n-1,y}}{\partial y_1}, & \ldots, & \ldots, & \dots, & \frac{\partial \tilde{R}_{n-1,y}}{\partial x_{n-1}}, & \frac{\partial \tilde{R}_{n-1,y}}{\partial y_{n-1}}
                                      \end{bmatrix}\\[1ex]
& = & \begin{bmatrix}
\mbox{}\quad  \frac{\partial \tilde{R}}{\partial x_1}, \mbox{}\quad\quad &   \frac{\partial \tilde{R}}{\partial y_1},
                                                         & \mbox{}\quad\quad  \dots,
                                                         & \mbox{}\quad \dots
                                                         & \mbox{}\quad \dots
                                                         &  \mbox{}\quad \frac{\partial \tilde{R}}{\partial x_{n-1}},
                                                         & \mbox{}\quad  \frac{\partial \tilde{R}}{\partial y_{n-1}}
                                                    \end{bmatrix}
\end{eqnarray*}
Observe that the jacobian matrix of \RS is equal $D\! \tilde{R}(q_1,\dots,q_{n-1})$ with removed the last column (which is consequence of the restriction to $y_{n-1}=0$) and the last row (which is consequence of dropping the equation $\tilde{R}_{n-1,y}=0$). We need to show that such removal does not change the rank of the matrix.
From  (\ref{eq:c-angmom-red}) we have
\begin{equation}
 0=\sum_{i=1}^{n-1}(q_i-q_n) \wedge \tilde{R}_i(q_1,\dots,q_{n-1}).
\end{equation}
By taking partial derivatives of the above equation with respect to $x_j$ and $y_j$ and evaluating at $q$ (we have $\tilde{R}_i(q)=0$) we obtain
\begin{equation}
  0=\sum_{i=1}^{n-1}(q_i-q_n) \wedge D \tilde{R}_i(q_1,\dots,q_{n-1})= \sum_{i=1}^{n-1}\left((x_i-x_n)  D \tilde{R}_{i,y}- (y_i-y_n)  D \tilde{R}_{i,x} \right)
\end{equation}
The above equation is the linear combination of the rows in the matrix $D \tilde{R}$. If $x_{n-1}-x_n \neq 0$, then   the last row can be expressed as the linear combination of other rows, hence it can be removed from the matrix without changing its rank.

Now we show that last row $\frac{\partial \tilde{R}}{\partial y_{n-1}}$ is a linear combination of other rows.
Let $O(t)$ be the rotation by angle $t$.  It acts on configuration $q$ as follows $q_i(t) = O(t)q_i$ and $q(t)=(q_1(t),\dots,q_{n-1}(t),q_n(t))$
is a normalized central configuration if $q$ is.   Observe that $\frac{d}{dt}q_i(t)_{t=0}=(-y_i,x_i)$.
From the rotational symmetry we have
\begin{equation}
  \tilde{R}(O(t)q)=0
\end{equation}
By  taking the derivative with respect to the angle  for $t=0$ we obtain
\begin{equation}
  0=\sum_{i=1}^{n-1}\left( -\frac{\partial \tilde R}{\partial x_i}y_i + \frac{\partial \tilde R}{\partial y_i}x_i \right)
\end{equation}
If $x_{n-1} \neq 0$, then we can express  $\frac{\partial \tilde R}{\partial y_{n-1}}$ (the last column in the matrix $D \tilde{R}$)
in terms of other columns.
Therefore we can remove the last row and the last column from the matrix $D \tilde{R}(q)$ without decreasing its rank. Therefore the rank
of \RS is $2(n-1)-1$, which shows the non-degeneracy of $q$ as the solution of \RS.

Now the question remains: whether we can always achieve that $x_{n-1} \neq x_n$ and $x_{n-1} \neq 0$. First we take any $q_{i_0}\neq 0$ (there can be only one body at the origin) and we chose coordinate frame so that $q_{i_0}=(x_{i_0},0)$. Then we look for $j \neq i_0$ such that $x_{j} \neq x_{i_0}$.  Observe that due to (\ref{eq:cc-cofmass-l}) such $j$ always exists.  Now we change the numeration of bodies so that $i_0 \to n-1$ and $j \to n$.
\qed

\begin{rem}
From the proof of the above theorem it follows that a normalized CC, such that $x_{n-1} = x_n$ and $y_{n-1}=0$ is always a degenerate solution
of \RS.
\end{rem}

\begin{theorem}
\label{thm:RS-non-deg-CC}
  Assume that $q=(q_1,\dots,q_n)$  is a non-degenerate solution of \RS, such that $x_{n-1} \neq x_n $. Then $q$ is a non-degenerate normalized central configuration.
\end{theorem}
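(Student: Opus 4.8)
The plan is to reverse the logic of Theorem~\ref{thm:non-deg-RS}. We are given that $q=(q_1,\dots,q_n)$ solves \RS\ with $x_{n-1}\neq x_n$; by Theorem~\ref{thm:red-to-full} (with $k_1=n-1$) this already guarantees that $q$ is a normalized central configuration, so $\tilde R_i(q)=0$ for all $i=1,\dots,n-1$ and hence $F_{\mathrm{red}}(q_1,\dots,q_{n-1})=0$ and $F(q)=0$. It remains only to control the rank. Write $J$ for the Jacobian of \RS, i.e.\ the $(2(n-1)-1)\times(2(n-1)-1)$ matrix obtained from $D\tilde R(q_1,\dots,q_{n-1})$ by deleting the row $D\tilde R_{n-1,y}$ and the column $\partial\tilde R/\partial y_{n-1}$. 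Non-degeneracy of $q$ as a solution of \RS\ means $\rank J = 2(n-1)-1$, i.e.\ $J$ is invertible. We must deduce $\rank(DF(q))=2n-1$.

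First I would recover $\rank(D\tilde R(q))$ from $\rank J$. The two linear relations used in the proof of Theorem~\ref{thm:non-deg-RS} hold here as well: differentiating the angular-momentum identity~(\ref{eq:c-angmom-red}) and evaluating at the central configuration $q$ (where all $\tilde R_i(q)=0$) gives that $D\tilde R_{n-1,y}$ is a linear combination of the remaining rows of $D\tilde R$, provided $x_{n-1}-x_n\neq0$ — which is exactly our hypothesis. So deleting that row does not change the rank of $D\tilde R$. For the column: after the initial normalization we may assume $x_{n-1}\neq 0$ (this needs a short remark, see below), and then differentiating the rotational-invariance identity $\tilde R(O(t)q)=0$ at $t=0$ shows that $\partial\tilde R/\partial y_{n-1}$ is a linear combination of the other columns, so deleting it also preserves rank. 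Hence $\rank(D\tilde R(q)) = \rank J = 2(n-1)-1$. Then Lemma~\ref{lem:rank-com-red} (together with $\rank(D\tilde R)=\rank(DF_{\mathrm{red}})$) gives $\rank(DF(q)) = \rank(DF_{\mathrm{red}}) + d = (2(n-1)-1) + 2 = 2n-1 = dn - \dim\mathcal{SO}(2)$, which is precisely non-degeneracy of $q$ in the sense of Definition~\ref{def:non-deg-cc}.

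The one genuinely delicate point — the main obstacle — is the normalization $x_{n-1}\neq 0$, which is used to remove the last column. In Theorem~\ref{thm:non-deg-RS} one was free to choose the coordinate frame and the labelling of bodies to force both $x_{n-1}\neq x_n$ and $x_{n-1}\neq 0$, but here the solution of \RS\ comes with a fixed frame (the one in which $y_{n-1}=0$ and the $x$-equation for body $n-1$ is imposed) and a fixed labelling, so we cannot re-choose them. If it happens that $x_{n-1}=0$ — and also $y_{n-1}=0$ by construction — then body $n-1$ sits at the origin; since $x_{n-1}\neq x_n$ forces $x_n\neq0$, body $n$ is not at the origin, and by~(\ref{eq:cc-cofmass-l}) at least one further body is off the line through the origin and $q_n$, so in fact one can still argue that the column relation degenerates harmlessly, or alternatively invoke the Remark after Theorem~\ref{thm:non-deg-RS} to rule this case out. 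I would handle it by observing that if $x_{n-1}=0$ then the rotational-invariance relation only shows $\partial\tilde R/\partial y_{n-1}$ lies in the span of the columns $\partial\tilde R/\partial x_i,\partial\tilde R/\partial y_i$ scaled by $x_i,y_i$; since $D\tilde R$ already has full rank $2(n-1)-1$ and one column is being removed, the removed column is automatically dependent on the others exactly when that rank is preserved, and the hypothesis of non-degeneracy of \RS\ forces this. Everything else is the routine linear-algebra bookkeeping already carried out in the previous two theorems, run in the opposite direction.
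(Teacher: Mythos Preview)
Your overall strategy is correct and is surely what the authors had in mind (they write only ``An easy proof is left to reader''): use Theorem~\ref{thm:red-to-full} to get that $q$ is a normalized CC, then combine the rank of the \RS\ Jacobian with Lemma~\ref{lem:rank-com-red} to obtain $\rank DF(q)=2n-1$.

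However, you make the argument harder than it needs to be by invoking the column relation, and this is what drags you into the $x_{n-1}=0$ discussion. In this direction the column relation is unnecessary. Since $J$ is a submatrix of $D\tilde R(q)$, one has immediately $\rank D\tilde R(q)\ge \rank J = 2(n-1)-1$. On the other hand, differentiating the angular-momentum identity~(\ref{eq:c-angmom-red}) at $q$ yields a nontrivial linear relation among the \emph{rows} of $D\tilde R(q)$ (the coefficient of $D\tilde R_{n-1,y}$ is $x_{n-1}-x_n\neq 0$), so $\rank D\tilde R(q)\le 2(n-1)-1$. Hence equality holds, and Lemma~\ref{lem:rank-com-red} finishes the proof. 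No hypothesis on $x_{n-1}$ itself is needed, and the paragraph about the case $x_{n-1}=0$ can simply be dropped.
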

An easy proof is left to reader.

\section{CCs for $(N+k)$-bodies}
\label{sec:cc-N+k}

\subsection{Some a-priori bounds for normalized CCs}
\label{subsec:apriori-bnds}

Let us recall two theorems from \cite{MZ}: first states an upper bound on the size of a normalized CC; second states the lower bound for distances between bodies in normalized CC. We use them repeatedly in the sequel.

\begin{theorem}\cite[Thm. 11]{MZ}
\label{thm:ncc-upperBound}
Given positive masses $m_i$, $i=1,\dots,n$. Let $M=\sum_i m_i$.
Assume  $q = (q_1, \ldots, q_n)$ is a normalized $\cc{n}$. Then
\renewcommand{\arraystretch}{1.6}
\begin{equation}
  \max_{i} |q_i| \leqslant M^{1/3} \left\{
                                   \begin{array}{ll}
                                     n-1, & \hbox{$n\geqslant  2$;} \\
                                     \left(2^{1/3}+2^{-2/3} \right) (n-2)^{2/3}, & \hbox{$n\geqslant  4$.}
                                   \end{array}
                                 \right.
\end{equation}
\renewcommand{\arraystretch}{1.2}
\end{theorem}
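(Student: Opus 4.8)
The plan is to localize the estimate to an \emph{extreme} body. Put the center of mass at the origin, so $\sum_i m_i q_i = 0$; since the center of mass is a convex combination of the bodies, $|q_k| = \big|\sum_i \tfrac{m_i}{M}(q_k-q_i)\big| \le \max_i r_{ki} \le D$, where $D := \max_{k,l} r_{kl}$, so it suffices to bound $D$. For the first (linear in $n$) estimate I would in fact work directly with the body $i_0$ realizing $\rho := \max_i |q_i|$, and for the refined $n\ge4$ estimate with the pair $i_1,i_2$ realizing $D$.

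The first key step: pair the normalized CC equation $q_{i_0} = \sum_{j\ne i_0} \frac{m_j}{r_{i_0 j}^3}(q_{i_0}-q_j)$ with $q_{i_0}$. Since $|q_j| \le |q_{i_0}| = \rho$ for every $j$, each $\langle q_{i_0}-q_j, q_{i_0}\rangle = \rho^2 - \langle q_j,q_{i_0}\rangle \ge \rho^2 - |q_j|\rho \ge 0$, so the sum $\rho^2 = \sum_{j\ne i_0}\frac{m_j}{r_{i_0 j}^3}\langle q_{i_0}-q_j,q_{i_0}\rangle$ consists of nonnegative terms — this is the decisive point, ruling out cancellation among the possibly enormous reciprocal-cube terms. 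Cauchy--Schwarz gives $\langle q_{i_0}-q_j, q_{i_0}\rangle \le r_{i_0 j}\rho$, hence $\rho \le \sum_{j\ne i_0} m_j/r_{i_0 j}^2$. For $n\ge4$ I would run the same argument at $i_1$ and at $i_2$ against $\hat u := (q_{i_1}-q_{i_2})/D$ — relative to which $i_1$ resp.\ $i_2$ is extreme, so the relevant inner products lie in $[0,D]$ — and add the two identities, obtaining $D = \frac{m_{i_1}+m_{i_2}}{D^2} + \sum_{j\ne i_1,i_2}\big(\frac{m_j\alpha_j}{r_{i_1 j}^3} + \frac{m_j(D-\alpha_j)}{r_{i_2 j}^3}\big)$ with $\alpha_j := \langle q_{i_1}-q_j,\hat u\rangle \in [0,D]$ and all summands nonnegative.

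The main obstacle is that $m_j/r_{i_0 j}^2$ blows up when a body approaches $q_{i_0}$, so these identities are useless on their own: one must feed in the a priori lower bound $r_{kl} \ge \delta$ for mutual distances in a normalized CC — the companion result from \cite{MZ} (which in a self-contained account should precede this theorem). With $\delta$ of the order of $M^{1/3}(n-1)^{-1/2}$ one gets $\rho \le \sum_{j\ne i_0} m_j/\delta^2 \le M/\delta^2 = (n-1)M^{1/3}$. For $n\ge4$ the two endpoints are absorbed by the benign term $(m_{i_1}+m_{i_2})/D^2 \le M^{1/3}$ (we may assume $D \ge M^{1/3}$, else there is nothing to prove), and each of the other $n-2$ bodies satisfies $\max(r_{i_1 j}, r_{i_2 j}) \ge D/2$ by the triangle inequality $r_{i_1 j}+r_{i_2 j} \ge r_{i_1 i_2} = D$; bounding $g_j := \frac{m_j\alpha_j}{r_{i_1 j}^3} + \frac{m_j(D-\alpha_j)}{r_{i_2 j}^3}$ by combining the ``$\ge D/2$'' bound on the far endpoint with the lower bound $\delta$ on the near one, summing over the $n-2$ bodies, and optimizing the cutoff between these two regimes, I expect $(n-2)^{2/3}$ to emerge and — tracking the constants inside $\delta$ — the factor $2^{1/3}+2^{-2/3}$. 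Everything except the distance lemma is Cauchy--Schwarz and bookkeeping; the distance lemma is the genuine difficulty, and it cannot be obtained from one-body estimates alone, since a lone near-collision produces an unbounded term in its own CC equations that is only controlled by comparing the cluster with a central configuration of the corresponding sub-potential.
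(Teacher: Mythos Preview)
Your argument has a genuine gap at the step where you invoke ``the a priori lower bound $r_{kl}\ge\delta$'' with $\delta$ of order $M^{1/3}(n-1)^{-1/2}$. The companion result from \cite{MZ} (restated here as Theorem~\ref{thm:ncc-lbnd}) gives only
\[
r_{ij} > \frac{m_i m_j}{M R^2},
\]
which (i) depends on the individual masses $m_i,m_j$, not just on $M$, and (ii) depends on $R$, the very quantity you are trying to bound. No mass-independent lower bound of the form you need is available: if some $m_j$ is tiny, $p_j$ can sit arbitrarily close to $q_{i_0}$, and your sum $\sum_{j\ne i_0} m_j/r_{i_0 j}^2$ is then uncontrolled. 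The same objection kills the refined $n\ge4$ argument, since bounding $g_j$ on the ``near'' side again requires a uniform $\delta$.

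The paper's route (spelled out here for the anisotropic version in Appendix~\ref{upp-thm-proof}, which specializes to the isotropic case with $a=b=1$) sidesteps this entirely. One introduces a free scale $\varepsilon\in(0,R/(n-1))$ and builds the $\varepsilon$-connected cluster $\mathcal{C}$ around the extreme body $q_{i_0}$. Summing the CC equations over $i\in\mathcal{C}$, all \emph{internal} pairwise forces cancel by Newton's third law --- this is what replaces your missing lower bound on near distances --- while every body outside $\mathcal{C}$ is at distance $\ge\varepsilon$ from every body inside, so the external contribution is $\le M/\varepsilon^2$. Since the cluster lies in the half-space $\{x>0\}$ it cannot be everything (center of mass), and one gets
\[
R-(n-2)\varepsilon < \frac{M}{\varepsilon^2}\qquad\text{for all }\varepsilon<\frac{R}{n-1}.
\]
Both bounds in the statement then drop out by minimizing $g(\varepsilon)=\max\big((n-1)\varepsilon,\ (n-2)\varepsilon+M/\varepsilon^2\big)$: the crossover is at $\varepsilon=M^{1/3}$, giving $(n-1)M^{1/3}$, and for $n\ge4$ the interior minimum of $(n-2)\varepsilon+M/\varepsilon^2$ at $\varepsilon_0=(2M/(n-2))^{1/3}$ gives the constant $2^{1/3}+2^{-2/3}$. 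No distance lower bound is used anywhere.
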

In \cite{MZ} this theorem was stated for $M=1$, but a simple scaling argument gives factor $M^{1/3}$.

\begin{theorem}\cite[Thm. 6]{MZ}
  \label{thm:ncc-lbnd}
  Given positive masses $m_i$, $i=1,\dots,n$. Let $M=\sum_i m_i$.
  Assume that a normalized $\cc{n}$ satisfies $|q_i| \leqslant  R$ for $i=1,\dots,n$. Then
\begin{equation}
   r_{ij} > \frac{m_i m_j}{M R^2}, \quad 1\leqslant  i < j \leqslant  n. \label{eq:lb-dist}
\end{equation}
\end{theorem}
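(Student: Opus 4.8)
The plan is to estimate a single distance $r_{ij}$ from below by isolating the corresponding term in the central configuration equation and bounding everything else. Fix indices $i\neq j$. Starting from the normalized CC equation in the form $m_i q_i = f_i(q_1,\dots,q_n) = \sum_{\ell\neq i} \frac{m_i m_\ell}{r_{i\ell}^3}(q_\ell - q_i)$, I would split off the $\ell=j$ term and write
\begin{equation*}
  \frac{m_i m_j}{r_{ij}^3}(q_i - q_j) = -m_i q_i + \sum_{\ell\neq i,\ell\neq j} \frac{m_i m_\ell}{r_{i\ell}^3}(q_\ell - q_i).
\end{equation*}
Taking norms on the left gives $\frac{m_i m_j}{r_{ij}^2}$ (since $|q_i-q_j| = r_{ij}$), so it suffices to get an \emph{upper} bound on the norm of the right-hand side. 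The first contribution is $m_i|q_i|\leqslant m_i R$. For the remaining sum I would use the triangle inequality $|q_\ell - q_i|\leqslant |q_\ell| + |q_i|\leqslant 2R$, but this alone is not enough because the $r_{i\ell}^{-3}$ factors are not controlled — so instead the key trick is to compare $\frac{1}{r_{i\ell}^3}|q_\ell - q_i| = \frac{1}{r_{i\ell}^2}$ and then bound $\frac{1}{r_{i\ell}^2}$ crudely, which circularly reintroduces exactly the quantity we want to bound.

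The cleaner route, which I expect is the intended one, is to go back to the \emph{potential} formulation and use that a normalized CC is a critical point, or more directly to use a global identity. Actually the slickest argument: sum the magnitudes differently. From $m_i q_i = \sum_{\ell \neq i}\frac{m_i m_\ell}{r_{i\ell}^3}(q_\ell - q_i)$, dot both sides with a unit vector, or better, observe that each term $\frac{m_i m_\ell}{r_{i\ell}^3}(q_\ell - q_i)$ has magnitude $\frac{m_i m_\ell}{r_{i\ell}^2}$. Hence by the triangle inequality applied to the original equation,
\begin{equation*}
  \sum_{\ell\neq i}\frac{m_i m_\ell}{r_{i\ell}^2} \geqslant \Bigl|\sum_{\ell\neq i}\frac{m_i m_\ell}{r_{i\ell}^3}(q_\ell-q_i)\Bigr| = m_i|q_i|,
\end{equation*}
which is the wrong direction. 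The right direction comes from: the single term $\frac{m_i m_j}{r_{ij}^2}$ is at most the norm of $m_i q_i$ plus the norms of all the \emph{other} terms, i.e. $\frac{m_i m_j}{r_{ij}^2} \leqslant m_i R + \sum_{\ell\neq i,j}\frac{m_i m_\ell}{r_{i\ell}^2}$. To close this I would need $\frac{m_i m_\ell}{r_{i\ell}^2} \leqslant$ something summable; but note $\frac{m_i m_\ell}{r_{i\ell}^2} = \frac{1}{r_{i\ell}}\cdot \frac{m_i m_\ell}{r_{i\ell}}$ and $\frac{m_i m_\ell}{r_{i\ell}} \leqslant U_N \leqslant$ (bounded by the size constraint via Theorem~\ref{thm:ncc-upperBound})... this is getting complicated, so the genuinely simple argument must be the following.

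\textbf{The simple argument.} Consider the potential $V$ from~(\ref{eq:cc-pot}) (with $\lambda=1$), of which $q$ is a critical point. Compare the value $V(q)$ with the value of $V$ at a nearby configuration where bodies $i$ and $j$ are pulled apart slightly — no, even simpler: just use that at a critical point the gradient vanishes and examine the $i$-th component directly as above, but bound the nuisance sum using $r_{i\ell} \geqslant r_{ij}$ is false in general. Therefore the actual intended proof is surely: from $m_i|q_i| = |f_i(q)|$ and $|q_i|\leqslant R$ we get $|f_i(q)|\leqslant m_i R$, hence $\bigl|\frac{m_i m_j}{r_{ij}^3}(q_i-q_j)\bigr| \leqslant |f_i(q)| + \sum_{\ell\neq i,j}\bigl|\frac{m_i m_\ell}{r_{i\ell}^3}(q_i-q_\ell)\bigr|$, and now one bounds each remaining term by observing it is one of the summands whose total... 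The honest resolution, and what I would write: rearrange to get $\frac{m_j}{r_{ij}^2} \le R + \sum_{\ell \ne i,j} \frac{m_\ell}{r_{i\ell}^2}$, note that $\sum_{\ell} \frac{m_\ell}{r_{i\ell}^2} \le \frac{1}{r_{ij}^2}\sum_{\ell} m_\ell$ is false — so one must instead prove the bound for the \emph{minimal} distance $r^* = \min_{ab} r_{ab}$ first: if $r^* = r_{ij}$ then $r_{i\ell}\geqslant r^* = r_{ij}$ for all $\ell$, giving $\frac{m_j}{r_{ij}^2} \leqslant R + \frac{1}{r_{ij}^2}\sum_{\ell\neq i,j} m_\ell$, i.e. $m_j \leqslant R\, r_{ij}^2 + (M - m_i - m_j)$, which does not obviously give~(\ref{eq:lb-dist}) either unless $m_j > M - m_i - m_j$. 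So the main obstacle is genuinely this closing estimate; I would resolve it by: among all pairs choose the one minimizing $r_{ij}$, apply the equation for whichever of $i,j$ has the property, and push through the arithmetic — and more robustly, since the statement is a strict inequality $r_{ij} > \frac{m_i m_j}{MR^2}$ with a very small right-hand side, I would argue by contradiction: assume $r_{ij} \leqslant \frac{m_i m_j}{MR^2}$ for the \emph{closest} pair, then $\frac{m_i m_j}{r_{ij}^2} \geqslant \frac{m_i m_j (MR^2)^2}{(m_i m_j)^2}\cdot\frac{1}{1} $ is enormous while the right-hand side $m_i R + \sum_{\ell\neq i,j}\frac{m_i m_\ell}{r_{i\ell}^2} \leqslant m_i R + \frac{m_i(M-m_i-m_j)}{r_{ij}^2}$ has its dominant term $\frac{m_i(M-m_i-m_j)}{r_{ij}^2} < \frac{m_i m_j}{r_{ij}^2}$ only if $M - m_i - m_j < m_j$, which fails in general — hence the \emph{true} intended proof must avoid the "closest pair" idea and instead treat $r_{ij}$ directly with a smarter bound on the tail sum, presumably using Theorem~\ref{thm:ncc-upperBound} to bound $\sum_\ell \frac{m_i m_\ell}{r_{i\ell}^2}$ by something like $\sum_\ell \frac{m_i m_\ell}{r_{i\ell}} \cdot \frac{1}{\min r} $... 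In short: I would (1) isolate the $r_{ij}$ term in $f_i$, (2) bound $|f_i| \le m_i R$, (3) bound the tail, and I fully expect step (3) — controlling $\sum_{\ell \ne i,j} \frac{m_\ell}{r_{i\ell}^2}$ — to be the crux, to be handled either by induction on $n$ or by invoking the size bound of Theorem~\ref{thm:ncc-upperBound} together with the minimal-distance normalization, accepting that the resulting constant matches $\frac{m_i m_j}{MR^2}$ only after the dust settles.
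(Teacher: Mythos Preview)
Your approach has a genuine gap: you never manage to close step~(3), and in fact the componentwise force-balancing strategy you pursue cannot be made to work cleanly for this statement. You correctly identify the obstruction yourself --- controlling $\sum_{\ell\neq i,j}\frac{m_\ell}{r_{i\ell}^2}$ requires information you do not have, and neither the minimal-distance trick nor induction will produce exactly the constant $\frac{m_im_j}{MR^2}$.

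The paper's proof (given for the anisotropic analogue, Lemma~\ref{lm:lower-bnd}, in Appendix~\ref{lower-bound-proof}, and identical in spirit to the original in~\cite{MZ}) is a two-line argument via the global identity you mention in passing and then abandon. For a normalized CC one has, by dotting the $i$-th equation with $q_i$ and summing,
\[
  \sum_{i} m_i |q_i|^2 \;=\; \sum_{i<j}\frac{m_i m_j}{r_{ij}}.
\]
The left-hand side is at most $R^2\sum_i m_i = MR^2$ by hypothesis, while the right-hand side strictly exceeds any single term $\frac{m_im_j}{r_{ij}}$. Hence $MR^2 > \frac{m_im_j}{r_{ij}}$, which is~(\ref{eq:lb-dist}). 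No isolation of individual force terms, no tail estimate, no appeal to Theorem~\ref{thm:ncc-upperBound} is needed. The key idea you missed is that the \emph{sum} of all potential terms is controlled by the moment of inertia, so each individual term is automatically controlled.
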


\subsection{Study of the limit of small masses going to zero}
\label{subsec:msm}

To study central configurations with several masses going to zero  we need the notion of the restricted central configuration problem.

\begin{definition}
\label{def:rel-equi-restricted}
Let $\qnm=(q_1,q_2,\dots,q_N) \in \left(\mathbb{R}^2\right)^N$ with masses $m = (m_i)_{i = 1, \ldots, N}$.  {\em A potential $\vres$ for the restricted $(N+1)$-body problem induced by configuration $\qnm$} for $p \in \mathbb{R}^2$ is defined by
\begin{equation}
\vres(\qnm, p) = \frac{p^2}{2} + \sum_{i=1}^N \frac{m_i}{|p - q_i|},   \label{eq:Vres}
\end{equation}

Let $x^*$ be a solution of
\begin{equation}\label{eq:rel-equi-restricted}
\frac{\partial \vres}{\partial p}(\qnm,x^*)=0.
\end{equation}
Then $x^*$ is called a {\em relative normalized CC in the restricted $(N+1)$-body problem} for the configuration $\qnm$ --- in short $x^*\in \re{\qnm}$.

The solution $x^*$ is called {\em non-degenerate}, if $\frac{\partial^2 \vres}{\partial p^2} (\qnm,x^*)$ is an isomorphism\footnote{Everywhere we write $\frac{\partial \vres}{\partial q_i}(\qnm, y)$ or $\frac{\partial \vres}{\partial p}(\qnm, x)$, we have in mind  that the potential $\vres(\qnm, p)$ is a function of $N+1$ variables $(q_1, \ldots, q_n, p)$ and  it means the derivative with respect to variables $q_i$ or $p$, respectively, at the point $(\qnm, x)$.}.
\end{definition}

We are interested in the $(N+k)$-body problem, where there is  $N$ heavy  and $k$ light bodies.
 The meaning of the next theorem (Theorem~\ref{thm:limits}) is as follows:
in the central configuration the light bodies gather in several clusters around
$x^*\in \re{Q_N}$, i.e.\ solutions of~(\ref{eq:rel-equi-restricted})  (see Fig.~\ref{fig:cluster-ex}).

 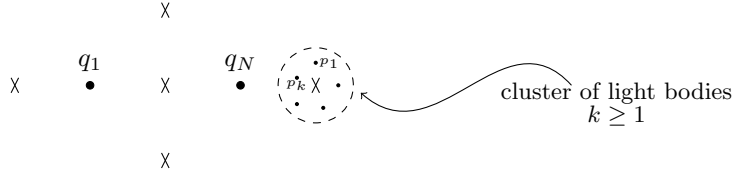
\begin{figure}[H]
  \centering
\begin{tikzpicture}
\node[] at (0.0, 1.3) {$q_1$};
\draw[fill] (0.0, 1.0) circle (0.05cm);

\draw[fill] (-0.95, 0.9) -- (-1.05, 1.1);
\draw[fill] (-0.95, 1.1) -- (-1.05, 0.9);
\draw[fill] (0.95, -0.1) -- (1.05, 0.1);
\draw[fill] (0.95, 0.1) -- (1.05, -0.1);
\draw[fill] (0.95, 0.9) -- (1.05, 1.1);
\draw[fill] (0.95, 1.1) -- (1.05, 0.9);
\draw[fill] (0.95, 1.9) -- (1.05, 2.1);
\draw[fill] (0.95, 2.1) -- (1.05, 1.9);
\draw[fill] (2.95, 0.9) -- (3.05, 1.1);
\draw[fill] (2.95, 1.1) -- (3.05, 0.9);

\node[] at (2.0, 1.3) {$q_N$};
\draw[fill] (2.0, 1.0) circle (0.05cm);

\draw[dashed] (3.0, 1.0) circle (5mm);

\draw[fill] (3.0, 1.3) circle (0.2mm);
\node[] at (3.18, 1.3) {\tiny $p_1$};

\draw[fill] (3.1, 0.7) circle (0.2mm);

\draw[fill] (2.75, 1.1) circle (0.2mm);
\node[] at (2.75, 1.0) {\tiny $p_k$};

\draw[fill] (3.3, 1.0) circle (0.2mm);

\draw[fill] (2.75, 0.75) circle (0.2mm);

\draw[->] (6.4, 1.0) .. controls (5.5, 2.0) and (4.6, 0.0) .. (3.6, 0.9);
\node[] at (7.0, 0.9) {\small cluster of light bodies};
\node[] at (7.0, 0.6) {\small $k\ge 1$};

\end{tikzpicture}
\captionof{figure}{The crosses indicate the possible positions of $x^*\in \re{\qnm}$.}\label{fig:cluster-ex}
\end{figure}

We use a parameter $\eps$ to describe the behaviour of masses and  positions with $\eps$  tending to zero .
 For the masses, we assume that $\mu_j \to 0$ for $j=1,\dots,k$ and the example we have in mind is $\mu_j=\eps \tilde{\mu}_j$ with $\eps \to 0$ and $\tilde{\mu}_j$ being some positive constants. For $m_i(\eps)$ we assume that they converge to some non-zero $\bbar{m}_i$.

For the positions, we consider a normalized central configuration $(Q_N(\eps),P_k(\eps))$ for the $(N+k)$-bodies with masses $m_i(\eps)$ and $\mu_j(\eps)$. From Theorem~\ref{thm:ncc-upperBound} it follows that we can find a subsequence of $\{\eps_s\}_{s \in \mathbb{N}}$ such that the $(Q_N(\eps_s),P_k(\eps_s))$ are converging to some limit. We want to investigate the nature of this limit.

\begin{theorem}
\label{thm:limits}
Let $(m_1(\eps),\dots,m_N(\eps),\mu_1(\eps),\dots,\mu_k(\eps))$ are positive masses depending on parameter $\eps$ (which might be discrete or continuous) and a sequence of normalized CC for these masses $(q_1(\eps),\dots,q_N(\eps),p_1(\eps),\dots,p_k(\eps))$, such that for $\eps \to 0$ holds
\begin{eqnarray}
  m_i(\eps) &\to& \bbar{m}_i >0, \quad i=1,\dots,N \\
  \mu_i(\eps) &\to& 0, \quad i=1,\dots,k, \\
  q_i(\eps) &\to& \bbar{q}_i, \quad p_j(\eps) \to \bbar{p}_j, \qquad i=1,\dots,N \quad j=1,\dots,k.
\end{eqnarray}
Then
\begin{enumerate}[label=(\Alph*)]
  \item $\bbar{Q}_N=(\bbar{q}_1,\dots,\bbar{q}_N)$ is a normalized CC for masses $(\bbar{m}_1,\dots,\bbar{m}_N)$,
  \item $\bbar{p}_j\in \re{\bbar{Q}_N}$ for $j = 1, \ldots, k$.
\end{enumerate}
\end{theorem}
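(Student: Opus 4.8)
The plan is to take the limit $\eps \to 0$ in the normalized central configuration equations, treating the heavy-body equations and the light-body equations separately because they scale differently in $\mu_j$.

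First I would write out the system for the normalized CC $(q_1(\eps),\dots,q_N(\eps),p_1(\eps),\dots,p_k(\eps))$ explicitly using (\ref{eq:cc-kart}) with $n = N+k$ and the mass vector $(m_1(\eps),\dots,m_N(\eps),\mu_1(\eps),\dots,\mu_k(\eps))$. For a heavy body $q_i$ the equation reads
\begin{equation*}
 q_i(\eps) = \sum_{\substack{l=1\\ l\neq i}}^N \frac{m_l(\eps)}{|q_i(\eps)-q_l(\eps)|^3}\bigl(q_i(\eps)-q_l(\eps)\bigr) + \sum_{j=1}^k \frac{\mu_j(\eps)}{|q_i(\eps)-p_j(\eps)|^3}\bigl(q_i(\eps)-p_j(\eps)\bigr).
\end{equation*}
To pass to the limit I need the denominators to stay bounded away from zero: Theorem~\ref{thm:ncc-upperBound} gives a uniform bound $R$ on $\max_i|q_i(\eps)|$ and $\max_j |p_j(\eps)|$ (the total mass is bounded since $m_i(\eps)\to\bbar m_i$ and $\mu_j(\eps)\to 0$), and then Theorem~\ref{thm:ncc-lbnd} gives $r_{il} = |q_i(\eps)-q_l(\eps)| > \bbar m_i \bbar m_l/(2MR^2) > 0$ for all small $\eps$, so the heavy-heavy distances are uniformly bounded below. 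Hence in the heavy-body equation the first sum converges to the corresponding sum with $\bbar m_l$ and $\bbar q_l$, while the second sum is $O\bigl(\sum_j \mu_j(\eps)\bigr) \to 0$ because each term $\mu_j(\eps)/|q_i(\eps)-p_j(\eps)|^3 \cdot |q_i(\eps)-p_j(\eps)|$ is dominated by $\mu_j(\eps)/|q_i(\eps)-p_j(\eps)|^2$ and the heavy-light distances are also bounded below by Theorem~\ref{thm:ncc-lbnd} applied with the pair $(m_i,\mu_j)$ — wait, that bound degenerates as $\mu_j\to 0$, so instead I note only that $|q_i(\eps)-p_j(\eps)| \le 2R$ and, more carefully, that the limit positions $\bbar q_i$ and $\bbar p_j$ might coincide; but the factor $(q_i-p_j)/|q_i-p_j|$ has modulus one, so the term is bounded by $\mu_j(\eps)/|q_i(\eps)-p_j(\eps)|^2$, which could blow up. This is the main obstacle and I address it below. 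Granting it, taking $\eps\to 0$ yields $\bbar q_i = \sum_{l\neq i} \bbar m_l |\bbar q_i - \bbar q_l|^{-3}(\bbar q_i - \bbar q_l)$, i.e. (\ref{eq:cc-kart}) for $\bbar Q_N$ with masses $\bbar m_l$, which is statement (A) — one should also check $\bbar Q_N$ is still "normalized" in the sense that $\sum \bbar m_i \bbar q_i = 0$, which follows by taking the limit of the center-of-mass identity (\ref{eq:cc-cofmass-l}).

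For part (B), I would divide the light-body equation for $p_j(\eps)$ by $\mu_j(\eps)$ before passing to the limit:
\begin{equation*}
 p_j(\eps) = \sum_{i=1}^N \frac{m_i(\eps)}{|p_j(\eps)-q_i(\eps)|^3}\bigl(p_j(\eps)-q_i(\eps)\bigr) + \sum_{\substack{l=1\\ l\neq j}}^k \frac{\mu_l(\eps)}{|p_j(\eps)-p_l(\eps)|^3}\bigl(p_j(\eps)-p_l(\eps)\bigr).
\end{equation*}
The light-light sum is again $O(\sum_l \mu_l(\eps)) \to 0$ provided the relevant distances behave, and in the limit we get $\bbar p_j = \sum_{i=1}^N \bbar m_i |\bbar p_j - \bbar q_i|^{-3}(\bbar p_j - \bbar q_i)$, which is exactly $\frac{\partial \vres}{\partial p}(\bbar Q_N,\bbar p_j) = 0$ from (\ref{eq:Vres})–(\ref{eq:rel-equi-restricted}) after computing $\frac{\partial \vres}{\partial p} = p - \sum_i m_i(p-q_i)/|p-q_i|^3$; hence $\bbar p_j \in \re{\bbar Q_N}$. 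For this to make sense I need $\bbar p_j \ne \bbar q_i$ for all $i$, i.e. a light body does not collide with a heavy one in the limit.

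The main obstacle, then, is controlling the heavy-light and light-light distances from below uniformly in $\eps$, i.e. ruling out that a light body $p_j(\eps)$ approaches a heavy body $q_i(\eps)$ (or that two light bodies approach each other faster than the masses shrink in a way that makes the $\mu_l/r^2$ terms blow up). I would handle the heavy-light collisions by a separate a priori argument: if $|p_j(\eps)-q_i(\eps)| \to 0$, then in the $q_i$-equation the term $\mu_j(\eps)|q_i(\eps)-p_j(\eps)|^{-2}$ stays bounded only if $\mu_j(\eps) = O(|p_j(\eps)-q_i(\eps)|^2)$; but then look at the $p_j$-equation divided by $\mu_j(\eps)$ — the heavy-body term $m_i(\eps)|p_j(\eps)-q_i(\eps)|^{-2}$ blows up like $|p_j(\eps)-q_i(\eps)|^{-2}$ while the left side $p_j(\eps)$ stays bounded, and the only way to cancel it is another light body $p_l(\eps)$ colliding with $p_j(\eps)$ at a comparable rate; chasing this one sees the whole cluster would have to collapse onto $q_i$, and then the balance of forces within the cluster (each pair contributing $\mu_l/r_{jl}^2 \ge$ the corresponding lower bound of Theorem~\ref{thm:ncc-lbnd}) leads to a contradiction with the total momentum being finite — concretely, summing the $p_j$-equations over the colliding cluster and using (\ref{eq:n-mom-con})-type cancellation of the internal light-light forces leaves $\sum_{j\in\text{cluster}} \mu_j(\eps) \bigl(\text{bounded}\bigr) = \sum_{j\in\text{cluster}}\sum_i m_i(\eps)(p_j-q_i)/|p_j-q_i|^3 \cdot \mu_j(\eps)$, and dividing by $\max_j\mu_j(\eps)$ forces the right-hand singular sum to be bounded, contradicting collision with $q_i$. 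I expect the cleanest route is actually to first establish, as a lemma, that along the sequence $\min_{i,j}|q_i(\eps)-p_j(\eps)|$ is bounded below, perhaps already available from the bounds in Section~\ref{subsec:apriori-bnds} or an easy variant, and then the two limit passages above are routine.
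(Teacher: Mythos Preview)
Your overall strategy---write out the CC equations, separate heavy from light, and pass to the limit---is the same as the paper's, and your identification of the heavy--light collision as the main obstacle is correct. But there are two genuine gaps.

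\textbf{Part (B) fails when several light bodies share a limit point.} You divide the $p_j$-equation by $\mu_j(\eps)$ and hope the light--light sum $\sum_{l\neq j}\mu_l(\eps)|p_j-p_l|^{-3}(p_j-p_l)$ vanishes in the limit. But nothing prevents $\bbar p_j=\bbar p_l$ for some $l\neq j$; in that case $|p_j(\eps)-p_l(\eps)|\to 0$ and there is no reason for $\mu_l(\eps)/|p_j(\eps)-p_l(\eps)|^2$ to stay bounded, let alone vanish. The paper fixes this by grouping the light bodies into \emph{pleiads} (maximal sets with a common limit point), summing the light-body equations over a pleiad weighted by $\mu_j$, and dividing by the pleiad's total mass $\Theta(\eps)=\sum_{j\in\C_s}\mu_j(\eps)$. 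The intra-pleiad light--light forces then cancel exactly by Newton's third law, and what remains is the center of mass $c(\eps)$ of the pleiad satisfying an equation whose limit is precisely $\frac{\partial\vres}{\partial p}(\bbar Q_N,p^*)=0$. Your individual-equation approach works only for singleton pleiads.

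\textbf{Your collision argument has a cancellation loophole.} Summing the light-body equations over the cluster colliding with $q_{i_0}$ and dividing by $\Theta$ (or $\max_j\mu_j$) leaves, on the right, a convex combination $\sum_{j}(\mu_j/\Theta)\,m_{i_0}(p_j-q_{i_0})/|p_j-q_{i_0}|^3$. Each summand has magnitude $\to\infty$, but the \emph{directions} $(p_j-q_{i_0})/|p_j-q_{i_0}|$ need not align, so the sum could remain bounded by cancellation (think of two equal light masses on opposite sides of $q_{i_0}$). The paper avoids this by a different device: within the offending pleiad, pick the light body $p_{j_0}$ \emph{furthest} from $q_{i_0}$, rotate so that $p_{j_0}-q_{i_0}$ lies along the $x$-axis, and look only at the $x$-component of the $p_{j_0}$-equation. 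Then the force from $q_{i_0}$ and the forces from every other light body in the pleiad all have the same sign in that component (they are all on the $q_{i_0}$ side of $p_{j_0}$ in $x$), so there is no cancellation and the dominant term $m_{i_0}/r^2$ really does blow up, contradicting boundedness of the other side. This ``extremal body plus sign coherence'' trick is the missing idea.
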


\proof
First define {\em a pleiad  $\C$} (of light bodies) --- a set of indices of light bodies such that all bodies in the pleiad have the same limit point when $\eps$ tends to 0.

Note that a pleiad  may contain one or several light bodies.
For the pleiad  $\C$ by $\bbar{p}(\C)$ we denote the corresponding limit point.
Let us divide all light bodies into $\ncl$ pleiades $\C_i$, with $i=1,\dots,\ncl$.

Since limit points of different pleiades are different and there is a finite number $\ncl$ of these points, thus we can take $\delta_\C >0$ such that  holds
\begin{equation}
  |\bbar{p}(\C_s) - \bbar{p}(\C_t) | > \delta_\C  \label{eq:deltaC}
\end{equation}
for all $s, t=1,\dots,\ncl$, $s\neq t$.
Let us set
\begin{eqnarray}
  M(\eps)&=&\sum_{i=1}^N m_i(\eps) + \sum_{j=1}^k \mu_j(\eps), \\
  \overline{M}&=&\lim_{\eps \to 0} M(\eps)= \sum_{i=1}^N \lim_{\eps \to 0} m_i(\eps) = \sum_{i=1}^N \bbar{m}_i.
\end{eqnarray}

Justification of the below facts constitutes the main  part of the proof:
\begin{lemma}\label{lm:limit-heavy}
Under the assumptions of Theorem~\ref{thm:limits}, the limit configuration of heavy bodies does not contain any collision (i.e.\ $\exists \RH>0 \colon   |\bbar{q}_i - \bbar{q}_j |  \geq \RH, \quad  |q_i(\eps) - q_j(\eps) |  \geq \RH$ for  $i,j=1,\dots,N, \ i \neq j,\ \eps \leq \eps_0$).
\end{lemma}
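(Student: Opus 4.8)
The plan is to show that the heavy bodies cannot collide in the limit by using the lower bound on inter-body distances (Theorem~\ref{thm:ncc-lbnd}) together with the uniform upper bound on the size of a normalized CC (Theorem~\ref{thm:ncc-upperBound}), exploiting the crucial fact that the masses $m_i(\eps)$ stay bounded away from zero while $M(\eps)$ stays bounded above. First I would observe that since $m_i(\eps)\to\bbar m_i>0$, there exist $\eps_0>0$ and a constant $m_{\min}>0$ with $m_i(\eps)\ge m_{\min}$ for all $i=1,\dots,N$ and all $\eps\le\eps_0$; similarly, since $\mu_j(\eps)\to 0$ and $m_i(\eps)\to\bbar m_i$, the total mass satisfies $M(\eps)\le \bbar M+1=:M_{\max}$ for $\eps$ small enough. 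Next, apply Theorem~\ref{thm:ncc-upperBound} to the $(N+k)$-body normalized CC $(q_1(\eps),\dots,q_N(\eps),p_1(\eps),\dots,p_k(\eps))$: with $n=N+k$ and total mass $M(\eps)\le M_{\max}$ we get a bound $|q_i(\eps)|\le R$ for all $i$, with $R$ depending only on $N,k,M_{\max}$ and not on $\eps$.

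Then I would invoke Theorem~\ref{thm:ncc-lbnd} with this same $R$: for the heavy-body pairs, since $m_i(\eps)m_j(\eps)\ge m_{\min}^2$, we obtain
\begin{equation*}
  r_{ij}(\eps)=|q_i(\eps)-q_j(\eps)| > \frac{m_i(\eps)m_j(\eps)}{M(\eps)R^2} \ge \frac{m_{\min}^2}{M_{\max}R^2}=:\RH>0
\end{equation*}
for all $1\le i<j\le N$ and all $\eps\le\eps_0$. Passing to the limit $\eps\to 0$ gives $|\bbar q_i-\bbar q_j|\ge \RH$ as well, which is exactly the claim (after possibly shrinking $\RH$ slightly or keeping the non-strict inequality in the limit).

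I do not anticipate a genuine obstacle here; the only point requiring a little care is that Theorems~\ref{thm:ncc-upperBound} and~\ref{thm:ncc-lbnd} must be applied to the \emph{full} $(N+k)$-body configuration (not just to $\bbar Q_N$, which is only known a posteriori to be a CC), so that all the relevant constants are genuinely uniform in $\eps$; the vanishing masses $\mu_j(\eps)$ are harmless because they only enter through $M(\eps)$, which stays bounded, and they never appear in a denominator of the heavy-pair estimate. One should also note that the light-body distances are \emph{not} bounded below by this argument — indeed the whole point of Theorem~\ref{thm:limits} is that light bodies may cluster — which is why the lemma is stated only for heavy bodies.
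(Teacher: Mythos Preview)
Your proposal is correct and follows essentially the same approach as the paper: apply Theorem~\ref{thm:ncc-upperBound} to the full $(N+k)$-body normalized CC to obtain a uniform size bound $R(\eps)$, then feed this into Theorem~\ref{thm:ncc-lbnd} for heavy-body pairs, where the masses $m_i(\eps)m_j(\eps)$ stay bounded below, yielding a uniform positive lower bound on $r_{ij}(\eps)$ that survives the limit $\eps\to 0$. The paper combines the two bounds directly into the explicit estimate $r_{ij}(\eps) > m_i(\eps)m_j(\eps)/\bigl(M(\eps)^{5/3}(N+k-1)^2\bigr)$ and then passes to the limit, whereas you introduce $m_{\min}$ and $M_{\max}$ first, but the argument is the same.
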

\begin{lemma}\label{lm:limit-light-heavy}
Under the assumptions of Theorem~\ref{thm:limits}, the light bodies cannot converge to any of the heavy bodies,(i.e.\ $\exists r > 0\ \forall i, j\colon |\bbar{q_i} - \bbar{p_j}|\ge r > 0$).
\end{lemma}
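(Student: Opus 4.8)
The plan is to get Lemma~\ref{lm:limit-heavy} for free from the two a~priori estimates, and then to prove Lemma~\ref{lm:limit-light-heavy} by rescaling a hypothetical cluster of light bodies sitting on a heavy body and testing the central configuration equation against the outermost body of the cluster.

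For Lemma~\ref{lm:limit-heavy}: Theorem~\ref{thm:ncc-upperBound} applied to the $(N+k)$-body normalized CC bounds $\max_i|q_i(\eps)|$ and $\max_j|p_j(\eps)|$ by a quantity depending only on $M(\eps)$ and $N+k$; since $M(\eps)\to\overline{M}$ this yields a uniform $R$ with $|q_i(\eps)|,|p_j(\eps)|\le R$ for $\eps\le\eps_0$. Theorem~\ref{thm:ncc-lbnd} then gives, for distinct heavy indices $i,j$, $r_{ij}(\eps)>m_i(\eps)m_j(\eps)/(M(\eps)R^2)$, whose right-hand side converges to $\bbar{m}_i\bbar{m}_j/(\overline{M}R^2)>0$; shrinking $\eps_0$ produces the desired uniform $\RH>0$, which also passes to the limit.

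For Lemma~\ref{lm:limit-light-heavy} I argue by contradiction, so suppose some pleiad $\C$ of light bodies has $\bbar{p}(\C)=\bbar{q}_i$ for a heavy index $i$. Throughout write $x_a,\nu_a$ for the position and mass of a body $a$, and put $S=\{i\}\cup\C$. All bodies indexed by $S$ converge to $\bbar{q}_i$, so $\sigma(\eps):=\diam\{x_a(\eps):a\in S\}\to 0$, and $\sigma(\eps)>0$ because a normalized CC has no collisions. Rescale about $q_i$: set $\xi_a(\eps)=(x_a(\eps)-q_i(\eps))/\sigma(\eps)$ for $a\in S$, so $\xi_i=0$, $|\xi_a|\le 1$ for all $a\in S$, and $\max_{a\in S}|\xi_a|\ge 1/2$ because $\diam\{\xi_a\}=1$. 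Choose $a^{*}=a^{*}(\eps)\in S$ realizing this maximum; then $a^{*}\in\C$, as $\xi_i=0$. Now write the normalized CC equation~(\ref{eq:cc-kart}) (with the $N+k$ bodies relabeled) for $a^{*}$ and split the sum into indices in $S$ and not in $S$, using $x_{a^{*}}-x_b=\sigma(\xi_{a^{*}}-\xi_b)$ for $b\in S$:
\begin{equation*}
x_{a^{*}}=\frac{1}{\sigma(\eps)^{2}}\sum_{b\in S,\,b\neq a^{*}}\nu_b\,\frac{\xi_{a^{*}}-\xi_b}{|\xi_{a^{*}}-\xi_b|^{3}}+\sum_{b\notin S}\frac{\nu_b}{|x_{a^{*}}-x_b|^{3}}\bigl(x_{a^{*}}-x_b\bigr).
\end{equation*}
Every body $b\notin S$ has limit point at distance at least $\delta:=\min(\RH,\delta_{\C})>0$ from $\bbar{q}_i$ (by Lemma~\ref{lm:limit-heavy} if $b$ is heavy, by~(\ref{eq:deltaC}) if $b$ is light and lies in another pleiad), so for small $\eps$ the last sum has norm at most $M(\eps)/(\delta/2)^{2}$, while $|x_{a^{*}}|\le R$. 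Rearranging and taking the inner product with $\xi_{a^{*}}$ gives
\begin{equation*}
\sum_{b\in S,\,b\neq a^{*}}\nu_b\,\frac{(\xi_{a^{*}}-\xi_b)\cdot\xi_{a^{*}}}{|\xi_{a^{*}}-\xi_b|^{3}}=\sigma(\eps)^{2}\Bigl(x_{a^{*}}-\sum_{b\notin S}\tfrac{\nu_b(x_{a^{*}}-x_b)}{|x_{a^{*}}-x_b|^{3}}\Bigr)\cdot\xi_{a^{*}},
\end{equation*}
and the right-hand side is bounded by $C\sigma(\eps)^{2}$ with $C$ independent of $\eps$. Since $|\xi_{a^{*}}|\ge|\xi_b|$ for all $b\in S$, each summand on the left is nonnegative, because $(\xi_{a^{*}}-\xi_b)\cdot\xi_{a^{*}}=|\xi_{a^{*}}|^{2}-\xi_b\cdot\xi_{a^{*}}\ge|\xi_{a^{*}}|(|\xi_{a^{*}}|-|\xi_b|)\ge 0$. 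In particular the $b=i$ term is at most $C\sigma(\eps)^{2}$; but it equals $m_i(\eps)/|\xi_{a^{*}}|\ge m_i(\eps)$, whence $m_i(\eps)\le C\sigma(\eps)^{2}\to 0$, contradicting $m_i(\eps)\to\bbar{m}_i>0$. Since this holds for every heavy index and every pleiad and there are finitely many of each, the limit points of light and heavy bodies are uniformly separated, which is the claim.

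The only \emph{genuine obstacle} is choosing the scale. A naive ``take the light body closest to $q_i$'' argument fails, because light bodies may cluster among themselves at scales far finer than their distance to $q_i$, so the internal light--light forces are not a priori controlled. Rescaling by $\sigma(\eps)=\diam S$ cures this: it makes all internal separations of order $1$, demotes the influence of the outside bodies to an $O(\sigma^{2})$ perturbation (this is where Lemma~\ref{lm:limit-heavy} and~(\ref{eq:deltaC}) enter), and then the classical device of pairing the reduced equation with the outermost point isolates the non-vanishing $1/|\xi_{a^{*}}|$ pull of the heavy mass $m_i$. Everything else — the uniform position bound and the bound on the outside forces — is routine from Theorems~\ref{thm:ncc-upperBound} and~\ref{thm:ncc-lbnd}.
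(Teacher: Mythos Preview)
Your argument is correct and follows essentially the same route as the paper's: pick the light body in the cluster farthest from the heavy body, project the central-configuration equation onto the radial direction from $q_i$ to that body so that all intra-cluster forces contribute with one sign, and observe that the pull of the single heavy mass $m_i(\eps)$ then cannot be balanced by the bounded exterior terms. Your rescaling by $\sigma(\eps)$ and the inner product with $\xi_{a^*}$ are a tidy coordinate-free packaging of exactly what the paper does by aligning the $x$-axis with $p_{j_0}-q_{i_0}$ and reading off the $x$-component, and your passage to the limit $\sigma(\eps)\to 0$ replaces the paper's explicit choice of a threshold $r$ in (\ref{eq:r-estm}); the substance is the same.
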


\proof (of Lemma~\ref{lm:limit-heavy})
First we show the existence of a lower bound for $r_{ij}(\eps)=|q_i(\eps) - q_j(\eps)|$,  $i,j=1,\dots,N$,  i.e.\  a lower bound for mutual distances between the heavy bodies.

   From Theorem~\ref{thm:ncc-lbnd} we have that the distance $r_{ij}(\eps)$ between any pair of  heavy bodies in CC has lower bound
\begin{equation*}
  r_{ij}(\eps) > \frac{m_i(\eps) m_j(\eps)}{M(\eps) R(\eps)^2},  
\end{equation*}
where  $R(\eps)=\max\left\{\max_{i=1,\dots,N} |q_i(\eps)|, \max_{j=1,\dots,k} |p_j(\eps)| \right\}$. From Theorem~\ref{thm:ncc-upperBound}  we know an upper bound on $R(\eps)$
 \begin{equation}
  R(\eps) \leqslant M(\eps)^{1/3}  \left\{
                                   \begin{array}{ll}
                                     N+k-1, & \hbox{$N+k\geqslant  2$;} \\
                                     \left(2^{1/3}+2^{-2/3} \right) (N+k-2)^{2/3}, & \hbox{$N+k\geqslant  4$}
                                   \end{array}
                                 \right.  \label{eq:R-eps-upbnd}
\end{equation}
hence
\begin{equation}
  r_{ij}(\eps) > \frac{m_i(\eps) m_j(\eps)}{M(\eps)^{5/3}(N+k-1)^2},  \label{eq:rij-eps-lbnd}
\end{equation}

Passing to the limit  $\eps \to 0$  in (\ref{eq:rij-eps-lbnd}) we obtain
\begin{equation}
  |\bbar{q}_i - \bbar{q}_j | \geq  \bbar{m}_i \bbar{m}_j\left(\bbar{M}^{5/3}(N+k-1)^2\right)^{-1}, \quad i,j=1,\dots,N, \ i\neq j.  \label{eq:lBndHb}
\end{equation}

This establishes the existence of $\RH > 0$ (subscript H is for heavy bodies ) and $\eps_0 >0$, such that
\begin{equation}
   |\bbar{q}_i - \bbar{q}_j |  \geq \RH, \quad  |q_i(\eps) - q_j(\eps) |  \geq \RH, \qquad  i,j=1,\dots,N, \ i \neq j,\ \eps \leq \eps_0. \label{eq:dist-h-h}
\end{equation}

This completes the proof of Lemma~\ref{lm:limit-heavy}.

\proof (of Lemma~\ref{lm:limit-light-heavy})
We are going to show that the light bodies cannot converge to any of the heavy bodies. That means  that
there exists $r>0$, such that for sufficiently small $\eps$
\begin{equation*}
  |q_i(\eps) - p_j(\eps)| \geq r, \qquad i=1,\dots,N, \quad j=1,\dots,\ncl.
\end{equation*}
We know that $\lim_{\eps\to 0}q_i(\eps) = \bbar{q}_i$ and $\lim_{\eps\to 0}p_j(\eps) = \bbar{p}(\C_j)$, thus in fact we show that there exists $r>0$, such that
\begin{equation}
  |\bbar{q}_i - \bbar{p}(\C_j)| \geq r, \qquad i=1,\dots,N, \quad j=1,\dots,\ncl.  \label{eq:lim-dist-h-l}
\end{equation}

Since all light bodies in a pleiad converge to the same limit point, from~(\ref{eq:deltaC}) it follows that there exists
$\eps_1>0$  such  that for all $s, t=1,\dots,\ncl$, $s \neq t$ holds
\begin{equation}
  |p_j(\eps) - p_l(\eps) | \geq \delta_\C,   \quad j \in \C_s , l \in \C_t, \quad \eps < \eps_1,  \label{eq:dist-l-l}
\end{equation}
i.e.\ we have a uniform positive lower bound for the distance between the light bodies in different pleiades.

By contradiction, assume that for some $i_0$ and some pleiad $\C_s$ we have
\begin{equation}\label{eqn:contr}
  |\bbar{q}_{i_0}- \bbar{p}(\C_s)| = 0.
\end{equation}
Let us take
\begin{equation}
 r < \min\left\{\RH\sqrt{m_{\min}\left(\RH^2\bbar{M}^{2/3}(N+k-1) +   4\bbar{M}\right)^{-1}}, \frac{1}{2}\RH\right\},\label{eq:r-estm}
 \end{equation}
 where $m_{\min} = \min\{m_{i}: i = 1, \ldots, N\}$.

For sufficiently small $\eps$ holds
\begin{equation}
  |q_{i_0}(\eps) - p_j(\eps)| \leq r, \quad \forall j \in \C_s.  \label{eq:dist-qi0-clu}
\end{equation}

Let us fix  $\eps < \min\{\eps_0, \eps_1\}$ satisfying~(\ref{eq:dist-qi0-clu}). Let $j_0 \in \C_s$ be such that
\begin{equation}
  |p_{j_0}(\eps)-q_{i_0}(\eps)| \geq |p_j(\eps)-q_{i_0}(\eps)|, \quad j\in \C_s
\end{equation}
i.e.\ $p_{j_0}(\eps)$ is the point in the pleiad $\C_s$ which is  furthest away from $q_{i_0}(\eps)$.

Consider now the direction connecting $p_{j_0}(\eps)$ with $q_{i_0}(\eps)$. Without loss of the generality, we can assume that  the direction is the $x$-coordinate direction, i.e.\ $y(p_{j_0}(\eps))=y(q_{i_0}(\eps))$, where by $x(p)$, $y(p)$ denote $x$ and $y$ components of $p\in\reals^2$, respectively.
In such situation the $x$-component of forces acting on $p_{j_0}(\eps)$ coming from the heavy body $i_0$ and all other light bodies in the pleiad $\C_s$ have the same sign (see Fig.~\ref{fig:pleiad}).

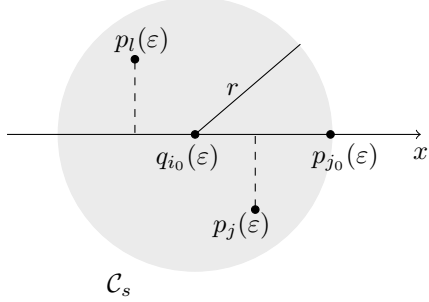
\begin{figure}[H]
 \centering
  \begin{tikzpicture}

\draw[fill, lightGrey] (1.0, 2.0) circle (1.82cm);
\node[] at (0.0, 0.0) {$\mathcal{C}_s$};

\draw[->, thin] (-1.5, 2.0) -- (4.0, 2.0);
\node[] at (4.0, 1.75) {$x$};

\draw[fill] (1.0, 2.0) circle (0.05cm);
\node[] at (0.9, 1.7) {$q_{i_0}(\eps)$};

\draw[thin] (1.0, 2.0) -- (2.4, 3.2);
\node[] at (1.5, 2.6) {$r$};

\draw[fill] (2.8, 2.0) circle (0.05cm);
\node[] at (3.0, 1.7) {$p_{j_0}(\eps)$};

\draw[fill] (1.8, 1.0) circle (0.05cm);
\node[] at (1.62, 0.78) {$p_j(\eps)$};
\draw[dashed] (1.8, 1.03) -- (1.8, 2.0);

\draw[fill] (0.2, 3.0) circle (0.05cm);
\node[] at (0.3, 3.25) {$p_l(\eps)$};
\draw[dashed] (0.2, 2.97) -- (0.2, 2.0);

\end{tikzpicture}
\caption{The $x$-component of forces acting on $p_{j_0}(\eps)$ coming from the heavy body $i_0$ and all other light bodies $p$ in the pleiad $\mathcal{C}_s$ have the same sign; pleiades of light bodies are symbolically visualized by grey circles.}\label{fig:pleiad}
\end{figure}

Let us rewrite (\ref{eq:cc}) with $\lambda=1$ for  $x$-component of the equation corresponding $p_{j_0}$-th body. We  group terms to highlight which terms are dominating and we take an absolute value of both sides:

\begin{eqnarray}
   &  & \left|\frac{m_{i_0}(\eps)(x(p_{j_0}(\eps))-x(q_{i_0}(\eps)))}{|p_{j_0}(\eps)-q_{i_0}(\eps)|^3} + \sum_{j \in \C_s, j \neq j_0}  \frac{\mu_{j}(\eps)(x(p_{j_0}(\eps))-x(p_j(\eps)))}{|p_{j_0}(\eps)-p_j(\eps)|^3} \right|  \nonumber \\
   & = &\left|\sum_{i \neq i_0,i=1}^N \frac{m_{i}(\eps)(x(p_{j_0})-x(q_{i}(\eps)))}{|p_{j_0}(\eps)-q_{i}(\eps)|^3} + \sum_{j \notin \C_s}  \frac{\mu_{j}(\eps)(x(p_{j_0})-x(p_j(\eps)))}{|p_{j_0}(\eps)-p_j(\eps)|^3} - x(p_{j_0}(\eps))\right|  \label{eq:ncc-px-comp}
\end{eqnarray}
We show that, if $r$ is small enough,  lhs of~(\ref{eq:ncc-px-comp}) dominates rhs of~(\ref{eq:ncc-px-comp}).
First, observe that the inequalities~(\ref{eqn:a})--(\ref{eqn:c}) are true (we precede each equation with a short proof).

Since our choice of $j_0$ is such that the expressions $x(p_{j_0}(\eps))-x(q_{i_0}(\eps))$  and $x(p_{j_0}(\eps))-x(p_{j}(\eps))$ for $j \in \C_s$ have the same sign (see Fig.~\ref{fig:pleiad}) we obtain
\begin{eqnarray*}
 {\rm lhs(\ref{eq:ncc-px-comp})} & \geq  & \left|\frac{m_{i_0}(\eps)(x(p_{j_0}(\eps))-x(q_{i_0}(\eps)))}{|p_{j_0}(\eps)-q_{i_0}(\eps)|^3} \right| = \frac{m_{i_0}(\eps)}{|p_{j_0}(\eps)-q_{i_0}(\eps)|^2} \geq  \frac{m_{i_0}(\eps)}{r^2}
\end{eqnarray*}
hence
\begin{equation} \label{eqn:a}
\left| \frac{m_{i_0}(\eps)(x(p_{j_0}(\eps))-x(q_{i_0}(\eps)))}{|p_{j_0}(\eps)-q_{i_0}(\eps)|^3} + \sum_{j \in \C_s, j \neq j_0}  \frac{\mu_{j}(\eps)(x(p_{j_0}(\eps))-x(p_j(\eps)))}{|p_{j_0}(\eps)-p_j(\eps)|^3} \right| \geq \frac{m_{i_0}(\eps)}{r^2}
\end{equation}

By Lemma~\ref{lm:limit-heavy}, conditions (\ref{eq:dist-qi0-clu}), (\ref{eq:r-estm})) and the triangle inequality there is
\begin{eqnarray*}
  \left|  \sum_{i \neq i_0,i=1}^N \frac{m_{i}(\eps)(x(p_{j_0})-x(q_{i}(\eps)))}{|p_{j_0}(\eps)-q_{i}(\eps)|^3}\right|
   & \leq &
    \sum_{i \neq i_0,i=1}^N \frac{m_{i}(\eps)}{|p_{j_0}(\eps)-q_{i}(\eps)|^2}   \\
 & \leq  &
    \sum_{i \neq i_0,i=1}^N \frac{m_{i}(\eps)}{\left(|q_{i_0}(\eps)-q_{i}(\eps)| - |q_{i_0}(\eps)-p_{j_0}(\eps)| \right)^2} \\
    &  \leq &  \sum_{i \neq i_0,i=1}^N \frac{m_{i}(\eps)}{(\RH -  r)^2}
     \leq    \left(\sum_{i \neq i_0,i=1}^N m_{i}(\eps)\right) \frac{1}{(\RH/2)^2}\\
   & \leq &  \frac{4 M(\eps)}{\RH^2}
\end{eqnarray*}
hence
\begin{equation}\label{eqn:b}
\left|  \sum_{i \neq i_0,i=1}^N \frac{m_{i}(\eps)(x(p_{j_0})-x(q_{i}(\eps)))}{|p_{j_0}(\eps)-q_{i}(\eps)|^3}\right| \leq   \frac{4 M(\eps)}{\RH^2}
\end{equation}

By~(\ref{eq:dist-l-l}) we obtain\\
\begin{eqnarray*}
 \left|\sum_{j \notin \C_s}  \frac{\mu_{j}(\eps)(x(p_{j_0})-x(p_j(\eps)))}{|p_{j_0}(\eps)-p_j(\eps)|^3} \right|
 \leq \sum_{j \notin \C_s}  \frac{\mu_{j}(\eps)}{|p_{j_0}(\eps)-p_j(\eps)|^2} \leq \sum_{j \notin \C_s}  \frac{\mu_{j}(\eps)}{\delta_\C^2}
 \leq \left(\sum_{j \notin \C_s}  \mu_{j}(\eps)\right)\frac{1}{\delta_\C^2}
\end{eqnarray*}
hence
\begin{equation}\label{eqn:c}
 \left|\sum_{j \notin \C_s}  \frac{\mu_{j}(\eps)(x(p_{j_0})-x(p_j(\eps)))}{|p_{j_0}(\eps)-p_j(\eps)|^3} \right|
 \leq \left(\sum_{j \notin \C_s}  \mu_{j}(\eps)\right)\frac{1}{\delta_\C^2}
\end{equation}

Finally, from Theorem~\ref{thm:ncc-upperBound} we have
\begin{equation}\label{eqn:d}
  | x(p_{j_0}(\eps))| \leq  M^{1/3}(\eps) R(\eps)
\end{equation}

Notice that by (\ref{eqn:a})--(\ref{eqn:d}) we have
\begin{eqnarray*}
\frac{m_{i_0}(\eps)}{r^2}\leq \mbox{lhs(\ref{eq:ncc-px-comp}) = rhs(\ref{eq:ncc-px-comp})} & \leq &  M^{1/3}(\eps) R(\eps) +   \frac{4 M(\eps)}{\RH^2} + \left(\sum_{j \notin \C_s}  \mu_{j}(\eps)\right)\frac{1}{\delta_\C^2}
\end{eqnarray*}

hence
\begin{eqnarray*}
r^2 & \geq &  m_{i_0}(\eps)\left(M^{1/3}(\eps) R(\eps) +   \frac{4 M(\eps)}{\RH^2} + \left(\sum_{j \notin \C_s}  \mu_{j}(\eps)\right)\frac{1}{\delta_\C^2}\right)^{-1}.
\end{eqnarray*}

Passing to the limit $\eps \to 0$ we obtain
\begin{eqnarray*}
r^2 & \geq &  m_{\min}\left(\bbar{M}^{2/3}(N+k-1) +   \frac{4\bbar{M}}{\RH^2}\right)^{-1}\\
  & = &   m_{\min}\RH^2\left(\RH^2\bbar{M}^{2/3}(N+k-1) +   4\bbar{M}\right)^{-1}.
\end{eqnarray*}
Hence
\begin{equation}
r \ge \RH\sqrt{m_{\min}\left(\RH^2\bbar{M}^{2/3}(N+k-1) +   4\bbar{M}\right)^{-1}}.\label{eqn:contradiction}
\end{equation}
However, this contradicts~(\ref{eq:r-estm}), completing the proof of Lemma~\ref{lm:limit-light-heavy}.

(A)\
Now we prove that $(\bbar{q}_1,\dots,\bbar{q}_N)$ is a normalized CC for the $N$-body problem.

Let us rewrite (\ref{eq:cc}) for $i=1,\dots,N$ in the following way:
\begin{equation*}
  q_i(\eps)=\sum_{j=1, j \neq i}^N \frac{m_j(\eps)(q_i(\eps) - q_j(\eps))}{|q_i(\eps) - q_j(\eps)|^3} + \sum_{j=1}^k \frac{\mu_j(\eps)(q_i(\eps) - p_j(\eps))}{|q_i(\eps) - p_j(\eps)|^3}.
\end{equation*}

Lemmas~\ref{lm:limit-heavy} and~\ref{lm:limit-light-heavy} imply  that when $\eps$ tends to 0, denominators in the above equations are bounded from below by some positive
numbers. Hence we obtain

 \begin{equation*}
  \bbar{q}_i=\sum_{j=1, j \neq i}^N \frac{\bbar{m}_j(\bbar{q}_i - \bbar{q}_j)}{|\bbar{q}_i - \bbar{q}_j|^3},
\end{equation*}
showing that $\bbar{Q}_N$ is $\cc{N}$ for masses $\bbar{m}_1,\dots,\bbar{m}_N$. This completes the proof of part (A).

(B)
It remains to prove that $\bbar{p}_j\in \re{\bbar{Q}_N}$ for $j = 1, \ldots, k$.
Write equations  (\ref{eq:cc}) for the light bodies as follows
\begin{equation}
    \mu_j p_j(\eps)=\sum_{i=1}^N \frac{\mu_j m_i(\eps)(p_j(\eps) - q_i(\eps))}{|p_j(\eps) - q_i(\eps)|^3} + \sum_{i=1, i \neq j}^k \frac{\mu_i \mu_j(\eps)(p_j(\eps) - p_i(\eps))}{|q_j(\eps) - p_i(\eps)|^3}.  \label{eq:ncc-lb}
\end{equation}

Let us take a pleiad $\C_s$. Bodies of that pleiad converge to $p^*=\bbar{p}(\C_s)$, when $\eps$ tends to 0.
Notice that for  $\Theta(\eps) = \sum_{j \in \C_s} \mu_j(\eps)$
\begin{eqnarray*}
  c(\eps)&=&\frac{1}{\Theta(\eps)} \sum_{j \in \C_s} \mu_j(\eps) p_j(\eps)
\end{eqnarray*}
 is a center of mass of $\C_s$ and
$c(\eps) \to p^*.$
We add  equations (\ref{eq:ncc-lb}) for $j \in \C_s$  and divide by $\Theta(\eps)$ to obtain
\begin{eqnarray}
  c(\eps) & = & \frac{1}{\Theta(\eps)}\sum_{j\in \C_s} \sum_{i=1}^N \frac{\mu_j(\eps) m_i(\eps)(p_j(\eps) - q_i(\eps))}{|p_j(\eps) - q_i(\eps)|^3} + \frac{1}{\Theta(\eps)} \sum_{j\in \C_s} \sum_{\substack{i=1,\\ i \notin \C_s}}^k \frac{\mu_j(\eps) \mu_i(\eps)(p_j(\eps) - p_i(\eps))}{|p_i(\eps) - p_j(\eps)|^3}  \nonumber \\
     & = & \sum_{i=1}^N   m_i(\eps)\sum_{j\in \C_s}   \frac{\mu_j(\eps)}{\Theta(\eps)} \frac{p_j(\eps) - q_i(\eps)}{|p_j(\eps) - q_i(\eps)|^3}
       + \sum_{\substack{i=1,\\ i \notin \C_s}}^k \mu_i(\eps) \sum_{j\in \C_s} \frac{\mu_j(\eps)}{\Theta(\eps)}  \frac{ (p_j(\eps) - p_i(\eps))}{|p_i(\eps) - p_j(\eps)|^3} . \label{eq:cc-eq-cCc}
\end{eqnarray}

From Lemma~\ref{lm:limit-light-heavy} it follows that $p^* \neq \bbar{q}_i $ and
\begin{equation*}
   \frac{p_j(\eps) - q_i(\eps)}{|p_j(\eps) - q_i(\eps)|^3} \to \frac{p^* - \bbar{q}_i}{|p^* - \bbar{q}_i|^3},
\end{equation*}
and   since by (\ref{eq:deltaC}) $p^* \neq \bbar{p}_j$ for $j \notin \C_s$  we have
\begin{equation*}
  \frac{p_i(\eps) - p_j(\eps)}{|p_i(\eps) - p_j(\eps)|^3}  \to  \frac{ p^* - \bbar{p}_j}{|p^* - \bbar{p}_j|^3}, \quad i \in \C_s, j \notin \C_s
\end{equation*}

Taking this into account when passing to the limit $\eps \to 0$ in (\ref{eq:cc-eq-cCc}) we obtain
\begin{eqnarray*}
  p^* & = &
  \sum_{i=1}^N    \frac{\bbar{m}_i(p^* - \bbar{q}_i)}{|p^* - \bbar{q}_i|^3}.
\end{eqnarray*}
Therefore $p^*$ is  a relative normalized central configuration for the restricted $(N+1)$-problem with the configuration $\bbar{Q}_N$.

This completes the proof of part (B) and the proof of the whole theorem.
\qed

\subsection{The shape of the pleiads of light bodies}

To better understand the behavior of pleiad of light bodies in the limit of vanishing masses,
 we investigate in more details the potential $V$ given in~(\ref{eq:cc-pot}). First, we separate the interactions between bodies into three groups: heavy-heavy, heavy-light and light-light.
Thus we obtain
\begin{eqnarray*}
  V(Q_N,P_k)
  & = & \sum_{i=1}^N \frac{m_i q_i^2}{2} + \sum_{i = 1}^{N-1}\left( \sum_{j = i+ 1}^{N}\frac{m_i m_j}{|q_i - q_j|}\right)\\
  &  & +  \sum_{j=1}^{k} \frac{\mu_{j} p_{j}^2}{2} + \sum_{i = 1}^{N}\left( \sum_{j = 1}^{k}\frac{m_i \mu_j}{|q_i - p_j|}\right)\\
  &  &  +
    \sum_{i = 1}^{k-1}\left( \sum_{j = i+ 1}^{k}\frac{\mu_i \mu_j}{|p_i - p_j|}\right),
\end{eqnarray*}
where heavy bodies are $Q_N = (q_i)_{i = 1, \ldots, N}$ with masses $m = (m_i)_{i = 1, \ldots, N}$ (hereinafter referred to as $\qnm$) and light bodies are $P_k = (p_j)_{j = 1, \ldots, k}$ with masses $\mu = (\mu_j)_{j = 1, \ldots, k}$ (hereinafter referred to as $\pkm$).

Then (see Def.~\ref{def:rel-equi-restricted})
\begin{equation*}
  V(\qnm, \pkm)  =  V(\qnm) + \sum_{j=1}^k  \mu_j\vres(\qnm,p_j) +   \sum_{1\leq i<j\leq k} \frac{\mu_i \mu_j}{|p_i - p_j|}.
\end{equation*}
Let us denote
\begin{equation}
W(\qnm, \pkm) = \sum_{j=1}^k  \mu_j\vres(\qnm,p_j) +   \sum_{1\leq i<j\leq k} \frac{\mu_i \mu_j}{|p_i - p_j|}
\end{equation}
thus
\begin{equation*}
  V(\qnm, \pkm)  =  V(\qnm) + W(\qnm, \pkm) .
\end{equation*}

Now, equations~(\ref{eq:cc-DV}) for CC become
\begin{eqnarray}
\frac{\partial V}{\partial q_i} & = & \frac{\partial V}{\partial q_i}(\qnm) + \sum_{j=1}^k \mu_j \frac{\partial \vres}{\partial q_i}(\qnm, p_j)=0, \qquad \underline{i=1,\dots, N},  \label{eq:cc-big-masses} \\
\frac{\partial V}{\partial p_j} & = & \frac{\partial W}{\partial p_j}(\qnm, \pkm) = \mu_j \left(\frac{\partial \vres}{\partial p}(\qnm,p_j) + \sum_{\substack{1\leq i\leq k,\\ i \neq j} }\frac{\mu_i(p_j - p_i)}{|p_j - p_i|^3}\right)=0, \qquad \underline{j = 1, \ldots, k}.\label{eqn:cc-light}
\end{eqnarray}

Let us focus on (\ref{eqn:cc-light}). We fix a   $\qnm$. From Theorem~\ref{thm:limits} we expect that the light bodies will
form pleiades (clusters) close to points belonging to $\re{\qnm}$. Let us fix  $x^*\in \re{\qnm}$ and  consider a cluster  $\pkm = (p_1, \ldots, p_k)$ of $k$ light bodies around $x^*$, i.e. $p_j \approx x^*$ for $j=1,\dots,k$.
Since positions of heavy bodies are fixed, in the following part, we omit the argument $\qnm$ in $\vres$. We expand $\vres(p_i)$ into Taylor series at the point $x^*$ up to the second order term\footnote{We use notation
\begin{equation*}
(p_i - x^*)^T \frac{\partial^2}{\partial p^2}\vres(Q_N,x^*)(p_i - x^*)  = \frac{\partial^2}{\partial p^2}\vres(Q_N,x^*)(p_i - x^*)^2 =  \frac{\partial^2}{\partial p^2}\vres(Q_N,x^*)(p_i - x^*, p_i - x^*).
\end{equation*}
}
and by~(\ref{eq:rel-equi-restricted}) we obtain:
\begin{eqnarray}
  W(\qnm, \pkm) &=&  \sum_{j=1}^k \mu_j \left(\vres(x^*) + \frac{1}{2}\frac{\partial^2}{\partial p^2}\vres(x^*)(p_j - x^*)^2 + g(\qnm,x^*, p_j) \right) \nonumber\\
  & & + \sum_{1 \leq i < j \leq k} \frac{\mu_i \mu_j}{|p_i - p_j|},\label{eqn:pot-W}
\end{eqnarray}
where $g(\qnm, x^*, p_j)$ is a third order remainder term from the Taylor formula.
The function $g$ is trilinear in $x-p$, hence
\begin{equation*}
g(\qnm,x, p)=\bigo(|p - x|^3),
\end{equation*}
Moreover, we also have  $D_p g(\qnm, x, p) = \bigo(|p-x|^2)$ and $D_p^2 g(\qnm, x, p) = \bigo(|p-x|)$.

In order to study the limit of $\mu_i \to 0$, we scale the variables
\begin{subequations}
\begin{align}
  \Theta &= \sum_{j=1}^k \mu_j, \label{eq:SumM} \\
  \widetilde{\mu}_j & =  \frac{\mu_j}{\Theta},  \label{eq:m-tilde} \\
  \widetilde{p}_j & =  \frac{p_j - x^*}{\Theta^{1/3}}, \label{eq:q-norm}\\
  \widetilde{\C}_k&= (\widetilde{p}_1,\ldots, \widetilde{p}_k).
\end{align}
\label{eqn:scaling}
\end{subequations}
In the sequel we will use the inverse mapping of~(\ref{eq:q-norm}), which is a function of three parameters, thus 
\begin{equation}
p_j(\Theta,x, \widetilde{p}) = x + \Theta^{1/3}\widetilde{p_j}.  \label{eq:pj}
\end{equation}
In~(\ref{eqn:pot-W}) we use scaled values and we omit the constant term $\sum_{j=1}^k \mu_j \vres(x^*)$ (as it has no effect on the partial derivatives with respect to $p_j$ variables) and we obtain
\begin{eqnarray*}
 W(\qnm, \widetilde{P}_k^\mu) &=& \sum_{j=1}^k \Theta^{\frac{5}{3}} \frac{\widetilde{\mu}_j}{2} \frac{\partial^2}{\partial p^2}\vres(x^*)(\widetilde{p}_j, \widetilde{p}_j)
    + \sum_{j=1}^k \Theta^2\widetilde{\mu}_{j} g(\qnm,x^*, \widetilde{p}_j) \\
    & & + \sum_{1\leq i<j\leq k} \Theta^{\frac{5}{3}}\frac{\widetilde{\mu}_i \widetilde{\mu}_j}{|\widetilde{p}_i - \widetilde{p}_j|}\\
  & = & \Theta^{\frac{5}{3}}\left(  \widetilde{W}(x^*,\qnm, \widetilde{P}_k^\mu) + \sum_{j=1}^k \Theta^{\frac{1}{3}}\widetilde{\mu}_{j}  g(\qnm,x^*,\widetilde{p}_j)
\right),
\end{eqnarray*}

where
\begin{eqnarray}
\widetilde{W}(x^*,\qnm, \widetilde{P}_k^\mu) &=& \sum_{j=1}^k  \frac{\widetilde{\mu}_j}{2} \frac{\partial^2}{\partial p^2}\vres(\qnm,x^*)(\widetilde{p}_j,\widetilde{p}_j)  + \sum_{1\leq i<j\leq k}  \frac{\widetilde{\mu}_i \widetilde{\mu}_j}{|\widetilde{p}_i - \widetilde{p}_j|} \end{eqnarray}

We see that (\ref{eqn:cc-light}) becomes, for $j = 1, \ldots, k$

\begin{subequations}
\begin{align}
\frac{\partial \vres}{\partial p}(\qnm, x^*) & = 0\label{eqn:line-1} \\
 \frac{\partial }{\partial \widetilde{p}_j}\left( \widetilde{W}(x^*, \qnm, \widetilde{P}_k^\mu) +  \Theta^{\frac{1}{3}} \sum_{i=1}^k\widetilde{\mu}_{i}  g(\qnm, x^*, \widetilde{p}_i) \right) & =  0.
  \label{eq:cc-light-rescaled}
\end{align}
\label{eqn:all-lines}
\end{subequations}

\begin{definition}\label{def:cc-induced}
Let $\qnm = (q_1, \ldots, q_N)$ with masses $m = (m_i)_{i = 1, \ldots, N}$ and $x \in \reals^2$. Let $\mu = (\mu_j)_{j = 1, \ldots, k}$ be some constant positive real  numbers. The critical points of
\begin{eqnarray}
   \widetilde{W}(x,\qnm, \pkm) &=& \sum_{j=1}^k  \frac{\mu_j}{2} \frac{\partial^2\vres}{\partial p^2}(Q_N,x)(p_j,p_j)  + \sum_{1\leq i<j\leq k}  \frac{\mu_i \mu_j}{|p_i - p_j|} \label{eqn:new_coords}
\end{eqnarray}
 with fixed $x,\qnm$, i.e.\
 solutions $P_k = (p_j)_{j = 1, \ldots, k}$ of
 $$\frac{\partial \widetilde{W}}{\partial p_j}(x,\qnm, P_k^\mu)=0,\qquad \mbox{for } j=1,\dots,k$$
are called {\em CC of potential $\widetilde{W}$ induced by $(\qnm, x)$ for $\mu$}.
We write $P_k\in CC(\widetilde{W}, \qnm, x)$ for $\mu$.
\end{definition}

The following theorem is the main result in this section and the main motivation behind the notion of CC of potential $\widetilde{W}$ induced by $(\qnm, x)$. It can be seen as the generalization of the unstated explicitly  result from \cite{X}, where case $k=2$ was considered in Section 2.

\begin{theorem}
\label{thm:cont}
Assume that
\begin{itemize}
\item ${\hat Q}_N^m = (\hat q_1,\ldots, \hat q_N)$ is a non-degenerate normalized CC with masses $m = (m_1,\dots,m_N)$,
\item $x^*\in\re{{\hat Q}_N^m}$ and $x^*$ is non-degenerate,
\item ${\hat P}_k^{\widetilde{\mu}} = (\hat p_1,\ldots, \hat p_k)$  is  a non-degenerate central configuration of potential  $\widetilde{W}$  induced by $({\hat Q}_N^m, x^*)$ for some positive real numbers $\widetilde{\mu} = (\widetilde{\mu}_1,\ldots,  \widetilde{\mu}_k)$ and $\sum_i \widetilde{\mu}_i = 1$.

\end{itemize}
Then, for sufficiently small $\Theta>0$, there exists  a normalized non-degenerate central configuration: $$Q_{N+k} = (q_1,\dots, q_N, p_1, \ldots, p_k)$$
with masses $m_i$, for $i = 1, \ldots, N$ and $\mu_j = \Theta\widetilde{\mu}_j$, for $j = 1, \ldots, k$ such that
\begin{eqnarray*}
  q_i &=& \hat q_i + \bigo(\Theta),\\
  p_j &=& x^* + \Theta^{1/3}\hat p_j + \bigo(\Theta^{2/3}).
\end{eqnarray*}

\end{theorem}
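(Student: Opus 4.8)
The plan is to set up the full system of central-configuration equations for the $(N+k)$-body problem in the scaled variables and apply the implicit function theorem at $\Theta=0$. First I would take as unknowns $(q_1,\dots,q_N)$ for the heavy bodies and, for the light bodies, the rescaled positions $\widetilde p_j$ defined by the inverse map $p_j(\Theta,x^*,\widetilde p)=x^*+\Theta^{1/3}\widetilde p_j$ from~(\ref{eq:pj}), together with $x^*\in\re{Q_N}$ kept as an auxiliary variable. The governing equations are~(\ref{eq:cc-big-masses}) for the heavy bodies, the relative-equilibrium equation~(\ref{eqn:line-1}) for $x^*$, and the rescaled light-body equations~(\ref{eq:cc-light-rescaled}). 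Crucially, when $\mu_j=\Theta\widetilde\mu_j$, the heavy-body equation~(\ref{eq:cc-big-masses}) reads $\frac{\partial V}{\partial q_i}(Q_N^m)+\Theta\sum_j\widetilde\mu_j\frac{\partial\vres}{\partial q_i}(Q_N^m,p_j)=0$, so at $\Theta=0$ it decouples into $\frac{\partial V}{\partial q_i}(Q_N^m)=0$, i.e.\ exactly the $N$-body CC equations, with solution $\hat Q_N^m$; and~(\ref{eq:cc-light-rescaled}) at $\Theta=0$ becomes $\frac{\partial}{\partial\widetilde p_j}\widetilde W(x^*,Q_N^m,\widetilde P_k)=0$, i.e.\ the induced-CC equations of Definition~\ref{def:cc-induced}, with solution $\hat P_k^{\widetilde\mu}$; and~(\ref{eqn:line-1}) with $Q_N^m=\hat Q_N^m$ has the non-degenerate solution $x^*$.

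Second, I would handle the rotational/translational degeneracy. The raw system has the $SO(2)$ rotational symmetry (and the problem is invariant under simultaneous rotation of all bodies), so the linearization at the limit solution is automatically singular along the rotation direction and the plain IFT does not apply. The clean fix is to work instead with the reduced system \RS\ developed in Section~\ref{sec:red-sys}: by Theorem~\ref{thm:non-deg-RS} the non-degenerate CC $\hat Q_N^m$ becomes, in a suitable frame and after relabeling, a non-degenerate solution of the reduced system for $N$ bodies, and one uses an analogous center-of-mass/angular-momentum reduction for the full $(N+k)$-body reduced system. Equivalently, one may fix the frame by the conditions $y_{k_1}=0$ and drop the corresponding $y$-equation; then the reduced $(N+k)$-body system has equal numbers of equations and unknowns and no continuous symmetry.

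Third, I would compute the Jacobian of the reduced limit system at $\Theta=0$ and show it is an isomorphism. Because the heavy-body equations at $\Theta=0$ do not involve the light variables, the linearization has block-triangular structure: the diagonal blocks are (i) the Jacobian of the reduced $N$-body CC system at $\hat Q_N^m$, invertible by non-degeneracy of $\hat Q_N^m$ via Lemma~\ref{lem:rank-com-red} and Theorems~\ref{thm:non-deg-RS}/\ref{thm:RS-non-deg-CC}; (ii) the Hessian $\frac{\partial^2\vres}{\partial p^2}(\hat Q_N^m,x^*)$ for the $x^*$-equation, invertible since $x^*$ is non-degenerate; and (iii) the Hessian $D^2_{\widetilde p}\widetilde W(x^*,\hat Q_N^m,\widetilde P_k)$ restricted appropriately, invertible since $\hat P_k^{\widetilde\mu}$ is a non-degenerate induced CC. The off-diagonal coupling blocks (dependence of the $x^*$- and $\widetilde p$-equations on $Q_N$, and of the $\widetilde p$-equations on $x^*$) do not affect invertibility of a block-triangular matrix, so the total Jacobian is invertible. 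One must also check that the error terms are genuinely of the claimed orders: the remainder contribution $\Theta^{1/3}\sum_i\widetilde\mu_i g(\hat Q_N^m,x^*,\widetilde p_i)$ in~(\ref{eq:cc-light-rescaled}) is $C^1$-small as $\Theta\to0$ using $D_pg=\bigo(|p-x|^2)$ and $D^2_pg=\bigo(|p-x|)$, so the system depends continuously differentiably on $\Theta$ down to $\Theta=0$ (after factoring out the common powers of $\Theta$), which is what the IFT requires.

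Fourth, I would invoke the implicit function theorem: for all sufficiently small $\Theta>0$ there is a unique solution branch $(Q_N(\Theta),x^*(\Theta),\widetilde P_k(\Theta))$ near $(\hat Q_N^m,x^*,\hat P_k^{\widetilde\mu})$, with $Q_N(\Theta)=\hat Q_N^m+\bigo(\Theta)$ (the perturbation in the heavy equations is $\bigo(\Theta)$) and $\widetilde P_k(\Theta)=\hat P_k^{\widetilde\mu}+\bigo(\Theta^{1/3})$ (the perturbation in the light equations is $\bigo(\Theta^{1/3})$). Unscaling via~(\ref{eq:pj}) gives $p_j=x^*(\Theta)+\Theta^{1/3}\widetilde p_j(\Theta)=x^*+\Theta^{1/3}\hat p_j+\bigo(\Theta^{2/3})$, where the $\bigo(\Theta^{2/3})$ absorbs both $\Theta^{1/3}\cdot\bigo(\Theta^{1/3})$ from the light correction and $\bigo(\Theta)$ from the motion of $x^*(\Theta)$. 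Non-degeneracy of $Q_{N+k}$ as a CC then follows because the Jacobian of the reduced $(N+k)$-body system is invertible at $\Theta=0$ hence (by continuity) at small $\Theta>0$, and by Theorem~\ref{thm:RS-non-deg-CC} a non-degenerate solution of the reduced system with $x_{n-1}\neq x_n$ is a non-degenerate CC; the separation condition $x_{n-1}\neq x_n$ holds for small $\Theta$ since it holds in the limit by the choice of frame in Theorem~\ref{thm:non-deg-RS}. The main obstacle I anticipate is precisely the bookkeeping of the symmetry reduction together with the fractional scaling: one must choose the reduced coordinates for the combined $(N+k)$-body system so that the $\Theta^{1/3}$-scaling of the light bodies is compatible with the frame-fixing and center-of-mass reduction, and verify that after dividing each block of equations by the appropriate power of $\Theta$ the resulting map is $C^1$ jointly in $(\text{unknowns},\Theta)$ at $\Theta=0$ — the heavy-light coupling terms and the third-order remainder $g$ are where this needs care.
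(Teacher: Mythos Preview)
Your proposal is correct and follows essentially the same route as the paper: pass to the reduced system \RS\ to kill the $SO(2)$-symmetry, write the $(N+k)$-body equations in the variables $(Q_N,x,\widetilde P_k)$ with parameter $\delta=\Theta^{1/3}$ (this is what your ``factor out powers of $\Theta$ and check $C^1$ at $0$'' amounts to), observe the block lower-triangular Jacobian with the three invertible diagonal blocks coming from the three non-degeneracy hypotheses, apply the IFT, read off the orders, and finish with Theorems~\ref{thm:red-to-full} and~\ref{thm:RS-non-deg-CC} using $x_{n-1}\neq x_n$. Two small cleanups: there is no translational degeneracy to remove once $\lambda=1$ and $c=0$ are imposed, and the block $D^2_{\widetilde p}\widetilde W$ needs no restriction since the anisotropic potential has no continuous symmetry.
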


\proof

\begin{figure}[H]
  \centering
  \begin{tikzpicture}

\draw[fill, veryLightGrey] (1.0, 2.0) circle (1.21cm);
\draw[dashed] (1.0, 2.0) circle (1.22cm);
\node[] at (-0.05, 0.9) {$\hat Q_N$};

\draw[fill, lightRed] (0.0, 2.0) circle (0.11cm);
\draw[fill] (0.0, 2.0) circle (0.05cm);
\node[] at (0.2, 1.9) {$\hat q_1$};

\draw[fill, lightRed] (1.0, 3.0) circle (0.11cm);
\draw[fill] (1.0, 3.0) circle (0.05cm);
\node[] at (0.8, 2.9) {$\hat q_2$};

\draw[fill, lightRed] (2.0, 2.0) circle (0.11cm);
\draw[fill] (2.0, 2.0) circle (0.05cm);
\node[] at (1.8, 1.9) {$\hat q_j$};

\draw[dotted] (1.0,1.0) -- (1.0, 1.0) arc(270:450:1.02);

\draw[fill, lightRed] (1.0, 1.0) circle (0.11cm);
\draw[fill] (1.0, 1.0) circle (0.05cm);
\node[] at (1.0, 1.25) {$\hat q_N$};

\draw[fill, red] (0.04, 2.03) circle (0.05cm);

\draw[fill, red] (1.05, 2.95) circle (0.05cm);

\draw[fill, red] (2.05, 1.95) circle (0.05cm);

\draw[fill, red] (0.96, 1.04) circle (0.05cm);

\draw[fill, lightRed] (3.0, 2.0) circle (0.25cm);
\node[] at (2.8, 1.6) {\textcolor{red}{$P_k$}};

\draw[fill, red] (2.82, 2.09) circle (0.01cm);
\draw[fill, red] (2.83, 1.9) circle (0.01cm);
\draw[fill, red] (2.95, 1.8) circle (0.01cm);
\draw[fill, red] (3.2, 1.92) circle (0.01cm);
\draw[fill, red] (3.08, 2.08) circle (0.01cm);
\draw[fill, red] (3.13, 1.81) circle (0.01cm);

\draw[fill] (3.0, 2.0) circle (0.05cm);
\node[] at (3.1, 2.25) {$x^*$};

\draw[->, dashed] (3.05, 2.0) .. controls (5.0, 2.0) and (3.0, 0.0) .. (5.2, 0.0);

\draw[fill, lightGrey] (5.8, -0.2) circle (0.6cm);
\node[] at (6.5, 0.4) {$\hat P_k$};

\draw[fill] (6.2, 0.0) circle (0.03cm);
\draw[fill] (5.7, 0.3) circle (0.03cm);
\draw[fill] (5.3, -0.2) circle (0.03cm);
\draw[fill] (5.5, -0.6) circle (0.03cm);
\draw[fill] (6.1, -0.5) circle (0.03cm);
\draw[fill] (5.85, -0.1) circle (0.03cm);
\draw[fill] (5.75, -0.4) circle (0.03cm);

\draw[dashed, red] (1.05, 2.05) circle (1.22cm);
\node[red] at (2.4, 2.9) {$Q_N$};

\end{tikzpicture}
\caption{We are given positions of ${\hat Q}_N^m\in {\cc{N}}$, $x^*\in\re{{\hat Q}_N^m}$ and  ${\hat P}_k^{\widetilde{\mu}}\in \cc{}(\widetilde{W}, {\hat Q}_N^m, x^*)$ for $\widetilde{\mu}$ marked in black. Light red circles symbolize balls around ${\hat Q}_N^m$ and  $x^*$. Dark red points indicate $\cc{N+k}$.}\label{fig:two-universes}
\end{figure}
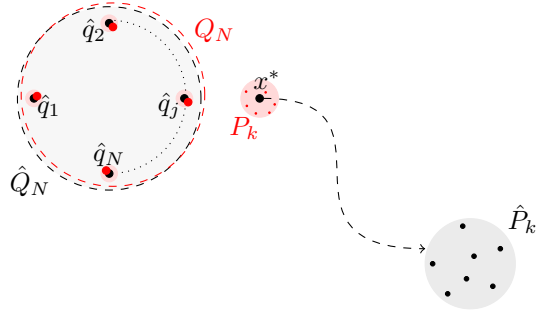

In the proof we will use the implicit function theorem applied to reduced system in the sense defined in Section~\ref{sec:red-sys}.
In view of Theorem~\ref{thm:non-deg-RS} we can assume that configuration ${\hat Q}_N^m$ is a solution of \RS with
 $\hat{q}_N$ being computed by the center of mass condition, $\hat{q}_{N-1,y}=0$, $\hat{q}_{N-1,x} \neq \hat{q}_{N,x}$.

In the proof we use the notation introduced in~(\ref{eqn:scaling}), therefore
from~(\ref{eq:cc-big-masses}) and~(\ref{eqn:all-lines}), it follows that a solution of the below system (\ref{eqn:ccNplusk}), with $\delta = \Theta^{1/3}$ as a parameter (to have analytic dependence on the parameter) and $q_i \in \mathbb{R}^2$, for $i=1,\dots,N-2$, $q_{N-1,x} \in \mathbb{R}$ , $x \in \mathbb{R}, \widetilde{p}_j \in \mathbb{R}^2$, $j=1,\dots,k$ as variables to be solved for,
gives rise to a solution of \RS for $(N+k)$-bodies with masses $m_i$ for $i=1,\dots,N$ and $\mu_j=\Theta \widetilde{\mu}_j$ for $j=1,\dots,k$
\begin{subequations}
\begin{align}
 \frac{\partial V}{\partial q_i}(\qnm) + \Theta\sum_{j=1}^k \widetilde{\mu}_j \frac{\partial \vres}{\partial q_i}(\qnm, p_j(\Theta,x,\widetilde{p}_j)) &=0, \qquad \underline{i=1,\dots, N-2}.  \label{eq:ift-big-masses}\\
  \frac{\partial V}{\partial q_{N-1,x}}(\qnm) + \Theta\sum_{j=1}^k \widetilde{\mu}_j \frac{\partial \vres}{\partial q_{N-1,x}}(\qnm, p_j(\Theta,x,\widetilde{p}_j)) &=0, \\
 \frac{\partial \vres}{\partial p}(\qnm,x) &= 0, \label{eqn:sys-big}\\
 \frac{\partial }{\partial \widetilde{p_j}}\left( \widetilde{W}(x,\qnm, \widetilde{P}_k) +  \Theta^{\frac{1}{3}} \sum_{i=1}^k\widetilde{\mu}_{i}  g(\qnm,x,\widetilde{p_i}) \right) &= 0, \qquad \underline{j=1,\dots, k},  \label{eq:ift-small-masses}
\end{align}
\label{eqn:ccNplusk}
\end{subequations}
where
\begin{equation}
  q_N(q_1,\dots,q_{N-1},p_1,\dots,p_j)=-\frac{1}{m_N}\left(\sum_{i=1}^{N-1} m_i q_i + \Theta \sum_{j=1}\tilde{\mu}_j p_j(\Theta,x,\tilde{p}_j)  \right)
\end{equation}

By the implicit function theorem\footnote{ Let us denote by $F$ the right hand side of system (\ref{eqn:ccNplusk}). Let $\Omega$ be is an open set in $\reals^{2N-1+2k} \times \reals$.  Let $F\colon \Omega\to \reals^{2N-1+2k}$ be the system~(\ref{eqn:ccNplusk}). We know  that $F\in C^1$ and for $(\hat Q_N, x^*, \hat P_k, \delta=0)\in \Omega$ it holds $F(\hat Q_N, x^*, \hat P_k, \delta=0) = 0$. Moreover
$\frac{\partial{F}}{\partial (Q_N, x, \widetilde{P}_k)}(\hat Q_N, x^*, \hat P_K)$ is an isomorphism. Note that since $(\partial{F})/(\partial (Q_N, x, \widetilde{P}_k))(\hat Q_N, x^*, \hat P_K)$ is lower triangular matrix, it is enough to have non-degenerate blocks on the diagonal, i.e.\ $(\partial^2 V)/(\partial q_i^2)(\hat Q_N, x^*, \hat P_K)$, $(\partial^2 \vres)/(\partial p^2)(\hat Q_N, x^*, \hat P_K)$, $(\partial^2 \widetilde{W})/(\partial \widetilde{p}_j^2)(\hat Q_N, x^*, \hat P_K)$.

Then there exists $\gamma >0$, $\eta > 0$ and  a smooth
function
$\varphi$ (note that in our case the function $\varphi$ is smooth for $\delta = \Theta^{1/3}$)
such that: $\mbox{Ball}(0, \gamma)\times \mbox{\rm Ball}((\hat Q_N, x^*, \hat P_K), \eta)\subset \Omega$ and for all
$|\delta| < \gamma$, the only point $(Q_N, x, \widetilde{P}_k) \in {\rm Ball}((\hat Q_N, x^*, \hat P_k), \delta)$ satisfying $F(Q_N, x, \widetilde{P}_k, \delta) = 0$ is
$(Q_N, x, \widetilde{P}_k) = (Q_N(\delta), x(\delta), \widetilde{P}_k(\delta)) = \varphi(\delta)$.},
we know that  for sufficiently small $\delta > 0$, the system~(\ref{eqn:ccNplusk}) has unique, nondegenerate solution. These solutions are
\begin{eqnarray*}
  q_i(\delta) & = & \hat q_i+ \bigo(\delta^3), \\
  x(\delta) &=& x^* + \bigo(\delta^3), \\
    \widetilde{p}_j(\delta) &=& \hat p_j + \bigo(\delta).
\end{eqnarray*}

Written in terms of $\Theta$ we have
\begin{eqnarray*}
  q_i(\Theta) & = & \hat q_i+ \bigo(\Theta), \\
  x(\Theta) &=& x^* + \bigo(\Theta), \\
    \widetilde{p}_j(\Theta) &=& \hat p_j + \bigo(\Theta^{1/3}).
\end{eqnarray*}

From the above, (\ref{eq:pj})  we obtain
\begin{eqnarray*}
  p_j(\Theta) & =& x(\Theta) + \Theta^{1/3}\widetilde{p}_j(\Theta)\\
  & = & \left(x^* + \bigo(\Theta) \right) + \Theta^{1/3}\left(\hat p_j + \bigo(\Theta^{1/3}) \right)\\
  & = & x^* + \Theta^{1/3}\hat p_j + \bigo(\Theta^{2/3}).
\end{eqnarray*}

Observe that
\begin{eqnarray*}
  x_{N-1}(\Theta) - x_N(\Theta)&=& \hat{x}_{N-1} + \bigo(\Theta) - \left(\frac{1}{m_N}\left(\sum_{i=1}^{N-1} m_i x_i + \Theta \sum_{j=1}\tilde{\mu}_j p_{j,x}(\Theta)  \right)  \right) \\
    &=&\hat{x}_{N-1} - \hat{x}_N + \bigo(\Theta).
\end{eqnarray*}
Therefore for sufficiently small $\Theta$ from Theorem~\ref{thm:red-to-full} it follows that our solution of \RS is a normalized central configuration.

The configuration is non-degenerate by Theorem~\ref{thm:RS-non-deg-CC},  because it is a non-degenerate solution of \RS and $x_{n-1}\neq x_n$.

\qed


\section{The limit equation for light bodies, some a priori bounds}
\label{sec:restr-cc-light-bodies}

In this section we start  investigation of  the critical  points of the potential $\widetilde{W}(x^*,\qnm, \pkm)$ with    fixed $x^*$ and $\qnm$. Since we focus on the light bodies, we drop $(x^*,\qnm)$ and all tildes in notation.
By a suitable rotation we diagonalize a quadratic form $ \frac{\partial^2}{\partial p^2}\vres(Q_N,x^*)$, which becomes
\begin{equation}
  \frac{\partial^2}{\partial q^2}\vres(Q_N,x^*)((x,y),(x,y))=\frac{a x^2}{2} + \frac{b y^2}{2}
\end{equation}
and we use the following notation
 \begin{equation}
 p_i=(x_i,y_i), \quad r_{ij}=|p_i - p_j|.
 \end{equation}

\begin{definition}
\label{def:ccrbp}
{\em Central configurations of restricted $k$-light body problem}
\ccrbp{k}   are the critical points of the following potential
\begin{eqnarray}
 W(\pkm)&=& \sum_i \frac{\mu_i}{2}(a x_i^2 + b y_i^2)   + \sum_{i<j} \frac{\mu_i \mu_j}{|p_i - p_j|}.
    \label{eq:V-expansion}
\end{eqnarray}
Alternatively, we will use the name \emph{the central configuration problem in anisotropic space for $k$-bodies}.
\end{definition}

We obtain for \ccrbp{k} the following system of equations
\begin{equation}\label{eqn:a-b}
\boxed{
\begin{aligned}
  \mu_i a x_i &=&  \sum_{j, j \neq i} \frac{\mu_i \mu_j (x_i - x_j)}{r_{ij}^3}, \qquad \forall i\in \{1, \ldots, k\} \\
     \mu_i b y_i &=&  \sum_{j, j \neq i} \frac{\mu_i \mu_j (y_i - y_j)}{r_{ij}^3}, \qquad \forall i\in \{1, \ldots, k\}.
\end{aligned}}
\end{equation}

Observe also that for $a=b>0$ we obtain the problem normalized central configurations (with $a$ being an inverse of the gravitational constant).
If $a \neq b$, then the rotational symmetry is broken, so system (\ref{eqn:a-b}) can be seen as \emph{the central configuration problem in anisotropic space}.

System of equations~(\ref{eqn:a-b}) has the same scaling properties as the system of equations (\ref{eq:cc}), hence we can scale masses and variables $(x_i,y_i)$
so that  $\sum_i \mu_i=1$ (this is our choice in CAP discussed later in the paper).  We can also scale $a,b$ and variables $(x_i,y_i)$ to  obtain $|a|=1$ (or $|b|=1$), hence  only the ratio $\frac{a}{b}$ matters.

After adding equations~(\ref{eqn:a-b}) for $x_i$'s and $y_i$'s  separately we obtain
\begin{equation}
  c=\sum_i \mu_i p_i =0.   \label{eq:cc-cofmass}
\end{equation}
We see that the center of mass for solutions for (\ref{eqn:a-b}) is at the origin, just as in the case of \cc{N}.

\subsection{Moment of inertia-like quantities}
\label{subsec:mofi}
This is an adaptation of the well known identities for central configurations (see \cite[Sec. 2 and 3]{MZ}) to the present context.

\begin{lemma}
\label{lem:ccab-mofin}
Assume that
\begin{equation}
  M=\sum_i \mu_i=1. \label{eq:sum-mui}
\end{equation}
For solutions of~(\ref{eqn:a-b})
hold
\begin{subequations}
\begin{align}
  a \left( \sum_i \mu_i x_i^2 \right) = a\left(\sum_{i<j} \mu_i\mu_j(x_i - x_j)^2 \right) & =  \sum_{i<j} \frac{\mu_i\mu_j}{r_{ij}^3}(x_i - x_j)^2\label{eqn:ax}\\
b \left( \sum_i \mu_i y_i^2 \right) =  b\left(\sum_{i<j} \mu_i\mu_j(y_i - y_j)^2 \right) & =  \sum_{i<j} \frac{\mu_i\mu_j}{r_{ij}^3}(y_i - y_j)^2\label{eqn:by}\\
  a \left( \sum_i \mu_i x_i^2 \right) +  b \left( \sum_i \mu_i y_i^2 \right)&=\sum_{i<j} \frac{\mu_i \mu_j}{r_{ij}}.\label{eq:aplusb}
\end{align}
\end{subequations}
\end{lemma}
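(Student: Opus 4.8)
The plan is to mimic the classical Lagrange--Jacobi type identities for Newtonian central configurations, as done in \cite[Sec.~2 and 3]{MZ}, now adapted to the anisotropic potential~(\ref{eq:V-expansion}). The two ``diagonal'' identities~(\ref{eqn:ax}) and~(\ref{eqn:by}) are proved the same way in the $x$- and $y$-directions, so I will carry out the argument for the $x$-direction and note that the $y$-direction is identical with $a$ replaced by $b$; identity~(\ref{eq:aplusb}) then follows by adding~(\ref{eqn:ax}) and~(\ref{eqn:by}).

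First I would establish the two elementary algebraic facts about weighted sums that turn ``one-index'' sums into ``pair'' sums. Namely, for any reals $t_1,\dots,t_k$, using $M=\sum_i\mu_i=1$ and the center-of-mass relation~(\ref{eq:cc-cofmass}) (which gives $\sum_i\mu_i x_i=0$, hence $\sum_i\mu_i t_i=0$ when $t_i=x_i$), one has
\begin{equation*}
\sum_i \mu_i x_i^2 \;=\; \sum_i \mu_i x_i^2 \Bigl(\sum_j \mu_j\Bigr) - \Bigl(\sum_i \mu_i x_i\Bigr)^2 \;=\; \sum_{i<j}\mu_i\mu_j (x_i-x_j)^2,
\end{equation*}
which is precisely the first equality in~(\ref{eqn:ax}); the analogous computation with $y_i$ gives the first equality in~(\ref{eqn:by}). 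This step is routine and uses only~(\ref{eq:sum-mui}) and~(\ref{eq:cc-cofmass}).

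Next, for the second (substantive) equality in~(\ref{eqn:ax}), I would take the $x$-equation of the system~(\ref{eqn:a-b}), cancel the common factor $\mu_i$, multiply both sides by $\mu_i x_i$, and sum over $i$:
\begin{equation*}
\sum_i \mu_i a x_i^2 \;=\; \sum_i \sum_{j\neq i} \frac{\mu_i\mu_j (x_i-x_j)}{r_{ij}^3}\, x_i.
\end{equation*}
On the right-hand side I symmetrize: the sum over all ordered pairs $(i,j)$ with $i\neq j$ can be grouped into unordered pairs, and pairing the $(i,j)$ term with the $(j,i)$ term gives $\frac{\mu_i\mu_j}{r_{ij}^3}\bigl((x_i-x_j)x_i + (x_j-x_i)x_j\bigr) = \frac{\mu_i\mu_j}{r_{ij}^3}(x_i-x_j)^2$, so the double sum equals $\sum_{i<j}\frac{\mu_i\mu_j}{r_{ij}^3}(x_i-x_j)^2$. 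Combined with the first equality this yields~(\ref{eqn:ax}); repeating verbatim with $y$ and $b$ yields~(\ref{eqn:by}). Finally, adding~(\ref{eqn:ax}) and~(\ref{eqn:by}) and using $r_{ij}^2=(x_i-x_j)^2+(y_i-y_j)^2$ gives
\begin{equation*}
a\sum_i\mu_i x_i^2 + b\sum_i\mu_i y_i^2 \;=\; \sum_{i<j}\frac{\mu_i\mu_j}{r_{ij}^3}\,r_{ij}^2 \;=\; \sum_{i<j}\frac{\mu_i\mu_j}{r_{ij}},
\end{equation*}
which is~(\ref{eq:aplusb}). There is no real obstacle here; the only point requiring a little care is the symmetrization of the double sum over pairs (making sure the diagonal $i=j$ is correctly excluded and each unordered pair counted once), and the bookkeeping of which direction carries the factor $a$ versus $b$ — everything else is a direct transcription of the Newtonian argument from~\cite{MZ}.
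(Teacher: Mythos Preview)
Your proposal is correct and follows essentially the same approach as the paper's proof: the paper also establishes the first equality in~(\ref{eqn:ax}) via the center-of-mass relation~(\ref{eq:cc-cofmass}) together with $\sum_i\mu_i=1$, obtains the second equality by multiplying the $x$-equation of~(\ref{eqn:a-b}) by $x_i$, summing, and symmetrizing the resulting double sum, and then derives~(\ref{eq:aplusb}) by adding~(\ref{eqn:ax}) and~(\ref{eqn:by}) and using $r_{ij}^2=(x_i-x_j)^2+(y_i-y_j)^2$. The only differences are cosmetic presentation of the same algebraic identities.
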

\begin{proof}
See Appendix~\ref{inertia-proof}.
\end{proof}

Since the rightmost expressions in (\ref{eqn:ax}, \ref{eqn:by}) are positive we immediately obtain the following conclusion.
\begin{conclusion}
\label{thm:nosol}
The number of solutions of~(\ref{eqn:a-b}) is:
\begin{itemize}
\item If $a,b \leq 0$, then there are no solutions.

\item If $a\leq 0$ and $b>0$, then all solutions
are on \OY\ ($\forall i\colon x_i=0$). By the Moulton's Theorem (see~\cite{Mou,Sm}), there are $k!$ solutions, one for each ordering of the bodies  on \OY.

\item If $a>0$ and $b\leq 0$, then all solutions
are on \OX\ ($\forall i\colon y_i=0$).  By the Moulton's Theorem (see~\cite{Mou,Sm}) there are $k!$ solutions, one for each ordering of the bodies
  on \OX.
\end{itemize}
\end{conclusion}

Thus the only non-trivial case is $a, b > 0$. In this case,  from the Moulton Theorem, we also have collinear solutions on $x$ and $y$ axes.

\subsection{Lower bound for the distance between bodies}
Just like in \cite{MZ} from the  Lemma~\ref{lem:ccab-mofin} we derive  the lower bound for $r_{ij}$ provided that we have an upper bound for $I=\sum_i \mu_i r_i^2$.

\begin{lemma}\label{lm:lower-bnd}
Assume (\ref{eq:sum-mui}) and $a,b>0$.
If $\pkm$ is a solution of~(\ref{eqn:a-b}),
then for any bodies $p_i, p_j$ in $\pkm$
\begin{equation}
r_{ij} \geq \frac{\mu_i\mu_j}{\max\{a, b\}R^2 },
\end{equation}
where $R=\max_i |p_i|$.
\end{lemma}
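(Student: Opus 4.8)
The plan is to mimic the proof of Theorem~\ref{thm:ncc-lbnd} (i.e.\ \cite[Thm.~6]{MZ}), using the moment-of-inertia identities from Lemma~\ref{lem:ccab-mofin} as the substitute for the classical Lagrange--Jacobi identity. Fix bodies $p_i,p_j$ with $i<j$ and work from the identity~(\ref{eq:aplusb}):
\begin{equation*}
a\left(\sum_\ell \mu_\ell x_\ell^2\right) + b\left(\sum_\ell \mu_\ell y_\ell^2\right) = \sum_{s<t} \frac{\mu_s\mu_t}{r_{st}}.
\end{equation*}
First I would bound the left-hand side from above: since $R=\max_\ell|p_\ell|$, we have $\sum_\ell\mu_\ell x_\ell^2 \le R^2\sum_\ell\mu_\ell = R^2$ and likewise $\sum_\ell\mu_\ell y_\ell^2\le R^2$, using the normalization~(\ref{eq:sum-mui}); hence the left-hand side is at most $(a+b)R^2 \le 2\max\{a,b\}R^2$. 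Actually, to get the stated constant $\max\{a,b\}$ rather than $2\max\{a,b\}$, I would instead note $a\sum_\ell\mu_\ell x_\ell^2 + b\sum_\ell\mu_\ell y_\ell^2 \le \max\{a,b\}\sum_\ell\mu_\ell(x_\ell^2+y_\ell^2) = \max\{a,b\}\sum_\ell\mu_\ell|p_\ell|^2 \le \max\{a,b\}R^2$.

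Next I would bound the right-hand side from below by isolating the single $(i,j)$ term: every summand $\mu_s\mu_t/r_{st}$ is positive, so
\begin{equation*}
\sum_{s<t}\frac{\mu_s\mu_t}{r_{st}} \ge \frac{\mu_i\mu_j}{r_{ij}}.
\end{equation*}
Combining the two bounds gives $\mu_i\mu_j/r_{ij} \le \max\{a,b\}R^2$, which rearranges to the claimed inequality $r_{ij} \ge \mu_i\mu_j/(\max\{a,b\}R^2)$. The argument needs $a,b>0$ both for Lemma~\ref{lem:ccab-mofin} to apply and so that the right-hand side of~(\ref{eq:aplusb}) is genuinely a sum of positive terms; this hypothesis is already in the statement.

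There is essentially no hard part here — the lemma is a direct corollary of the identity~(\ref{eq:aplusb}) together with two one-line estimates, exactly paralleling the proof of the isotropic version in~\cite{MZ}. The only point requiring a moment's care is getting the sharp constant $\max\{a,b\}$ (rather than $a+b$), which is handled by the pointwise inequality $ax_\ell^2+by_\ell^2\le\max\{a,b\}(x_\ell^2+y_\ell^2)$ before summing against the weights $\mu_\ell$. Since the proof is short and routine, I would expect it to be relegated to the appendix alongside the proof of Lemma~\ref{lem:ccab-mofin}.
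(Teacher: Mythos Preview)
Your proof is correct and matches the paper's approach essentially line for line: the paper also invokes identity~(\ref{eq:aplusb}), bounds the left-hand side by $\max\{a,b\}\sum_\ell \mu_\ell|p_\ell|^2 \le \max\{a,b\}R^2$ using~(\ref{eq:sum-mui}), drops all but the $(i,j)$ term on the right, and rearranges. Your prediction that the proof is routine and relegated to the appendix is also correct.
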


\begin{proof}
See Appendix~\ref{lower-bound-proof}
\end{proof}

\subsection{Upper bound for the size}

This is an adaptation of the upper bound on the size  of the normalized central configurations from \cite[Sec. 3.2]{MZ} recalled in the present paper as Theorem~\ref{thm:ncc-upperBound}.

From Conclusion~\ref{thm:nosol} it follows we cannot have both $a\leq 0$ and $b \leq 0$,
hence we assume $a>0$ or $b >0$.

\begin{theorem}
\label{thm:upp-bnd}
Assume that $a>0$ or $b >0$.
Assume $\pkm$ is a solution of~(\ref{eqn:a-b}) and $M=\sum_i \mu_i$.
If $a>0$, then
\begin{equation}
  \max_{i} |x_i| \leqslant  \left\{
                                   \begin{array}{ll}
                                     (n-1) \left(\frac{M}{a}\right)^{1/3}, & \hbox{$n\geqslant  2$;} \\
                                     \left(2^{1/3}+2^{-2/3} \right) (n-2)^{2/3}\left( \frac{M}{a}\right)^{1/3}, & \hbox{$n\geqslant  4$.}
                                   \end{array}
                                 \right.
\end{equation}
and if $b>0$, then
\begin{equation}
  \max_{i} |y_i|  \leqslant  \left\{
                                   \begin{array}{ll}
                                     (n-1) \left(\frac{M}{b}\right)^{1/3}, & \hbox{$n\geqslant  2$;} \\
                                     \left(2^{1/3}+2^{-2/3} \right) (n-2)^{2/3}\left( \frac{M}{b}\right)^{1/3}, & \hbox{$n\geqslant  4$.}
                                   \end{array}
                                 \right.
\end{equation}
\end{theorem}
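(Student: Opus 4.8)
The plan is to prove the two displayed inequalities by one and the same argument, the $\max_i|y_i|$ bound being obtained from the $\max_i|x_i|$ bound by interchanging the roles of the coordinates $x\leftrightarrow y$ and of the anisotropy constants $a\leftrightarrow b$; so I would only treat the bound on $\max_i|x_i|$, under the hypothesis $a>0$. If moreover $b\le 0$, then by Conclusion~\ref{thm:nosol} every solution of (\ref{eqn:a-b}) has all $y_i=0$, so (\ref{eqn:a-b}) collapses to the one--dimensional system of equations for a normalized central configuration with masses $\mu_i/a$ and total mass $M/a$, and the desired estimate is then exactly Theorem~\ref{thm:ncc-upperBound} applied to that one--dimensional system. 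Hence I may assume $a,b>0$ (the corner $a\le0$, $b>0$, for which the theorem asserts only the $y$--bound, is the mirror situation and is handled symmetrically).

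With $a,b>0$, I would reproduce the proof of Theorem~\ref{thm:ncc-upperBound} from \cite{MZ} under the following translation. Dividing the first line of (\ref{eqn:a-b}) by $a$ gives $x_i=\sum_{j\neq i}\frac{(\mu_j/a)(x_i-x_j)}{r_{ij}^3}$, which is precisely the system satisfied by the first coordinates of a planar normalized central configuration with masses $\widetilde\mu_i:=\mu_i/a$, the only difference being that $r_{ij}$ is the full planar distance $|p_i-p_j|$ rather than $|x_i-x_j|$. So I would replace ``$m_i$'' by ``$\widetilde\mu_i$'' (hence ``$M$'' by ``$M/a$'') throughout the \cite{MZ} argument; pick $i_0$ with $|x_{i_0}|=\max_i|x_i|=:R_x$ and, using the obvious reflection symmetry $x_i\mapsto-x_i$ of (\ref{eqn:a-b}), assume $x_{i_0}=R_x$; and feed into the argument the center--of--mass relation $\sum_i\mu_ix_i=0$ from (\ref{eq:cc-cofmass}) together with the elementary inequalities $0\le x_{i_0}-x_j\le r_{ij}$ (the left because $x_j\le R_x$, the right because $r_{ij}\ge|x_{i_0}-x_j|=x_{i_0}-x_j$), and more generally $r_{ij}\ge|x_i-x_j|$ and $|x_j|\le R_x$, wherever \cite{MZ} uses the corresponding one--dimensional facts. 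The outcome is the bound of Theorem~\ref{thm:ncc-upperBound} with $M$ replaced by $M/a$, which is what is claimed; the $n\ge4$ refinement should come out the same way, since the improvement in \cite{MZ} is purely a matter of counting terms and of mutual distances appearing in denominators.

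The step that needs genuine verification --- and which I expect to be the main obstacle --- is that the $x$--projection of (\ref{eqn:a-b}) is \emph{not} itself a central--configuration system, because $r_{ij}$ couples both coordinates; so the ``verbatim'' claim has to be checked, not asserted. Concretely, I would go through the \cite{MZ} proof of Theorem~\ref{thm:ncc-upperBound} and confirm that no estimate there relies on an upper bound of the shape $r_{ij}\le|x_i|+|x_j|$ (which can fail in the anisotropic setting), but only on lower bounds for $r_{ij}$ and on $r_{ij}$ occurring in denominators. I expect this to hold, because that argument is built to force the mutual distances to be large in order to bound the size of the configuration, and in such estimates replacing $|x_i-x_j|$ by the larger $|p_i-p_j|$ can only help; but certifying this is the substantive part of the work. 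If some step did genuinely require the planar norm, the fallback would be to rerun that step with $r_{ij}$ carried honestly and check that the extra slack is absorbed by the $a^{-1/3}$ rescaling.
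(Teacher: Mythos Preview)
Your approach is essentially the paper's: it too adapts the cluster argument from \cite{MZ}, first proving (Lemma~\ref{lem:upp-bnd}) that $R-(n-2)\varepsilon<M/(a\varepsilon^2)$ for every $\varepsilon\in(0,R/(n-1))$, and then optimizing over $\varepsilon$. Your main conjecture is confirmed by that proof: the argument uses only the $x$-equations of (\ref{eqn:a-b}), the center-of-mass relation (\ref{eq:cc-cofmass}), and the inequalities $|x_i-x_j|\le r_{ij}$ and $r_{ij}>\varepsilon$ for bodies outside the cluster --- no upper bound of the type $r_{ij}\le|x_i|+|x_j|$ is ever invoked. One simplification over your outline: the preliminary case split on the sign of $b$ is unnecessary, since the cluster argument for $\max_i|x_i|$ reads only the $x$-component of (\ref{eqn:a-b}) and goes through for arbitrary $b$.
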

\begin{proof}
See Appendix~\ref{upp-thm-proof}
\end{proof}


\section{Analytical solutions  for CCs in anisotropic plane}
\label{sec:analytic-sol}

\subsection{Collinear solutions}

As it was noticed at the end of Subsection~\ref{subsec:mofi}, by the Moulton Theorem \cite{Mou,Sm}, there are $k!$ collinear solutions on both $x$ and $y$ axes.
The next theorem states  that if $a \neq b$ there are no other collinear solutions.
\begin{theorem}
\label{thm:coll-on-coord-axis}
Assume that $(p_1, \ldots, p_k)$ is collinear CC for the restricted problem~(\ref{eqn:a-b})
with $a \neq b$. Then either $x_i=0$ for all  $i=1,\ldots,n$, or  $y_i=0$ for all $i=1,\dots, n$.
\end{theorem}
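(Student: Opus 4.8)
The plan is to suppose $(p_1,\dots,p_k)$ is a collinear solution of~(\ref{eqn:a-b}) lying on some line $\ell$ through the origin (recall the center of mass is at the origin by~(\ref{eq:cc-cofmass})), and to show that the only lines $\ell$ which can support a solution with $a\neq b$ are the two coordinate axes. Parametrize the line by a unit direction vector $u=(\cos\theta,\sin\theta)$, so that $p_i = t_i u$ for real scalars $t_i$, not all equal (the bodies are distinct). Then $p_i-p_j = (t_i-t_j)u$ and $r_{ij} = |t_i-t_j|$, so the right-hand sides of both equations in~(\ref{eqn:a-b}) become, after dividing by $\mu_i$,
\[
\sum_{j\neq i}\frac{\mu_j(t_i-t_j)}{|t_i-t_j|^3}\,u =: S_i\, u,
\qquad S_i := \sum_{j\neq i}\frac{\mu_j\,\mathrm{sgn}(t_i-t_j)}{(t_i-t_j)^2}.
\]
Meanwhile the left-hand sides are $a x_i = a t_i\cos\theta$ and $b y_i = b t_i\sin\theta$. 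So the system~(\ref{eqn:a-b}) is equivalent to the two scalar families
\[
a\, t_i\cos\theta = S_i\cos\theta,
\qquad
b\, t_i\sin\theta = S_i\sin\theta,
\qquad i=1,\dots,k.
\]

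First I would dispose of the degenerate directions: if $\cos\theta = 0$ the line is \OY\ and we are in the case $x_i=0$ for all $i$; if $\sin\theta=0$ the line is \OX\ and $y_i=0$ for all $i$. So assume $\cos\theta\neq 0$ and $\sin\theta\neq 0$; I must derive a contradiction. Dividing the first scalar equation by $\cos\theta$ and the second by $\sin\theta$ gives, for every $i$,
\[
a\, t_i = S_i \qquad\text{and}\qquad b\, t_i = S_i,
\]
hence $(a-b)\,t_i = 0$ for all $i$. Since $a\neq b$ this forces $t_i=0$ for all $i$, contradicting the fact that the $t_i$ are distinct (a collinear CC has at least $k\ge 2$ distinct bodies, so not all $t_i$ can coincide, let alone all be $0$). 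This contradiction shows no such "oblique" collinear solution exists, which is exactly the claim.

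The argument is essentially immediate once one writes $p_i=t_iu$; there is no real obstacle, only the bookkeeping of ruling out $\cos\theta=0$ and $\sin\theta=0$ separately (which land precisely in the two allowed conclusions) and noting that distinctness of the bodies rules out the all-zero solution. One subtlety worth stating explicitly: a collinear configuration need not a priori pass through the origin, but~(\ref{eq:cc-cofmass}) guarantees the center of mass is at the origin, and the center of mass of points on a line lies on that line, so the line does contain the origin and the parametrization $p_i=t_iu$ with a single direction $u$ is legitimate. With that remark in place the proof is complete.
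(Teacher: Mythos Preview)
Your proof is correct. Once you note that the line passes through the origin (via~(\ref{eq:cc-cofmass})) and parametrize $p_i=t_iu$, the system collapses to $a t_i=S_i=b t_i$ on any oblique line, which immediately forces all $t_i=0$.

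The paper's argument is organized differently. Rather than parametrizing the line, it divides each equation in~(\ref{eqn:a-b}) by $\mu_i$ and forms consecutive differences, producing a single $(k-1)\times(k-1)$ matrix $A$ (depending only on the $r_{ij}$ and the masses) such that the vector of $x$-differences $(x_1-x_2,\dots,x_{k-1}-x_k)$ is an eigenvector of $A$ with eigenvalue $a$, and the vector of $y$-differences is an eigenvector with eigenvalue $b$. Collinearity makes these two vectors parallel, and a nonzero vector cannot be an eigenvector for two distinct eigenvalues, so one of them vanishes identically. Your approach avoids building $A$ at the cost of invoking the center-of-mass condition to place the line through the origin; the paper's approach works directly with differences and so never needs that step, and its eigenvector formulation makes the obstruction ($a\neq b$) structurally transparent. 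Both arrive at the same conclusion with comparable effort; yours is the more elementary of the two.
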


\begin{proof}
See Appendix~\ref{coll-proof}.
\end{proof}

\subsection{$k=2$}
For $k=2$ all the solutions are collinear and contained in coordinate axes.

\subsection{$k=3$ with equal masses}

For $k=3$  by the Moulton Theorem we have $2\cdot 3!$  collinear solutions contained in coordinate axes.

It appears   that there exists only one type of non-collinear solutions described in the next subsection. The computer assisted proof reported in Section~\ref{sec:cc2k} confirms this for a particular values $a=3/4$ and $b=9/4$.

\subsubsection{The isosceles triangle}

\begin{theorem}
\label{thm:iso-triangle}
Assume that $a,b>0$ and $\mu_1=\mu_2=\mu_3=\mu$ and
\begin{equation}
\frac{b}{a} > \displaystyle{\frac{5}{12}} .\label{eqn:assumption}
\end{equation}
 Then there exists a solution of~(\ref{eqn:a-b})
which forms an isosceles triangle symmetric with respect to \OX. Moreover, if we denote by $s$ the side and by $r$ the base of the triangle, then
\begin{itemize}
\item $s = \displaystyle{\left(\frac{3\mu}{a}\right)^{1/3}}$
\item $r = \displaystyle{\left(\frac{2\mu}{b-\frac{a}{3}}\right)^{1/3}}$.
\end{itemize}

\end{theorem}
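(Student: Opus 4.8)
The plan is to exploit the reflection symmetry of the potential $W$ in~(\ref{eq:V-expansion}): $W$ is invariant under $(x,y)\mapsto(x,-y)$ together with the transposition of bodies $2$ and $3$, so I would search for a solution of~(\ref{eqn:a-b}) lying in the fixed‑point set of this symmetry, namely
\begin{equation*}
 p_1=(x_1,0),\qquad p_2=(x_2,h),\qquad p_3=(x_2,-h),\qquad h>0 .
\end{equation*}
With $\mu_1=\mu_2=\mu_3=\mu$, the center of mass condition~(\ref{eq:cc-cofmass}) forces $x_1=-2x_2$, and the $y$‑component of~(\ref{eqn:a-b}) for body $1$ holds identically. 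Writing $s:=r_{12}=r_{13}$ and $r:=r_{23}=2h$, the remaining equations — the $x$‑equations of bodies $1,2$ and the $y$‑equation of body $2$, those of body $3$ being mirror images — read
\begin{align*}
 a x_1 &= \frac{2\mu(x_1-x_2)}{s^3}, & a x_2 &= \frac{\mu(x_2-x_1)}{s^3}, & b &= \frac{\mu}{s^3}+\frac{2\mu}{r^3},
\end{align*}
where in the last one I have cancelled the factor $h\neq0$.

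Next I would solve this reduced system. Since $x_1-x_2=-3x_2$, the middle equation gives, for a genuine non‑collinear triangle (so $x_2\neq0$), the relation $a=3\mu/s^3$, that is $s=(3\mu/a)^{1/3}$; the first equation reduces to the same relation and is therefore automatically satisfied. Substituting $\mu/s^3=a/3$ into the third equation yields $b-\tfrac{a}{3}=2\mu/r^3$, hence $r=\bigl(2\mu/(b-\tfrac{a}{3})\bigr)^{1/3}$, which is meaningful precisely when $b/a>1/3$. This already produces the two claimed formulas for the side $s$ and the base $r$.

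It remains to check that this data really closes up into a planar triangle. The identity $s^2=(x_1-x_2)^2+h^2=9x_2^2+r^2/4$ gives $x_2^2=(s^2-r^2/4)/9$, which is positive — so that $x_2\neq0$ and the triangle is non‑degenerate — if and only if $s^2>r^2/4$. Raising to the sixth power and inserting $s^3=3\mu/a$ and $r^3=6\mu/(3b-a)$ turns this into $16(3b-a)^2>a^2$, i.e.\ (using $3b-a>0$) into $4(3b-a)>a$, which is exactly $b/a>5/12$, assumption~(\ref{eqn:assumption}). Conversely, given $a,b>0$ with $b/a>5/12$, I would define $s,r$ by those two formulas, set $h=r/2$, $x_2=\tfrac{1}{3}\sqrt{s^2-r^2/4}$, $x_1=-2x_2$, and verify the three displayed scalar equations directly; by the reflection symmetry the full system~(\ref{eqn:a-b}) then holds, and the resulting configuration is a non‑degenerate isosceles triangle symmetric about \OX. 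I expect the only genuinely non‑mechanical point to be this last step: recognizing that the threshold $5/12$ is imposed not by the equilibrium equations (which merely require $b/a>1/3$) but by the triangle‑realizability inequality $2s>r$; the rest is routine algebra.
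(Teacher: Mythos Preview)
Your proof is correct and follows essentially the same route as the paper: impose the reflection symmetry, use the center of mass to reduce to two scalar equations, solve for $s$ and $r$, and then check that $s^2>r^2/4$ (equivalently $2s>r$) yields the threshold $b/a>5/12$. Your explicit identification of $5/12$ as a triangle-realizability condition rather than a solvability condition is a nice clarification that the paper leaves implicit.
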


\begin{proof}
See Appendix~\ref{triangle-proof}.
\end{proof}

The geometry of the triangle is given by
\begin{eqnarray*}
  \frac{s}{r}=\left(\frac{3}{2}   \frac{b-\frac{a}{3}}{a} \right)^{1/3}=\left(\frac{3}{2}\left(\frac{b}{a}-\frac{1}{3}\right) \right)^{1/3}.
\end{eqnarray*}

From Theorem~\ref{thm:iso-triangle} we obtain two isosceles triangles: one is obtained from the other by the reflection with respect to \OY.
If   $\frac{12}{5} > \frac{b}{a} > \frac{5}{12}$ then from Theorem~\ref{thm:iso-triangle} follows that we have two isosceles triangles with the reflectional symmetry with respect to \OX\ and
two other  isosceles triangles with the reflectional symmetry with respect to \OY\ (see Figure~\ref{fig:triangles}).
For $\frac{b}{a}=\frac{5}{12}$ the isosceles triangle becomes singular and becomes the collinear solution on the \OY.

\begin{minipage}[h]{.95\textwidth}
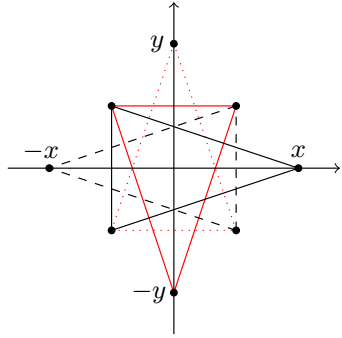
\begin{figure}[H]
  \centering
\resizebox{0.3\textwidth}{!}{%
\begin{tikzpicture}
\draw[thin, red] (0.75, 0.75) -- (0.0, -1.5);  
\draw[thin, red] (-0.75, 0.75) -- (0.0, -1.5);  
\draw[thin, red] (-0.75, 0.75) -- (0.75, 0.75);  

\draw[fill] (0.0, -1.5) circle (0.4mm);   // p2

\node[] at (-0.3, -1.5) {\small $-y$};

\draw[dotted, red] (0.75, -0.75) -- (0.0, 1.5);  
\draw[dotted, red] (-0.75, -0.75) -- (0.0, 1.5);  
\draw[dotted, red] (-0.75, -0.75) -- (0.75, -0.75);  

\draw[fill] (0.0, 1.5) circle (0.4mm);   // p2

\node[] at (-0.2, 1.5) {\small $y$};

\draw[fill] (1.5, 0.0) circle (0.4mm);   // p2
\draw[fill] (-0.75, 0.75) circle (0.4mm); // p3
\draw[fill] (-0.75, -0.75) circle (0.4mm);  // p4

\node[] at (1.5, 0.2) {\small $x$};

\draw[] (-0.75, 0.75) -- (1.5, 0.0);  
\draw[] (-0.75, -0.75) -- (1.5, 0.0);  
\draw[] (-0.75, -0.75) -- (-0.75, 0.75);  

\draw[fill] (-1.5, 0.0) circle (0.4mm);   // p2
\draw[fill] (0.75, 0.75) circle (0.4mm); // p3
\draw[fill] (0.75, -0.75) circle (0.4mm);  // p4

\node[] at (-1.6, 0.2) {\small $-x$};

\draw[dashed] (0.75, 0.75) -- (-1.5, 0.0);  
\draw[dashed] (0.75, -0.75) -- (-1.5, 0.0);  
\draw[dashed] (0.75, -0.75) -- (0.75, 0.75);  

\draw[->, thin] (-2.0, 0.0) -- (2.0, 0.0);
\draw[->, thin] (0.0, -2.0) -- (0.0, 2.0);

\end{tikzpicture}
}
\caption{Orientations of isosceles triangles: two black for $b/a > 5/12$ and two red for $a/b > 5/12$.}\label{fig:triangles}
\end{figure}
\end{minipage}

\subsection{$k=4$ with equal masses}

For $k=4$  by the Moulton Theorem we have $2 \cdot 4!$ collinear solutions contained in  coordinate axes. From  Theorem~\ref{thm:coll-on-coord-axis} we see that there are no other collinear solutions.

 We can analytically prove the existence of two types of non-collinear solutions for equal mass case: a rhombus and a rectangle, both symmetrical with respect to $x$ and $y$ axes. There exist other types of solutions, see  Section~\ref{subsubsec:full-list} for the computer assisted proof of full list
 for equal mass case and $a=3/4$ and $b=9/4$.

\subsubsection{Rhombus}

\begin{theorem}\label{thm:rhombus}
Assume $a,b >0$.
Assume that $\sum_{i= 1}^4 \mu_i = 1$ and $\mu_i = \mu$ for $i = 1, 2, 3, 4$.
Then there exists a solution of~(\ref{eqn:a-b})
which forms a rhombus (consisting from points $(\pm x,0),(0,\pm y)$, for some positive $x,y$) symmetrical with respect to $x$ and $y$ axes.
Moreover, if we denote by $k=\frac{y}{x}$ the ratio of diagonals  and by $r$ the length of the sides, then
\begin{itemize}
\item $k$  is a unique solution to
\begin{equation}\label{eqn:k}
\frac{8k^3 + (k\sqrt{1 + k^2})^3}{8k^3 + (\sqrt{1 + k^2})^3} = \frac{a}{b}
\end{equation}
\item  $r > \displaystyle{\left(\frac{9\mu}{4\min\{a, b\}}\right)^{1/3}}$
\item  in the above range, $r$ is a unique solution to
\begin{equation}\label{eqn:m}
\left(\frac{2\mu}{ar^3 - 2\mu} \right)^{\frac{2}{3}} +  \left(\frac{2\mu}{br^3 - 2\mu} \right)^{\frac{2}{3}} = 4.
\end{equation}
\end{itemize}
\end{theorem}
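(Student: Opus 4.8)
The plan is to impose the rhombus ansatz
\[
p_1=(x,0),\qquad p_2=(-x,0),\qquad p_3=(0,y),\qquad p_4=(0,-y),\qquad x,y>0,
\]
and reduce (\ref{eqn:a-b}) to two scalar equations. First I would record the mutual distances $r_{12}=2x$, $r_{34}=2y$ and $r_{13}=r_{14}=r_{23}=r_{24}=\rho$ with $\rho=\sqrt{x^2+y^2}$. The configuration is carried to itself (with $p_1\leftrightarrow p_2$, resp.\ $p_3\leftrightarrow p_4$) by the reflection $(x,y)\mapsto(-x,y)$, resp.\ $(x,y)\mapsto(x,-y)$, and (\ref{eqn:a-b}) is equivariant under reflections in the coordinate axes, so it suffices to write the equations for $p_1$ and $p_3$. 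A direct substitution shows that the $y$-component of the $p_1$-equation and the $x$-component of the $p_3$-equation are identically $0=0$, so with $\mu_i=\mu$ the whole system collapses to
\begin{align}
a &= \frac{\mu}{4x^3}+\frac{2\mu}{(x^2+y^2)^{3/2}}, \label{pp:I}\\
b &= \frac{\mu}{4y^3}+\frac{2\mu}{(x^2+y^2)^{3/2}}. \label{pp:II}
\end{align}

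Next I would extract the two stated characterizations from (\ref{pp:I})--(\ref{pp:II}). Setting $k=y/x$, pulling a factor $1/x^3$ out of each equation and dividing (\ref{pp:I}) by (\ref{pp:II}) eliminates $x$; clearing denominators yields exactly (\ref{eqn:k}). Separately, writing $r=\rho$ so that $(x^2+y^2)^{3/2}=r^3$, equations (\ref{pp:I})--(\ref{pp:II}) give
\[
x^3=\frac{\mu r^3}{4(ar^3-2\mu)},\qquad y^3=\frac{\mu r^3}{4(br^3-2\mu)},
\]
which forces $ar^3>2\mu$ and $br^3>2\mu$; substituting these into the identity $x^2+y^2=(x^3)^{2/3}+(y^3)^{2/3}=r^2$ and simplifying produces precisely (\ref{eqn:m}). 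Conversely, any $r$ with $ar^3>2\mu$, $br^3>2\mu$ solving (\ref{eqn:m}) yields, through these formulas, positive $x,y$ satisfying (\ref{pp:I})--(\ref{pp:II}); by the equivariance above this is a genuine solution of (\ref{eqn:a-b}) of rhombus shape, and its diagonal ratio $k=y/x$ then automatically solves (\ref{eqn:k}), that equation being nothing but (\ref{pp:I})/(\ref{pp:II}). Finally, at such an $r$ each of the two (positive) summands on the left of (\ref{eqn:m}) is $<4$, hence $\frac{2\mu}{ar^3-2\mu}<8$ and $\frac{2\mu}{br^3-2\mu}<8$, which give $r^3>\frac{9\mu}{4a}$ and $r^3>\frac{9\mu}{4b}$, i.e.\ $r>\left(\frac{9\mu}{4\min\{a,b\}}\right)^{1/3}$.

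It remains to prove the two uniqueness assertions, which is where the actual work lies. For (\ref{eqn:m}): on the natural domain $r^3>\frac{2\mu}{\min\{a,b\}}$ (which contains the range $r>(9\mu/(4\min\{a,b\}))^{1/3}$ because $9/4>2$) the left-hand side of (\ref{eqn:m}) is a sum of two strictly decreasing continuous functions of $r$, with limit $+\infty$ as $r^3$ decreases to the left endpoint and limit $0$ as $r\to\infty$; hence it equals $4$ at exactly one $r$, which by the previous paragraph already lies in the claimed range. For (\ref{eqn:k}): I would show that
\[
\phi(k):=\frac{8k^3+(k\sqrt{1+k^2})^3}{8k^3+(\sqrt{1+k^2})^3}
\]
is a strictly increasing bijection of $(0,\infty)$ onto $(0,\infty)$. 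One has $\phi(1)=1$, $\phi(k)\to 0$ as $k\to 0$, $\phi(k)\to\infty$ as $k\to\infty$, and the symmetry $\phi(1/k)=1/\phi(k)$ (which is the reflection $x\leftrightarrow y$, $a\leftrightarrow b$), so monotonicity need only be checked on $[1,\infty)$. There, writing $u=(1+k^2)^{3/2}$ and $\phi=\frac{k^3(8+u)}{8k^3+u}$, a direct differentiation shows the numerator of $\phi'$ equals $3k^2u(8+u)+8k^3u'(k^3-1)$ with $u'=3k\sqrt{1+k^2}>0$, which is manifestly positive for $k\ge 1$. Consequently $\phi(k)=a/b$ has a unique positive root.

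Putting the pieces together: (\ref{eqn:m}) determines $r$ uniquely in the stated range, the displayed formulas then determine positive $x,y$ uniquely, and this produces the asserted rhombus solution of (\ref{eqn:a-b}) with side length $r$ and diagonal ratio $k$ equal to the unique root of (\ref{eqn:k}). I expect the main obstacle to be the monotonicity of $\phi$ behind the uniqueness of $k$ (the calculus estimate above); everything else is elementary once the reduction to (\ref{pp:I})--(\ref{pp:II}) is in place.
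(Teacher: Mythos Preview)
Your proof is correct and follows essentially the same route as the paper: impose the rhombus ansatz, reduce (\ref{eqn:a-b}) to the two scalar equations (\ref{pp:I})--(\ref{pp:II}), divide them to obtain (\ref{eqn:k}), and use $x^2+y^2=r^2$ to obtain (\ref{eqn:m}). Two minor differences worth noting: the paper derives the lower bound $r^3>9\mu/(4\min\{a,b\})$ directly from (\ref{pp:I})--(\ref{pp:II}) via $x,y<r$, whereas you extract it a posteriori from (\ref{eqn:m}) by bounding each summand by $4$; and for the monotonicity of $\phi$ the paper simply asserts it (with a figure), while you actually supply a proof via the symmetry $\phi(1/k)=1/\phi(k)$ and the derivative computation on $[1,\infty)$, which is a genuine improvement in rigor.
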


\begin{proof}
See Appendix~\ref{rhombus-proof}.

\end{proof}

\subsubsection{Rectangle}

\begin{theorem}\label{thm:rectangle}
Assume $a,b >0$.
Assume that $\sum_{i= 1}^4 \mu_i = 1$ and $\mu_i = \mu$ for $i = 1, 2, 3, 4$.
Then there exists  solution of~(\ref{eqn:a-b})
which forms a rectangle (consisting of points of the form $(\pm x, \pm y)$ for some positive $x,y$)   symmetrical with respect to $x$ and $y$ axes. Moreover, if we denote by $d = 2r$ the diagonal and by $\varphi$ the angle between diagonal and the $x$ axis, then
\begin{itemize}
\item $\varphi$  is a unique solution to
\begin{equation}\label{eqn:phi}
\tan^3(\varphi) \frac{\cos^3(\varphi) + 1}{\sin^3(\varphi) + 1} = \frac{a}{b}
\end{equation}
\item  $r > \displaystyle{\left(\frac{\mu}{2\min\{a, b\}}\right)^{1/3}}$
\item  in the above range, $r$ is a unique solution to
\begin{equation}\label{eqn:mu}
\left(\frac{\mu}{4ar^3 - \mu} \right)^{\frac{2}{3}} + \left(\frac{\mu}{4br^3 - \mu} \right)^{\frac{2}{3}} = 1.
\end{equation}
\end{itemize}
\end{theorem}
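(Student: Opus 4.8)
Here is how I would prove Theorem~\ref{thm:rectangle}. The strategy is to impose the rectangular symmetry as an ansatz, use it to collapse the $2k=8$ scalar equations of~(\ref{eqn:a-b}) down to just two, and then solve those two explicitly; the argument runs parallel to the proof of Theorem~\ref{thm:rhombus}.

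\textbf{Reduction by symmetry.} Place the four bodies at $p_1=(x,y)$, $p_2=(-x,y)$, $p_3=(-x,-y)$, $p_4=(x,-y)$ with $x,y>0$, set $r=\sqrt{x^2+y^2}$ (so $d=2r$ is the diagonal), and define $\varphi\in(0,\pi/2)$ by $x=r\cos\varphi$, $y=r\sin\varphi$. The two coordinate reflections are symmetries of~(\ref{eqn:a-b}) with $\mu_i=\mu$ and they permute the four bodies (as $(12)(34)$ and $(14)(23)$); since our configuration is invariant under both, the equations for $p_2,p_3,p_4$ follow from the equation for $p_1$. The relevant distances are $r_{12}=2x$, $r_{14}=2y$, $r_{13}=2r$, so substituting $i=1$ into~(\ref{eqn:a-b}) and clearing denominators yields the pair
\begin{align*}
4ar^3\cos^3\varphi &= \mu\bigl(1+\cos^3\varphi\bigr),\\
4br^3\sin^3\varphi &= \mu\bigl(1+\sin^3\varphi\bigr).
\end{align*}

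\textbf{Solving the reduced system.} These two equations are equivalent to
\begin{equation*}
\cos^3\varphi=\frac{\mu}{4ar^3-\mu},\qquad \sin^3\varphi=\frac{\mu}{4br^3-\mu}.
\end{equation*}
Positivity of both right-hand sides forces $4ar^3>\mu$ and $4br^3>\mu$, and additionally requiring $\cos^3\varphi<1$ and $\sin^3\varphi<1$ (so that $\varphi\in(0,\pi/2)$, i.e.\ a genuine rectangle) is exactly $r^3>\mu/(2a)$ and $r^3>\mu/(2b)$, that is $r>(\mu/(2\min\{a,b\}))^{1/3}$. Substituting the two displayed formulas into $\cos^2\varphi+\sin^2\varphi=1$ produces~(\ref{eqn:mu}); dividing the two equations of the reduced system one by the other and rewriting via $\tan\varphi=\sin\varphi/\cos\varphi$ produces~(\ref{eqn:phi}). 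Conversely, any $r>(\mu/(2\min\{a,b\}))^{1/3}$ solving~(\ref{eqn:mu}) determines $\cos\varphi,\sin\varphi\in(0,1)$ through those formulas, hence a unique $\varphi\in(0,\pi/2)$, and then the four points $(\pm r\cos\varphi,\pm r\sin\varphi)$ solve~(\ref{eqn:a-b}).

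\textbf{Existence and uniqueness.} Assume without loss of generality $a\le b$ and let $\psi(r)$ denote the left-hand side of~(\ref{eqn:mu}). On the half-line $r>(\mu/(2a))^{1/3}$ each summand of $\psi$ is positive, continuous, and strictly decreasing, so $\psi$ is strictly decreasing; moreover $\psi\bigl((\mu/(2a))^{1/3}\bigr)=1+\bigl(a/(2b-a)\bigr)^{2/3}>1$ (using $2b-a\ge a>0$), while $\psi(r)\to0$ as $r\to\infty$. Hence~(\ref{eqn:mu}) has exactly one root in the admissible range, and for $r\le(\mu/(2\min\{a,b\}))^{1/3}$ one gets $\cos^3\varphi\ge1$, so no rectangle arises there. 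For the uniqueness of $\varphi$ as a solution of~(\ref{eqn:phi}) on $(0,\pi/2)$, substitute $u=\sin^3\varphi$, $v=\cos^3\varphi$: the left-hand side of~(\ref{eqn:phi}) equals $\dfrac{u}{u+1}\cdot\dfrac{v+1}{v}$, a product of two positive factors each strictly increasing in $\varphi$ (the first increasing in $u$, the second decreasing in $v$), so it increases strictly from $0$ to $+\infty$, giving a single root, namely the $\varphi$ built above. All the computations here are routine; the only point that needs care is this last step --- pinning down the domain on which~(\ref{eqn:mu}) is meaningful and checking that its unique root really lands in the admissible range $r>(\mu/(2\min\{a,b\}))^{1/3}$ rather than in the degenerate part, together with the $(u,v)$-substitution that makes the monotonicity of~(\ref{eqn:phi}) transparent.
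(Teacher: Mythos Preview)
Your proof is correct and follows essentially the same approach as the paper's: impose the rectangular ansatz, reduce~(\ref{eqn:a-b}) to the two scalar equations $a=\frac{\mu}{4}(r^{-3}+x^{-3})$ and $b=\frac{\mu}{4}(r^{-3}+y^{-3})$, divide to obtain~(\ref{eqn:phi}), and use $x^2+y^2=r^2$ to obtain~(\ref{eqn:mu}). Your treatment is in fact somewhat more careful than the appendix version --- the $(u,v)$-substitution makes the monotonicity of the left-hand side of~(\ref{eqn:phi}) transparent, and your explicit evaluation $\psi\bigl((\mu/(2a))^{1/3}\bigr)=1+(a/(2b-a))^{2/3}>1$ cleanly pins the unique root of~(\ref{eqn:mu}) inside the admissible range.
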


\begin{proof}
See Appendix~\ref{rectangle-proof}.
\end{proof}


\section{Computer assisted results for CCs with heavy and light bodies and corresponding  problems in anisotropic plane}
\label{sec:cap}

In this section we discuss and  compare results obtained from two programs, first is the program for central configurations (PGU)  described in~\cite{MZ} and the second program (PHU), which  solves the problem  defined in Section~\ref{sec:restr-cc-light-bodies}.

\subsection{PGU ---  about the program }
The program for rigorous computations of all CC for N-body problem for fixed set of masses $m_i$ is described in \cite{MZ}.  The program, if successful,  returns all central configurations. It solves rigorously the reduced system  \RS (discussed in Section~\ref{sec:red-sys}) and the necessary condition for the program to succeed is that all solutions of \RS are non-degenerate. 

On input, the program accepts a box in the configuration space of the reduced system \RS and then uses binary subdivision to produce boxes (covering the initial box)
which satisfy one of the following conditions:
\begin{itemize}
\item[(C1)] there is no solution in a box
\item[(C2)] there is exactly one solution (non-degenerate) in a box
\item[(C3)] cannot decide rigorously if (C1)  or (C2) is satisfied and the size of the box is below some threshold
\end{itemize}
The program always stops. If there are no undecided boxes, i.e.\  satisfying (C3), then we know that all normalized central configuration are in boxes satisfying (C2).

\subsection{PHU ---  about the program}

PHU  looks for all solutions of system of equations~(\ref{eqn:a-b}) for given $a,b$ and masses $\mu_i>0$ such that $\sum \mu_i=1$.
 This allows us to obtain \ccrbp{k} with the coefficients $a,b$ (see Section~\ref{sec:restr-cc-light-bodies}).

The basic idea of the program is the same as in the case of PGU: for a given initial box  we use the binary subdivision algorithm to produce a set of boxes satisfying
conditions (C1)--(C3) and covering the initial box. The initial box was chosen to contain all possible solutions according to the upper bound  on the size of solution from Theorem~\ref{thm:upp-bnd}.  If it turns out that there are no boxes satisfying (C3), then we obtained all solutions
of system of equations~(\ref{eqn:a-b}) and all solutions are non-degenerate.

In the program we use the following tests:
\begin{itemize}
\item using the Krawczyk operator (discussed in \cite{MZ}) we can: decide that a box satisfies (C1) or (C2) or we can reduce the size of the box
\item for boxes containing pairs of close bodies we check  the condition from Lemma~\ref{lm:lower-bnd}, where we obtained  lower bound for the distance between bodies. This test  allows us to remove boxes with very close bodies during the binary subdivision process.
\end{itemize}


\subsection{Two heavy and $k$-light bodies}
\label{subsec:CC-2+k}

We perform our computations for $(2+k)$-problem with $k=3,4$.  We are interested in the neighborhood of relative equilibria
in the restricted $3$-body problem. It is well known (see for example \cite{MD}) that in the restricted $3$-body problem we have five relative equilibria: three collinear  with the primaries (called $L_1,L_2,L_3$) and two triangular ones denoted by $L_4$ and $L_5$, which form an equilateral triangle with the primaries. $L_4$ and $L_5$ are mutually symmetric with respect to the line passing through the primaries.  $L_4$ and $L_5$ are mutually symmetric with respect to the line passing through the primaries. See Figure~\ref{fig:lagrange-points}.
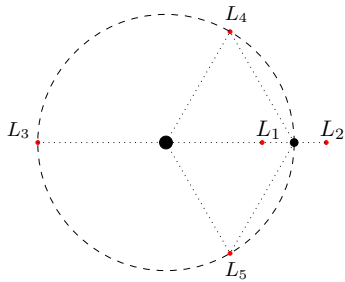
\begin{figure}[H]
  \centering
\resizebox{0.3\textwidth}{!}{%
\begin{tikzpicture}
\draw[fill] (1.5, 1.5) circle (1mm);   // h1
\draw[fill] (3.5, 1.5) circle (0.6mm);   // h2

\draw[fill, red] (3.0, 1.5) circle (0.3mm);   // L1
\node[] at (3.1, 1.7) {$L_1$};

\draw[fill, red] (4.0, 1.5) circle (0.3mm);   // L2
\node[] at (4.1, 1.7) {$L_2$};

\draw[fill, red] (-0.5, 1.5) circle (0.3mm);   // L3
\node[] at (-0.8, 1.7) {\small $L_3$};

\draw[fill, red] (2.5, 3.23) circle (0.3mm);   // L4
\node[] at (2.6, 3.5) {\small $L_4$};
\draw[dotted] (1.5, 1.5) -- (2.5, 3.23);
\draw[dotted] (3.5, 1.5) -- (2.5, 3.23);

\draw[fill, red] (2.5, -0.23) circle (0.3mm);   // L5
\node[] at (2.6, -0.5) {\small $L_5$};
\draw[dotted] (1.5, 1.5) -- (2.5, -0.23);
\draw[dotted] (3.5, 1.5) -- (2.5, -0.23);

\draw[dashed] (1.5, 1.5) circle (2.0);   
\draw[dotted] (-0.5, 1.5) -- (4.0, 1.5);
\end{tikzpicture}
}
\caption{Two heavy bodies (primaries) are depicted as black dots. Light bodies (red) at the Lagrange points ($L_1, \ldots, L_5$) are in equilibrium.}\label{fig:lagrange-points}
\end{figure}  
It turns out that for collinear relative equilibria $L_i$, $i=1,2,3$ we have $a>0$ and $b<0$, which in view of Conclusion~\ref{thm:nosol} makes
the \ccrbp{k}  problem not interesting.
We focus on $L_4$ (for $L_5$ the situation is symmetric). In Lemma~\ref{lem:L4eigenVal-eq} we computed values of $a$ and $b$ for the equal masses
case. These are
\begin{equation}
   a=\frac{3}{4}, \qquad b=\frac{9}{4}.
\end{equation}
We will use them in the computations PHU computations reported below. For PHU computations we assume that we have
\begin{equation}
\sum_{i=1}^k \mu_i=1, \quad \mu_i=\frac{1}{k}.
\end{equation}

In this paper we run PGU with two heavy masses close to $1/2$  and $k$-light bodies with small masses, for $k=3,4$.
The heavy masses are located close to $(\pm 0.5,0)$ and the light bodies were located in box around $L_4$ 
(see  Appendix~\ref{sec:L4ab}).

We refer to the notation introduced in Section~\ref{sec:red-sys} and we obtain \RS in the following way. First, we set $n = N+k$.  Then we order bodies: $(q_1, \ldots, q_k)$ are light bodies, $q_{k+1} = q_{n-1}$ and $q_{k+2} = q_n$ are heavy bodies. We set $k_1= n-1$, this means that one heavy body is placed on the $x$ axis and the second one is computed from the center of mass condition~(\ref{eq:cc-kart-n-th}).

\subsubsection{A comparison of PGU and PHU}

By Theorem~\ref{thm:cont} we know that the non-degenerate relative equilibria of a restricted $(N+1)$-body problem  combined with non-degenerate solutions for $k$ bodies in the $W$-potential can be extended to solutions of $Q_{N+k}$ central configuration problem.  In the future work we hope
do rigorously this continuation, so that we will have a rigorous result for central configuration for an explicit range of small masses from zero
up to some macroscopic quantity, from which cc could be continued using other tools, like Krawczyk operator.

However, for a present moment, we will just compare the results from both programs. For this end we identify solutions from PHU  with those from PGU as follows.

Following the coordinate change from   Theorem~\ref{thm:cont}  (see (\ref{eq:m-tilde}, \ref{eq:q-norm}))  for solutions of PGU we compute
\begin{subequations}
\begin{align}
\tilde{p_i} & =  \frac{p_i - c}{\Theta^{1/3}}\label{eqn:coord-norm}\\
\tilde{\mu_i} & =  \frac{\mu_i}{\Theta},\label{eqn:mass-norm}
\end{align}\label{eqn:normalization}
\end{subequations}
where $\tilde{p_i}$ are normalized coordinates $p_i$ of small bodies obtained by PGU, $\mu_i$ --- masses of small bodies before normalization, $\Theta = \sum \mu_i$ and $c$ is a center of masses $\mu_i$.

Observe that in (\ref{eq:q-norm}) we have $x^*$ while in (\ref{eqn:coord-norm}) we have $c$ - the center of masses of light bodies.
However from Theorem~\ref{thm:cont}  it follows that
\begin{equation}
  c=x^* + \bigo(\Theta^{2/3}).
\end{equation}

The obtained values $\tilde{p}_i$ are then compared to those from PHU.  The agreement is quite satisfactory for the values of masses considered,
thus supporting our hope for the possibility of rigorous continuation.

In our discussion below we give approximate values, in fact we use midpoints of exact bounds obtained by our programs. The width of those bounds are very small, therefore it makes sense to using midpoints in comparison. For exact bounds refer to report files attached in the Appendix~\ref{app:reports}. 

 Below, we also present demonstrative pictures of solutions. Pictures are created in {\em Mathematica}~\cite{Mth} with midpoints of the intervals bounding the soultion. Labels of bodies in $ab$-potential are colored blue, in $(2+k)$-body problem -- red. The role of ordering and colors of bodies is only to illustrate the compatibility of solutions.

\subsection{Central configurations in $(2+k)$-body problem}
\label{sec:cc2k}

\subsubsection{$k=3$}
PGU is run for data with two big masses $m_1=m_2 = \frac{1}{2}\left(1- \frac{3}{4} \cdot 10^{-7}\right)$ and small masses  $\mu=1/4 \cdot 10^{-7}$, while PHU for masses $\tilde{\mu}=1/3$.  The initial boxes for light bodies in PGU are placed at $[-0.2, 0.2]\times [0.7, 1.0]$, first heavy body is placed at $[0.49, 0.65]\times 0$, second is computed by the center of mass.
In both programs PHU and PGU we obtained the same number of solutions and we were able to pair to the solutions from both programs as suggested
by Theorem~\ref{thm:cont}.

We obtained $4 \cdot 3!=24$ solutions,
\begin{itemize}
\item collinear on \OX, $3!$ solutions
\item collinear on \OY, $3!$ solutions
\item isosceles triangle symmetric with respect to \OX\ and its reflection with respect to \OY, $2 \cdot 3!$ solutions
\end{itemize}

\subsubsection*{Comparison of solutions}
\label{subsubsec:full-list-three}
\begin{itemize}
\item Figure~\ref{fig:coll3} displays horizontal and vertical collinear solutions.

\addtocounter{figure}{-1}  
\begin{figure}[H]
\centering
\begin{subfigure}{0.42\textwidth}
  \centering
\includegraphics[scale=0.7]{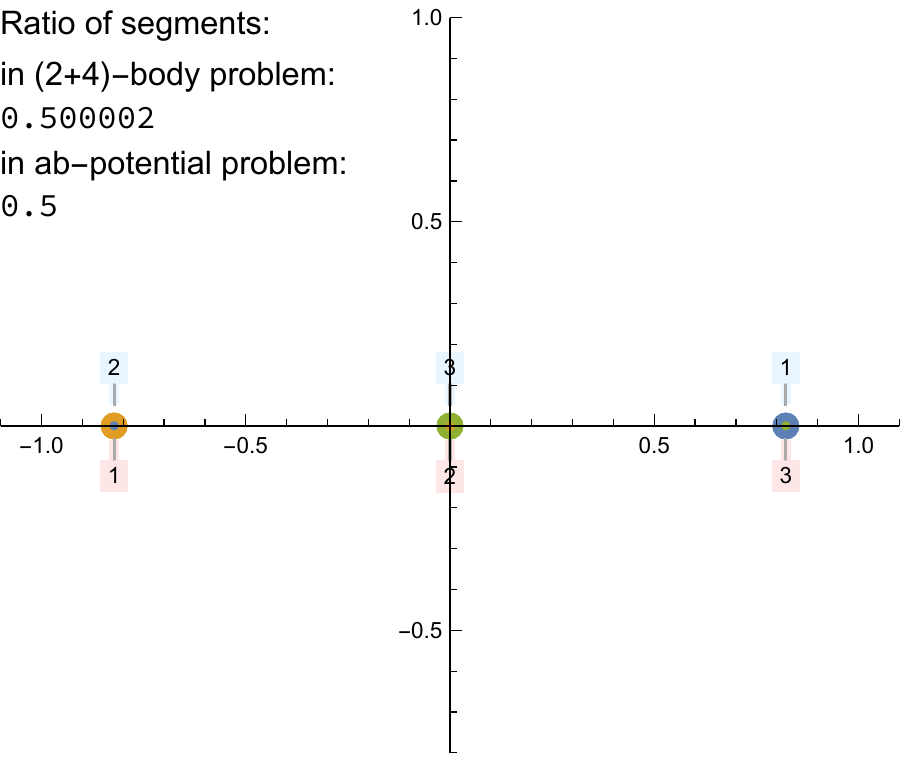}
\caption{Horizontal.}
\end{subfigure}%
\qquad
\begin{subfigure}{0.42\textwidth}
  \centering
\includegraphics[scale=0.7]{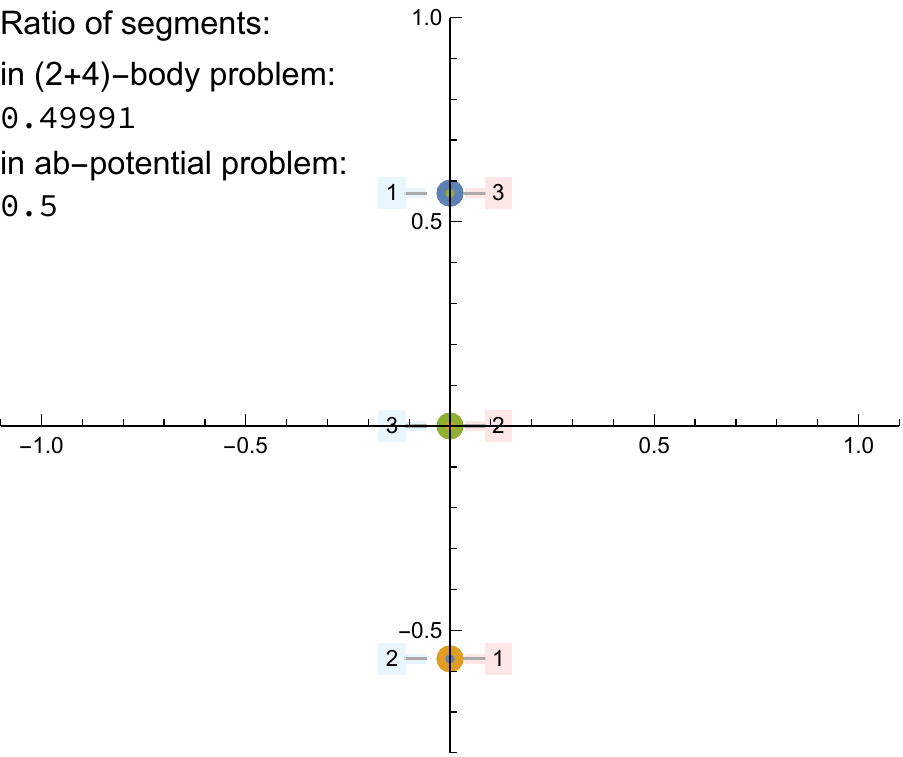}
 \caption{Vertical.}
\end{subfigure}
 \captionof{figure}{Collinear solutions.}\label{fig:coll3}
\end{figure}

Coordinates of the collinear solutions in the (2+3)-body problem, normalized by~(\ref{eqn:normalization})  and coordinates in $ab$-potential are given below:
{
\renewcommand{\arraystretch}{1.5}
$$
\begin{array}{m{1em}|c|rr|rr}
& q_i &\multicolumn{2}{c}{\mbox{PGU normalized coordinates}} & \multicolumn{2}{c}{\mbox{PHU coordinates (in $ab$-potential)}}\\
\hline
\multirow{2}{*}{\rotatebox{90}{horizontal~~}} & q_1 & (-0.822075, & 0.000685358) & (-0.8220706914, & 0.0000000000)\\
& q_2 & (0.822075, & 0.000685548) &  (0.8220706914, & 0.0000000000)\\
& q_3 & (-4.17619\times10^{-17}, & -0.00137091) & (0.0000000000, & -0.0000000000)\\
\hline
\multirow{2}{*}{\rotatebox{90}{vertical~~~}} & q_1 & (7.11379\times10^{-8}, & -0.569958) & (0.0000000000, & 0.5699919822)\\
& q_2 & (-7.11379\times10^{-8}, & 0.570128) & (-0.0000000000, & -0.5699919822)\\
& q_3 & (0.0, & -0.0000681896) & (0.0000000000, & 0.0000000000)\\
\end{array}
$$
}

\item Figure~\ref{fig:triangle3} displays the Isosceles triangle solutions found.

\addtocounter{figure}{-1}  
\begin{figure}[H]
\centering
\begin{subfigure}{0.42\textwidth}
  \centering
\includegraphics[scale=0.7]{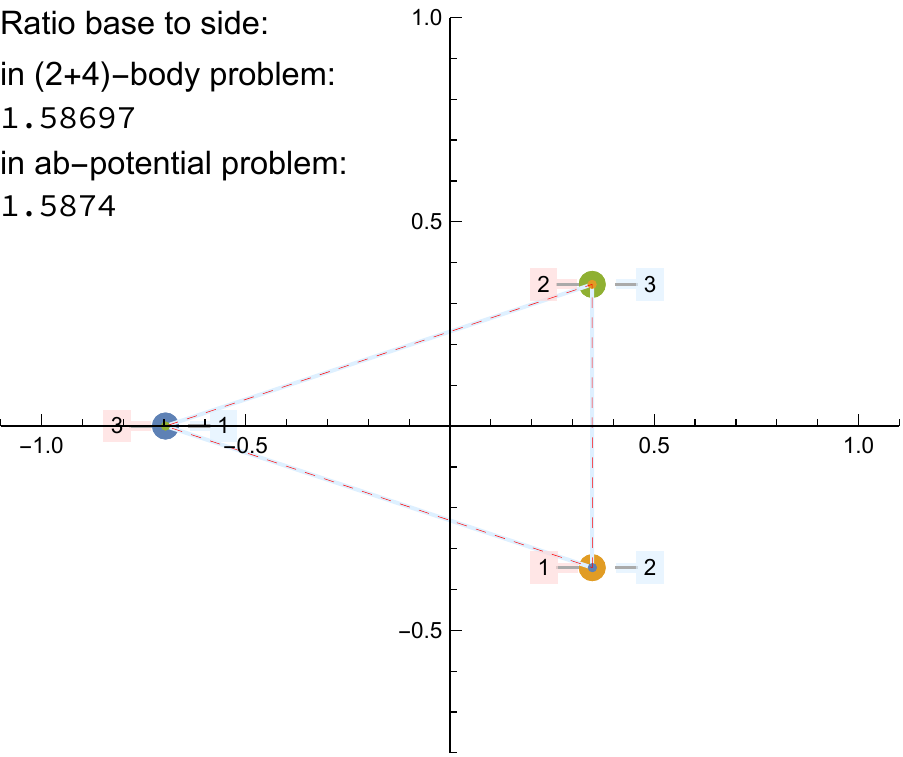}
\end{subfigure}%
\qquad
\qquad
\begin{subfigure}{0.42\textwidth}
  \centering
\includegraphics[scale=0.7]{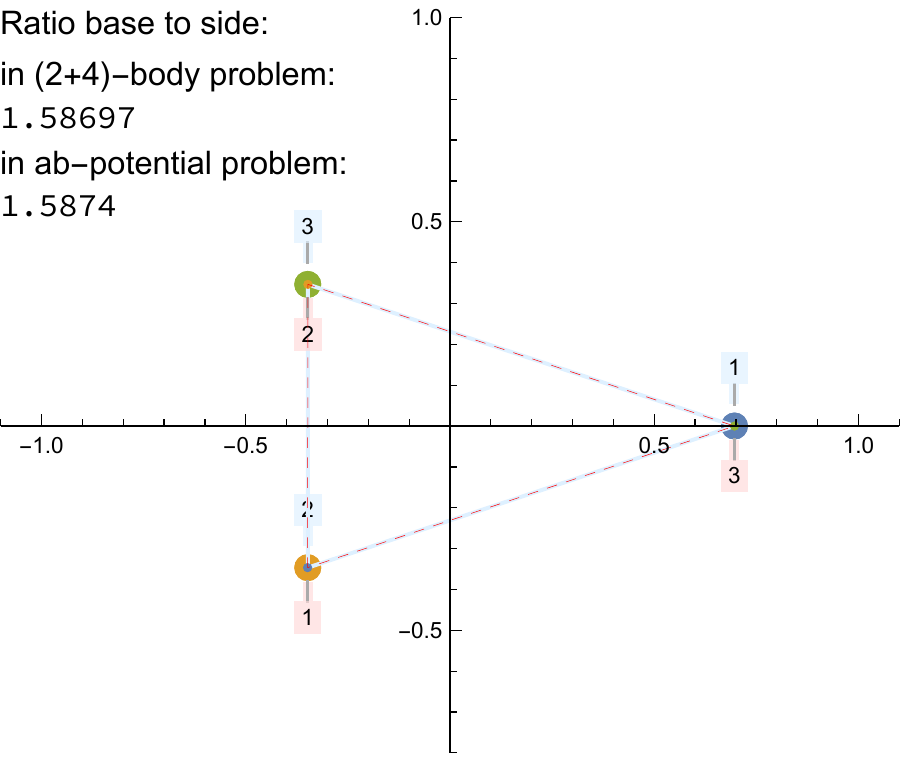}
\end{subfigure}
 \captionof{figure}{Triangle solutions.}\label{fig:triangle3}
\end{figure}

Coordinates of the isosceles triangle in the  (2+3)-body problem, normalized by~(\ref{eqn:normalization}) and coordinates in $ab$-potential are given below:
{
\renewcommand{\arraystretch}{1.5}
$$
\begin{array}{m{1em}|c|rr|rr}
& q_i &\multicolumn{2}{c}{\mbox{PGU normalized coordinates}} & \multicolumn{2}{c}{\mbox{PHU coordinates (in $ab$-potential)}}\\
\hline
\multirow{2}{*}{\rotatebox{90}{triangle~~~}} & q_1 & (-0.696414,& -0.000092914) & (-0.6964118400,& 0.0000000000)\\
 & q_2 & (0.347844,& 0.346726) & (0.3482059200, & 0.3466806372)\\
& q_3 & (0.34857,& -0.346634) & (0.3482059200,& -0.3466806372)
\end{array}
$$
}
\end{itemize}

\subsubsection{$k=4$}
PGU is run for data with two big masses being close to $\frac{1}{2}\left(1-2\cdot 10^{-5} \right)$ and four small masses  $\mu_i=5\cdot 10^{-6}$ for $i=1,\dots,4$; PHU for masses $\tilde{\mu}_i=1/4$ for $i=1,\dots,4$.   The initial boxes for light bodies in PGU are placed at $[-0.2, 0.2]\times [0.8, 0.9]$, first heavy body is placed at $[0.45, 0.55]\times 0$, second is computed by the center of mass.
In both programs PHU and PGU we obtained the same number of solutions and we were able to pair to the solutions from both programs as suggested
by Theorem~\ref{thm:cont}.

We obtained $10 \cdot 4!=240$ solutions,
\begin{itemize}
\item collinear on \OX, $4!$ solutions
\item collinear on \OY, $4!$ solutions
\item isosceles triangle with fourth point close to the base symmetric with respect to \OX\ and its reflection with respect to \OY\
  (see Fig.~\ref{fig:iso-triangles}), $2 \cdot 4!$ solutions
\item  equilateral triangle inside the isosceles one   symmetric with respect to \OX\ and its reflection with respect to \OY\
  (see Fig.~\ref{fig:iso-equi}), $2 \cdot 4!$ solutions
\item rhombus symmetric with respect to the $x$ and $y$ axes, (see Fig.~\ref{fig:rhombus}), $4!$ solutions
\item rectangle symmetric with respect to the $x$ and $y$ axes (see Fig.~\ref{fig:rectangle}), $4!$ solutions
\item `slanted' rhombus and its symmetric image coordinate axes (see Fig.~\ref{fig:slanted-rhombus}), $2\cdot 4!$  solutions
\end{itemize}

\subsubsection*{Comparison of solutions}
\label{subsubsec:full-list}
\begin{itemize}
\item horizontally and vertically collinear (two solutions)
\addtocounter{figure}{-1}  
\begin{figure}[H]
\centering
\begin{subfigure}{0.42\textwidth}
  \centering
\includegraphics[scale=0.7]{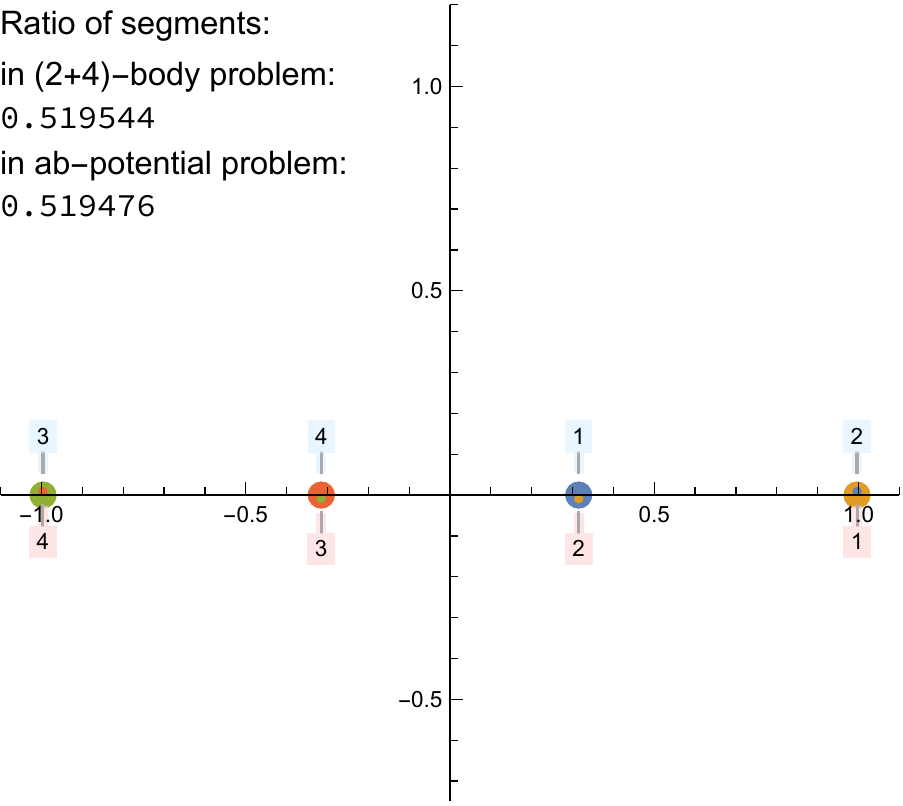}
\end{subfigure}%
\qquad
\begin{subfigure}{0.42\textwidth}
  \centering
\includegraphics[scale=0.7]{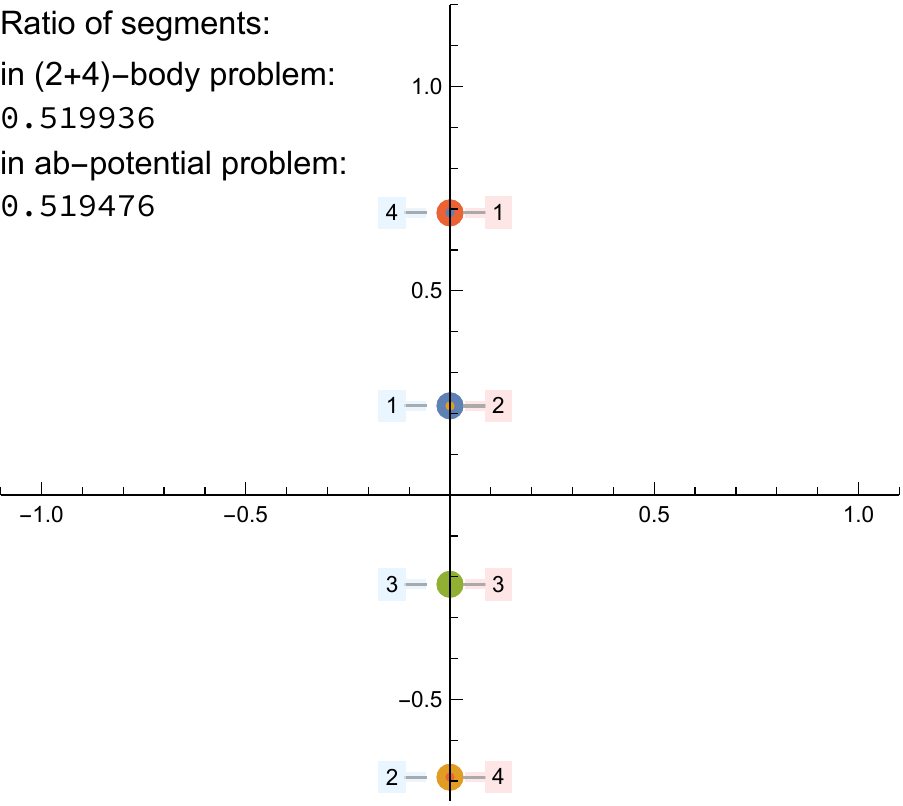}
\end{subfigure}
 \captionof{figure}{Collinear solutions.}\label{fig:coll4}
\end{figure}

\item  isosceles triangle
\addtocounter{figure}{-1}  
\begin{figure}[H]
\centering
\begin{subfigure}{0.42\textwidth}
  \centering
\includegraphics[scale=0.7]{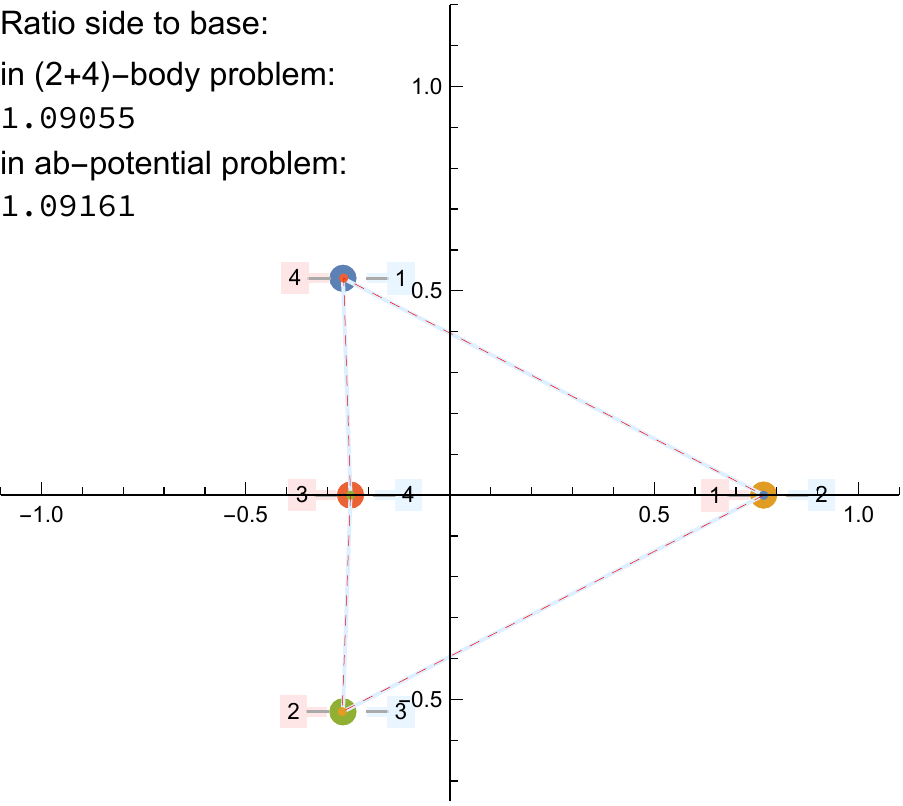}
\end{subfigure}%
\qquad
\begin{subfigure}{0.42\textwidth}
  \centering
\includegraphics[scale=0.7]{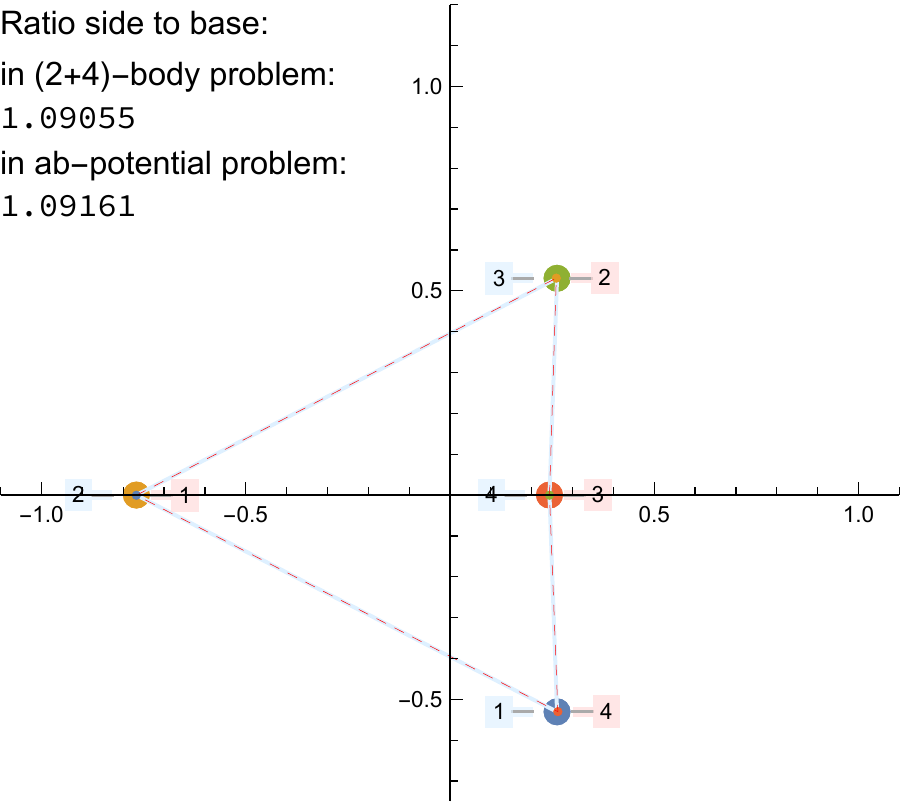}
\end{subfigure}
 \captionof{figure}{Isosceles triangles.}\label{fig:iso-triangles}
\end{figure}

\item equilateral triangle inside the isosceles one
\addtocounter{figure}{-1}  
\begin{figure}[H]
\centering
\begin{subfigure}{0.42\textwidth}
  \centering
\includegraphics[scale=0.7]{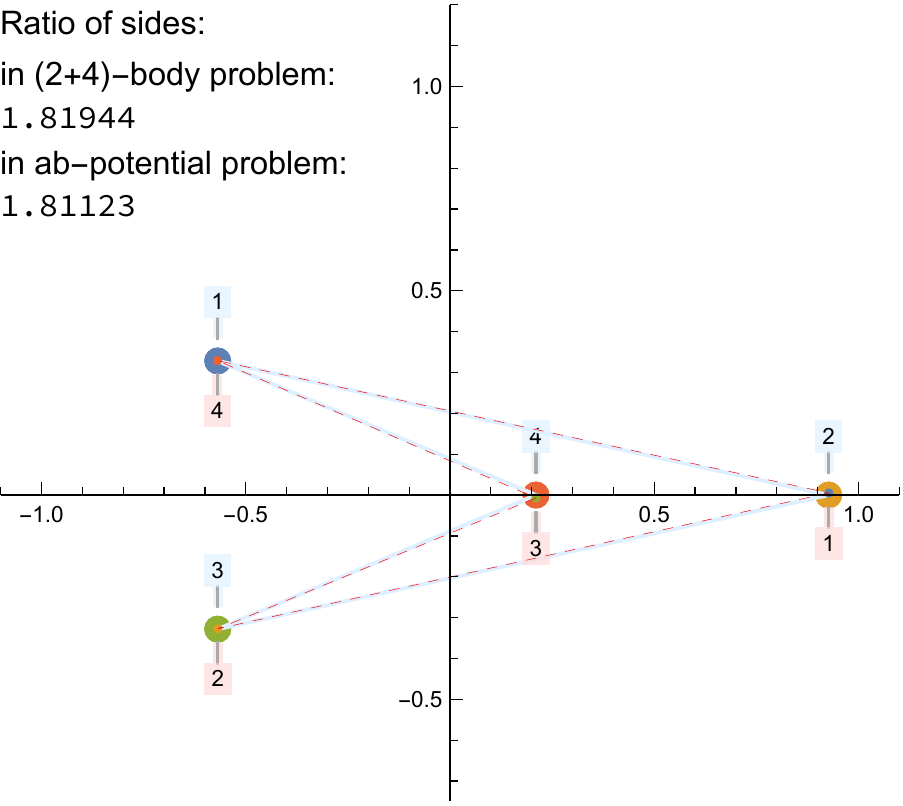}
\end{subfigure}%
\qquad
\begin{subfigure}{0.42\textwidth}
  \centering
\includegraphics[scale=0.7]{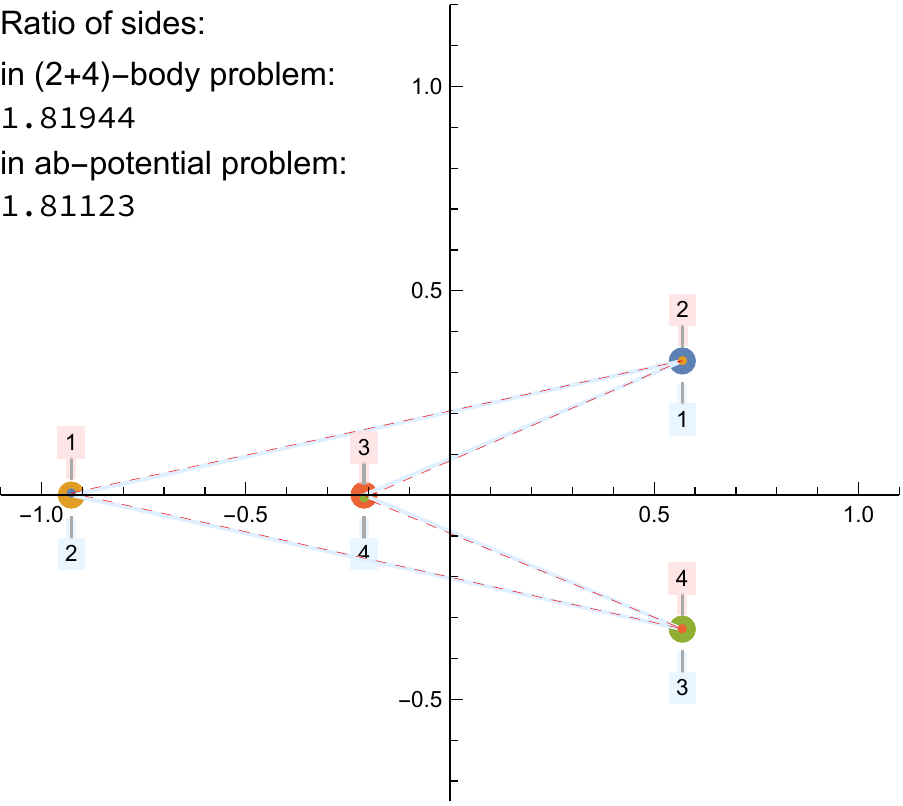}
\end{subfigure}
 \captionof{figure}{Equilateral triangle inside the isosceles one.}\label{fig:iso-equi}
\end{figure}

\item rhombus symmetric with respect to the $x$ and $y$ axes
\begin{figure}[H]
\centering
\includegraphics[scale=0.7]{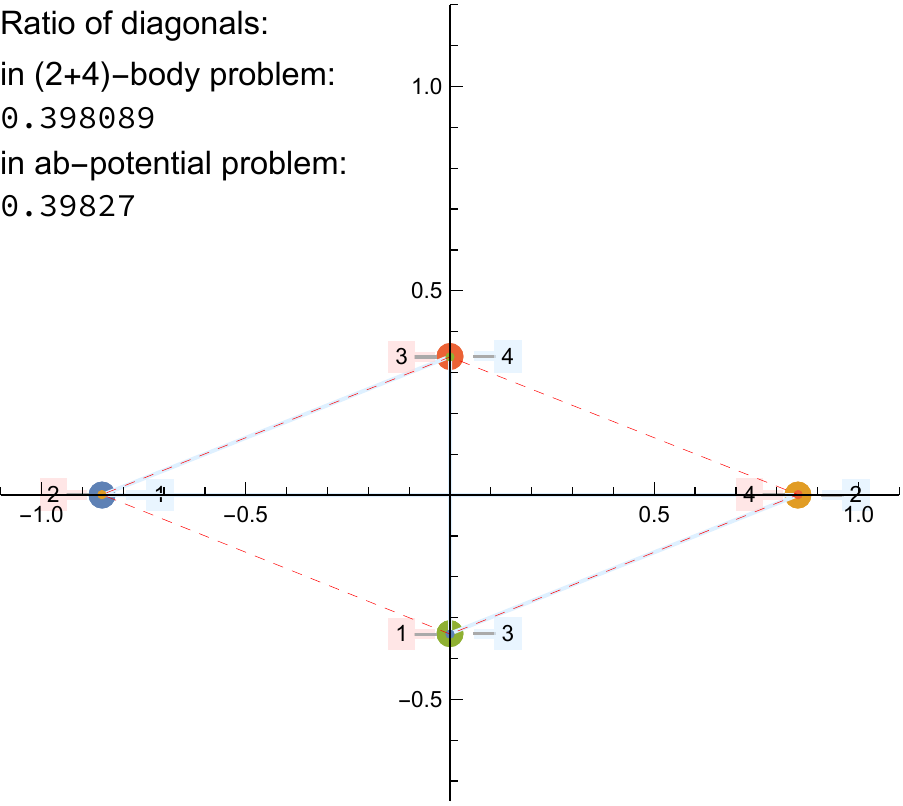}
 \captionof{figure}{Rhombus solution.}\label{fig:rhombus}
\end{figure}

\item rectangle symmetric with respect to the $x$ and $y$ axes
\begin{figure}[H]
\centering
\includegraphics[scale=0.7]{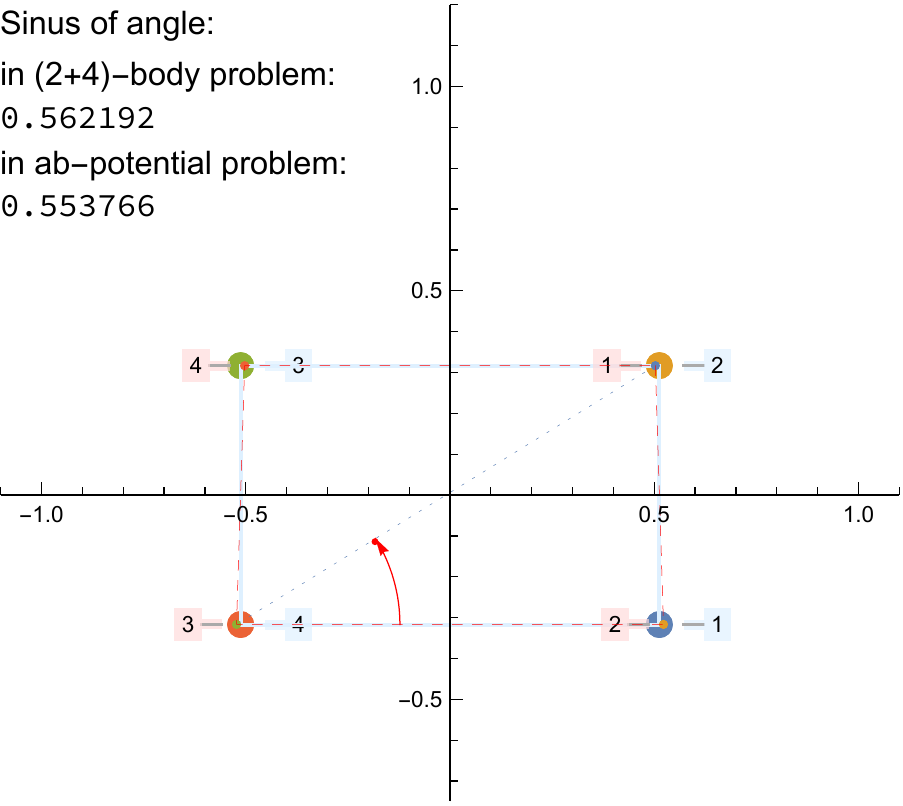}
 \captionof{figure}{Rectangle.}\label{fig:rectangle}
\end{figure}

\item `slanted' rhombus
\addtocounter{figure}{-1}  
\begin{figure}[H]
\centering
\begin{subfigure}{0.42\textwidth}
  \centering
\includegraphics[scale=0.7]{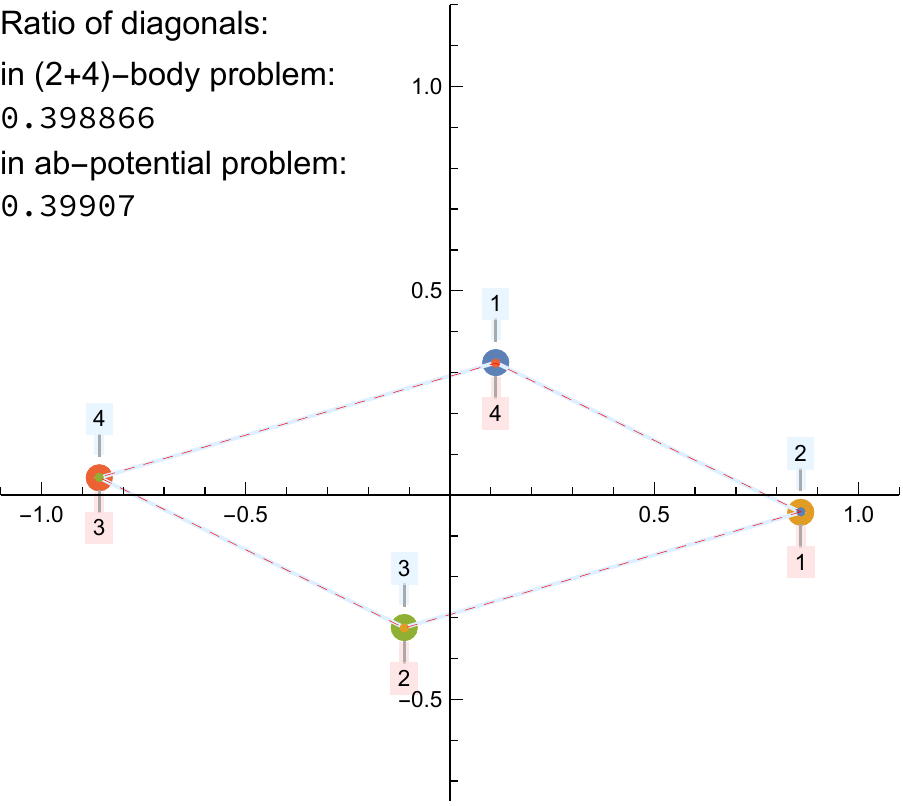}
\end{subfigure}%
\qquad
\begin{subfigure}{0.42\textwidth}
  \centering
\includegraphics[scale=0.7]{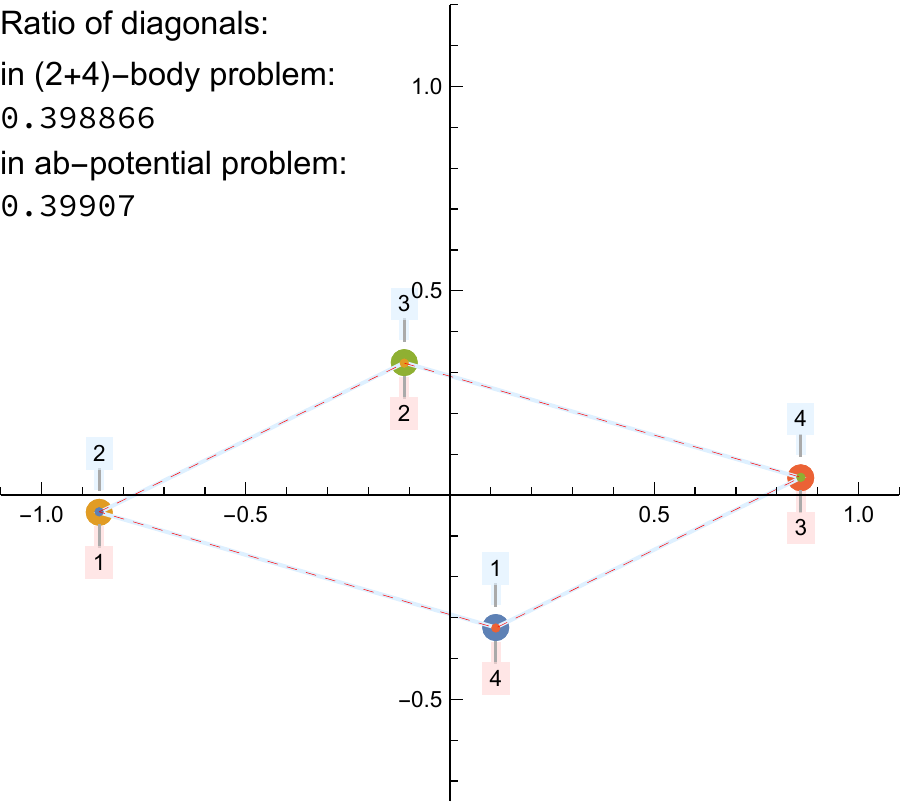}
\end{subfigure}
 \captionof{figure}{`Slanted' rhombuses.}\label{fig:slanted-rhombus}
\end{figure}

\end{itemize}

\subsubsection*{Detailed comparison for symmetrical rhombus}
From Theorem~\ref{thm:rhombus}
we obtain the ratio of diagonals $k = 0.39827$. From PGU
we get $k = 0.398089$ and from PHU
--- $k = 0.39827$.

Coordinates of the rhombus in the (2+4)-body problem, normalized by~(\ref{eqn:normalization}), and coordinates in $ab$-potential are given below:
{$$
\renewcommand{\arraystretch}{1.5}
\begin{array}{m{1em}|c|rr|rr}
& q_i &\multicolumn{2}{c}{\mbox{PGU normalized coordinates}} & \multicolumn{2}{c}{\mbox{PHU coordinates (in $ab$-potential)}}\\
\hline
& q_1 & (0.0000117898,&  -0.340306) & (0.0000000000, & -0.3392209571)\\
\multirow{2}{*}{\rotatebox{90}{rhombus}} & q_2 & (-0.852035,& 0.00109155) & (-0.8517357111,& -0.0000000000)\\
& q_3 & (-0.0000117106,& 0.338064) & (-0.0000000000,& 0.3392209571)\\
& q_4 & (0.852034,& 0.00115058) & (0.8517357111,& 0.0000000000)
\end{array}
$$}

\subsubsection*{Detailed comparison for rectangle}
From Theorem~\ref{thm:rectangle},
the angle between diagonal and \OX\ is $\varphi = 0.553766$. From PGU
we get $\varphi = 0.562192$ and from PHU
--- $\varphi = 0.553766$.

Normalized coordinates of the recatngle in the (2+4)-body problem and coordinates in $ab$-potential are given below:
{$$
\renewcommand{\arraystretch}{1.5}
\begin{array}{m{1em}|c|rr|rr}
& q_i &\multicolumn{2}{c|}{\mbox{PGU normalized coordinates}} &  \multicolumn{2}{c}{\mbox{PHU coordinates (in $a,b$-potential)}}\\
\hline
& q_1 & (0.502676, & 0.316689) &  (0.5124981464, & 0.3168767565)\\
\multirow{2}{*}{\rotatebox{90}{rectangle}} & q_2 & (0.522598, & -0.316654) & (0.5124981464,  &-0.3168767565)\\
& q_3 & (-0.522576, & -0.31669) & (-0.5124981464, & -0.3168767565)\\
& q_4 & (-0.502698, & 0.316654) & (-0.5124981464, & 0.3168767565)
\end{array}
$$}
In fact, in PGU the solution is a trapeziod, very close to being rectanglar.


\appendix
\section{Technical proofs omitted in the main paper}\label{proofs}
\subsection{Proof of Lemma~\ref{lem:ccab-mofin}}\label{inertia-proof}
We show the identity~(\ref{eqn:ax}) --- the proof of (\ref{eqn:by}) is analogous.\\[1ex]

Notice that
$$
\begin{array}{lclp{7cm}}
x_i^2 & = & x_i\left(x_i - \sum_j \mu_jx_j \right) & \quad\quad (since (\ref{eq:cc-cofmass}) implies $\sum_i\mu_ix_i = 0$)\\
  & = & x_i\left(x_i\sum_j \mu_j - \sum_j \mu_jx_j \right) & \quad\quad (use (\ref{eq:sum-mui}))\\
  & = & x_i\left(\sum_j \mu_j(x_i - x_j) \right) &
\end{array}
$$

Now we obtain
\begin{eqnarray*}
\sum_i \mu_ix_i^2
& = & \sum_i\sum_j \mu_i\mu_j(x_i^2 - x_ix_j) \\
  & = & \sum_{i<j} \mu_i\mu_j(x_i^2 - 2x_ix_j + x_j^2) \\
  & = & \sum_{i<j} \mu_i\mu_j(x_i - x_j)^2
  \end{eqnarray*}

On the other hand
\begin{eqnarray*}
\sum_i \mu_iax_i^2 & = &  \sum_{i} \sum_{j, j\neq i}\frac{\mu_i\mu_j(x_i - x_j)x_i}{r_{ij}^3} \quad\quad  (\mbox{by}~(\ref{eqn:a-b}))\\
& = & \sum_{i<j} \frac{\mu_i\mu_j}{r_{ij}^3}(x_i - x_j)^2
\end{eqnarray*}

From~(\ref{eqn:ax}, \ref{eqn:by}) the equation~(\ref{eq:aplusb}) is obvious:
\begin{eqnarray*}
a\left(\sum_i \mu_ix_i^2 \right) + b\left(\sum_i \mu_iy_i^2 \right) & = & \sum_{i<j} \frac{\mu_i\mu_j}{r_{ij}^3}(x_i - x_j)^2  + \sum_{i<j} \frac{\mu_i\mu_j}{r_{ij}^3}(y_i - y_j)^2 \\
& = & \sum_{i<j} \frac{\mu_i\mu_j}{r_{ij}}
\end{eqnarray*}
\qed

\subsection{Proof of Lemma~\ref{lm:lower-bnd}}\label{lower-bound-proof}
From Lemma~\ref{lem:ccab-mofin} we have
\begin{eqnarray*}
\max\{a, b\}\sum_i \mu_i(x_i^2 + y_i^2) & \geq &
a\left(\sum_i \mu_ix_i^2 \right) + b\left(\sum_i \mu_iy_i^2 \right)\\
& = & \sum_{i<j} \frac{\mu_i\mu_j}{r_{ij}} >  \frac{\mu_i\mu_j}{r_{ij}}
\end{eqnarray*}
From the above using (\ref{eq:sum-mui}) we obtain
\begin{eqnarray*}
r_{ij} & > & \frac{\mu_i\mu_j}{\max\{a, b\}\sum_i \mu_i(x_i^2 + y_i^2)}\\
\\
 & > & \frac{\mu_i\mu_j}{\max\{a, b\}R^2\sum_i \mu_i}\\
\\
 & = & \frac{\mu_i\mu_j}{\max\{a, b\}R^2}.
\end{eqnarray*}

\qed

\subsection{Proof of Theorem~\ref{thm:upp-bnd}}\label{upp-thm-proof}
For  the proof of Theorem~\ref{thm:upp-bnd} we will need the following result.

\begin{lemma}
\label{lem:upp-bnd}
Let
\begin{equation}
  M=\sum_i \mu_i.
\end{equation}
Assume that $a>0$ or $b>0$.
Assume $\pkm$ is a solution of~(\ref{eqn:a-b}).
Let  $R=|x_{i_0}|=\max_{i} |x_i|$. If $a>0$, then  for all  $\varepsilon  \in \left(0,R/(n-1)\right)$ holds
\begin{equation}
  R - (n-2) \varepsilon  < \frac{M}{a \varepsilon ^2}.  \label{eq:x-max-dist}
\end{equation}

Analogously, let  $R=|y_{i_0}|=\max_{i} |y_i|$. If $b>0, then$  for all  $\varepsilon  \in \left(0,R/(n-1)\right)$ holds
\begin{equation}
  R - (n-2) \varepsilon  < \frac{M}{b \varepsilon ^2}. \label{eq:y-max-dist}
\end{equation}
\end{lemma}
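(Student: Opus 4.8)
The plan is to work with the first family of equations in~(\ref{eqn:a-b}) (the $x$--components), divide the $i$-th one by $\mu_i$, and combine a sorting of the $x$--coordinates with the center--of--mass relation~(\ref{eq:cc-cofmass}). By replacing the configuration with its mirror image $x_i\mapsto -x_i$ (again a solution for the same masses, with the same set $\{|x_i|\}$) we may assume $x_{i_0}=R=\max_i x_i>0$; if $R=0$ the hypothesis $\varepsilon\in(0,R/(n-1))$ is empty and there is nothing to prove.

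First I would relabel the bodies by decreasing $x$--coordinate, $R=x^{(1)}\ge x^{(2)}\ge\cdots\ge x^{(n)}$. Since $\sum_i\mu_i x_i=0$ we have $x^{(n)}\le 0$, and as $\varepsilon<R/(n-1)$ this gives $x^{(n)}\le 0<R-(n-1)\varepsilon$. Hence the smallest index $j^*\ge 2$ with $x^{(j^*)}\le R-(j^*-1)\varepsilon$ exists and satisfies $2\le j^*\le n$. Put $A=\{\text{bodies of ranks }1,\dots,j^*-1\}$ and $B=\{\text{the rest}\}$. By minimality of $j^*$ every body in $A$ has $x$--coordinate $\ge R-(j^*-2)\varepsilon\ge R-(n-2)\varepsilon$, whereas every body in $B$ has $x$--coordinate $\le x^{(j^*)}\le R-(j^*-1)\varepsilon$; a short check (including the boundary case $j^*=2$, where $x^{(j^*-1)}=R$) shows $x_i-x_l\ge\varepsilon$ for all $i\in A$, $l\in B$.

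Next I would sum the $x$--equations $\mu_i a x_i=\sum_{j\ne i}\mu_i\mu_j(x_i-x_j)/r_{ij}^3$ over $i\in A$. The interactions internal to $A$ cancel by antisymmetry, leaving
\[
a\sum_{i\in A}\mu_i x_i=\sum_{i\in A}\sum_{l\in B}\frac{\mu_i\mu_l(x_i-x_l)}{r_{il}^3}.
\]
On the right, for $i\in A$, $l\in B$ we have $0<x_i-x_l\le r_{il}$ and $r_{il}\ge x_i-x_l\ge\varepsilon$, so each summand is $\le \mu_i\mu_l/\varepsilon^2$ and the whole right side is $\le \bigl(\sum_{i\in A}\mu_i\bigr)\bigl(\sum_{l\in B}\mu_l\bigr)/\varepsilon^2$. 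On the left, each $x_i$ with $i\in A$ is $\ge R-(n-2)\varepsilon$, so the left side is $\ge a\bigl(R-(n-2)\varepsilon\bigr)\sum_{i\in A}\mu_i$. Dividing by $\sum_{i\in A}\mu_i>0$ and using $\sum_{l\in B}\mu_l=M-\sum_{i\in A}\mu_i<M$ gives $a\bigl(R-(n-2)\varepsilon\bigr)<M/\varepsilon^2$, which is~(\ref{eq:x-max-dist}); the proof of~(\ref{eq:y-max-dist}) is word for word the same with $b$, the $y$--components, and $\max_i y_i$.

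The only step requiring a genuine idea is the construction of the cluster $A$: it must simultaneously be separated from its complement by a horizontal gap of at least $\varepsilon$ (so the cross--interaction terms are controlled by $1/\varepsilon^2$) and consist only of bodies lying within horizontal distance $(n-2)\varepsilon$ of the extreme body (so that summing the equations returns $R-(n-2)\varepsilon$ on the left). The existence of such an $A$ is a pigeonhole statement about the sorted $x$--coordinates, available precisely because $\varepsilon<R/(n-1)$ forces $x^{(n)}\le 0<R-(n-1)\varepsilon$; the antisymmetric cancellation of the forces internal to $A$ is what keeps the cross--term estimate clean and avoids any need to control how close bodies inside the cluster may be to one another.
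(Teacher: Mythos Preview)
Your proof is correct and follows the same overall strategy as the paper: isolate a cluster around the extreme body that is (i) separated from its complement by at least $\varepsilon$ and (ii) contained within horizontal distance $(n-2)\varepsilon$ of the extreme point, then sum the $x$--equations of~(\ref{eqn:a-b}) over the cluster, let the internal forces cancel by antisymmetry, and bound the cross terms by $M/\varepsilon^2$.

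The one genuine difference is the construction of the cluster. The paper grows a set $\mathcal{C}$ from $p_{i_0}$ by iteratively adjoining any body within full Euclidean distance $\varepsilon$ of the current set; it then observes that every $i\in\mathcal{C}$ has $x_i>0$ (since $R>(n-1)\varepsilon$), so by the center--of--mass condition $\mathcal{C}$ cannot be everything, hence the growth halts in at most $n-2$ steps and $\mathcal{C}\subset\overline{B(p_{i_0},(n-2)\varepsilon)}$. You instead sort the $x$--coordinates and use a pigeonhole argument on the thresholds $R,\,R-\varepsilon,\,R-2\varepsilon,\dots$ to locate the first horizontal gap of width $\ge\varepsilon$, taking $A$ to be the bodies above that gap. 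Your construction is a little more direct, since it works purely with the one coordinate that actually enters the estimate and gives the separation $r_{il}\ge x_i-x_l\ge\varepsilon$ immediately; the paper's construction is more geometric and would transfer verbatim to other components or to higher dimension. Both yield exactly the same inequalities in the final step.
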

\begin{proof}
Let us fix any $\varepsilon \in \left(0,R/(n-1)\right)$.
We can assume that
\begin{equation}
p_{i_0}=(R,y_{i_0}).  \label{eq:maxqi-on-OX}
\end{equation}
Let $\mathcal{C}$\index{$\mathcal{C}$} be a minimal subset (cluster)\index{cluster} of indices of bodies satisfying the following conditions
\begin{itemize}
\item $i_0 \in \mathcal{C}$
\item if $j \in \mathcal{C}$ and $|p_k - p_j| \leqslant  \varepsilon $, then $k \in \mathcal{C}$
\end{itemize}
The cluster $\mathcal{C}$ can be constructed as follows: We  start with $i_0 \in \mathcal{C}$. Then we add all bodies which are not farther than $\varepsilon $
from the bodies already in $\mathcal{C}$. We repeat this until the set $\mathcal{C}$ stabilizes, which happens after at most $n-1$ steps.
From  assumption about $\varepsilon$ and $R$ it follows that
\begin{equation}
  R > (n-1)\varepsilon . \label{eq:R>neps}
\end{equation}
Observe that (\ref{eq:R>neps}) implies that
$\mathcal{C} \neq \{1,\dots,n\}$. Indeed (\ref{eq:R>neps}) and (\ref{eq:maxqi-on-OX}) imply that $x_i >0$ for all $i \in \mathcal{C}$. This and the center of mass condition (\ref{eq:cc-cofmass}) implies that $\mathcal{C}$ cannot contain all bodies.
This implies that  the process of building $\mathcal{C}$ must stop after at most $n-2$ steps.
Therefore we obtained a cluster $\mathcal{C}$ with the following properties
\begin{eqnarray}
  p_i &\in& \overline{B(p_{i_0},(n-2)\varepsilon )}, \quad \forall i \in \mathcal{C}, \label{eqn:cluster-ball}\\
  |p_i - p_j| &>& \varepsilon, \quad i \in \mathcal{C}, j \notin \mathcal{C}.
\end{eqnarray}

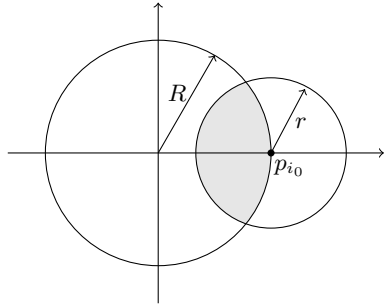
\begin{figure}[htb]
  \centering
\begin{tikzpicture}
\begin{scope}
\clip (3.5,2.0) circle(1.0);           \clip (2.0,2.0) circle(1.5);
\fill[gray!20] (2.5,0.0) rectangle (3.5,4.0);
\end{scope}

\draw[->] (0.0, 2.0) -- (5.0, 2.0);
\draw[->] (2.0, 0.0) -- (2.0, 4.0);

\draw[] (2.0, 2.0) circle (1.5cm);
\draw[->] (2.0, 2.0) -- (2.75, 3.3);

\draw[fill] (3.5, 2.0) circle (0.4mm);
\node[] at (3.75,1.8) {\small $p_{i_0}$};

\draw[] (3.5, 2.0) circle (1.0cm);
\draw[->] (3.5, 2.0) -- (3.95, 2.85);

\node[] at (2.25,2.8) {\small $R$};
\node[] at (3.9,2.4) {\small $r$};

\end{tikzpicture}
\caption{$R > (n-1)\varepsilon$ and $r = (n-2)\varepsilon$; the darkened area is the region where all the bodies from cluster are located.}\label{fig:cluster}
\end{figure}

Note that
\begin{equation*}
\sum_{\substack{i,j \in \mathcal{C}\\  i \neq j}} \frac{\mu_i\mu_j}{r_{ij}^3}(x_i - x_j) = 0
\end{equation*}
thus by adding equations for $x$'s in~(\ref{eqn:a-b})
for $i\in \mathcal{C}$ we obtain
\begin{eqnarray}
  a\sum_{i \in \mathcal{C}}\mu_i x_i  & =  & \sum_{\substack{i,j \in \mathcal{C}\\  i \neq j}}\frac{\mu_i \mu_j (x_i - x_j)}{r_{ij}^3}
  +   \sum_{\substack{i\in \mathcal{C}\\  j\not\in \mathcal{C}}}\frac{\mu_i \mu_j (x_i - x_j)}{r_{ij}^3}. \label{eq:sum-s-eq}
\end{eqnarray}

Let
\begin{eqnarray*}
  \mu_{\mathcal{C}}&=&\sum_{i \in \mathcal{C}} \mu_i, \\
  c_{\mathcal{C}}&=& \frac{1}{\mu_{\mathcal{C}}}\sum_{i \in \mathcal{C}} \mu_i x_i, \\
  F_{\mathcal{C}}&=& \frac{1}{\mu_{\mathcal{C}}}\sum_{\substack{i\in \mathcal{C}\\  j\not\in \mathcal{C}}} \frac{\mu_i \mu_j (x_i - x_j)}{r_{ij}^3}.
\end{eqnarray*}

Observe that (\ref{eq:sum-s-eq}) could be now rewritten as
\begin{equation}
   a\cdot c_{\mathcal{C}}=F_{\mathcal{C}}. \label{eq:cc-cluster-eq}
\end{equation}

It is easy to see (cf.~(\ref{eqn:cluster-ball})) that for $i\in\mathcal{C}$: $x_i \geqslant  R - (n-2) \varepsilon > 0$, hence
\begin{eqnarray*}
  |c_{\mathcal{C}}|  & = & \frac{1}{\mu_{\mathcal{C}}}\sum_{i \in \mathcal{C}} \mu_i x_i, \nonumber\\
  &\geqslant &  R - (n-2) \varepsilon ,   \label{eq:estm-Cs}
\end{eqnarray*}
and
\begin{eqnarray*}
|F_{\mathcal{C}}| &\leqslant &
 \frac{1}{\mu_{\mathcal{C}}}\sum_{\substack{i\in \mathcal{C}\\  j\not\in \mathcal{C}}}  \frac{\mu_i \mu_j }{r_{ij}^2} \leqslant
 \frac{1}{\varepsilon ^2} \frac{1}{\mu_{\mathcal{C}}}\sum_{\substack{i\in \mathcal{C}\\  j\not\in \mathcal{C}}}  \mu_i \mu_j\\
  & = & \frac{1}{\varepsilon ^2} \frac{1}{\mu_{\mathcal{C}}}\left(\sum_{i\in \mathcal{C}} \mu_i \right) \cdot \left(\sum_{j \notin \mathcal{C}}   \mu_j \right)=
  \frac{1}{\varepsilon ^2} \sum_{j \notin \mathcal{C}}   \mu_j <  \frac{M}{\varepsilon ^2}.
\end{eqnarray*}
Hence from the above and (\ref{eq:cc-cluster-eq}) we obtain
\begin{eqnarray*}
  a(R - (n-2) \varepsilon)  \leqslant  |a\cdot c_{\mathcal{C}}| = |F_{\mathcal{C}}| < \frac{M}{\varepsilon ^2}.
\end{eqnarray*}
This completes the proof of~(\ref{eq:x-max-dist}). The proof of (\ref{eq:y-max-dist}) is analogous.

\end{proof}

\textbf{Proof`of Theorem~\ref{thm:upp-bnd}}
We focus on the estimate on $R=\max_i |x_i|$. From Lemma~\ref{lem:upp-bnd} we have the following implication for any $\varepsilon$:
\begin{center}
if $(n-1)\varepsilon < R $, then $R < (n-2)\varepsilon + \frac{M}{a \varepsilon^2}$.
\end{center}
Therefore
\begin{equation}
  R \leq g(\varepsilon):=\max \left((n-1)\varepsilon,  (n-2)\varepsilon + \frac{M}{a \varepsilon^2} \right).
\end{equation}

We look for minimum of $g(\varepsilon)$.
It is easy to see that
\begin{eqnarray*}
  g(\varepsilon)&=&(n-1)\varepsilon, \quad \mbox{for $\varepsilon \geq \left(\frac{M}{a}\right)^{1/3}$}, \\
  g(\varepsilon)&=& g_2(\varepsilon):= (n-2)\varepsilon + \frac{M}{a \varepsilon^2}, \quad \mbox{for $\varepsilon \leq \left(\frac{M}{a}\right)^{1/3}$}
\end{eqnarray*}

The function $g_2(\varepsilon)$ has a global minimum at $\varepsilon_0=\left(\frac{2M}{(n-2)a} \right)^{1/3}$ and
 $$g(\varepsilon_0)=\left( \frac{M}{a}\right)^{1/3}\left(2^{1/3}+2^{-2/3} \right) (n-2)^{2/3}$$

Now $\varepsilon_0 \leq \left(\frac{M}{a}\right)^{1/3}$ iff $n \geq 4$, therefore
\begin{eqnarray*}
  \min g(\varepsilon )=(n-1)\left(\frac{M}{a}\right)^{1/3}, \quad \mbox{for $n \geq 2$}, \\
  \min g(\varepsilon )=\left( \frac{M}{a}\right)^{1/3}\left(2^{1/3}+2^{-2/3} \right) (n-2)^{2/3}, \quad  \mbox{for $n \geq 4$}.
\end{eqnarray*}

\qed

\subsection{Proof of Theorem~\ref{thm:coll-on-coord-axis}}\label{coll-proof}
We divide by $\mu_i$ the equations~(\ref{eqn:a-b})
and  take differences for $x_i$'s and $y_i$'s separately.  Then we obtain system~(\ref{eqn:a-b})
written in matrix form as
\begin{eqnarray}\label{eqn:A-matrix}
A\begin{bmatrix}
x_1 - x_2\\
x_2 - x_3\\
\ldots\\
x_{k-1} - x_k
\end{bmatrix}
 =
 a\begin{bmatrix}
x_1 - x_2\\
x_2 - x_3\\
\ldots\\
x_{k-1} - x_k
\end{bmatrix}
 & \quad\mbox{and}\quad &
A\begin{bmatrix}
y_1 - y_2\\
y_2 - y_3\\
\ldots\\
y_{k-1} - y_k
\end{bmatrix}
 =
 b\begin{bmatrix}
y_1 - y_2\\
y_2 - y_3\\
\ldots\\
y_{k-1} - y_k,
\end{bmatrix}
\end{eqnarray}
where $A\in \reals^{(k-1)\times (k-1)}$ is   matrix with

\begin{eqnarray*}
A_{ij} & = & -\delta_{ij} - \chi_{[1, i-1]}(j)\left(\sum_{t = 1}^j \frac{\mu_t}{r_{it}^3}\right) + \chi_{[i, n-1]}(j)\left(\sum_{t = j+1}^k \frac{\mu_t}{r_{it}^3}\right)\\
  &  & + \chi_{[1, i]}(j)\left(\sum_{t = 1}^j \frac{\mu_t}{r_{(i+1)t}^3}\right) - \chi_{[i+1, n-1]}(j)\left(\sum_{t = j+1}^k \frac{\mu_t}{r_{(i+1)t}^3}\right),
\end{eqnarray*}
where $\delta_{ij}$ is Kronecker delta and $\chi_{[l,p]}$ is the characteristic function of the interval $[l,p]$, i.e.\
$$\chi_{[l,p]}(j) = \left\{
\begin{array}{ll}
1, & l \leq j\leq p\\
0, & \mbox{otherwise}
\end{array}
\right.
$$

For example, matrix $A$ for $k = 3$ is:
$$
 A= \begin{bmatrix}
        \frac{\mu_1+\mu_2}{r_{12}^3} + \frac{\mu_3}{r_{13}^3},  &  \mu_3\left(\frac{1}{r_{13}^3}- \frac{1}{r_{23}^3} \right) \\
        \mu_1\left(\frac{1}{r_{13}^3} -\frac{1}{r_{12}^3}\right), & \frac{\mu_2+\mu_3}{r_{23}^3} + \frac{\mu_1}{r_{13}^3}  \\
\end{bmatrix}.
$$

Observe from~(\ref{eqn:A-matrix}) we see that  for any solution of (\ref{eqn:a-b}) the $x$-differences and $y$-differences are eigenvectors of $A$ of different eigenvalues $a,b$, respectively or are zero. This with collinearity assumption implies that the solution must be on the coordinate axis. The precise argument goes as follows.

From collinearity it follows that there exist $\lambda_i  \neq 0$, $i=2,\ldots, k-1$ such that
\begin{equation*}
  [x_i - x_{i+1},y_i - y_{i+1}]=\lambda_i  [x_1 - x_2,y_1 - y_2].
\end{equation*}
For the proof it is enough to show that $x_1-x_2=0$ or $y_1-y_2=0$.

We have
\begin{eqnarray*}
 {}[x_1-x_2,x_2-x_3,\dots,x_{k-1}-x_k]&=&(x_1-x_2)[1,\lambda_2,\dots,\lambda_{k-1}], \\
 {}[y_1-y_2,y_2-y_3,\dots,y_{k-1}-y_k]&=&(y_1-y_2)[1,\lambda_2,\dots,\lambda_{k-1}].
\end{eqnarray*}

Let
\begin{equation*}
\Lambda=[1,\lambda_2,\dots,\lambda_{k-1}]^t,
\end{equation*}
thus we have
\begin{eqnarray*}
  A \left((x_1 - x_2) \Lambda\right)= a \left((x_1 - x_2) \Lambda\right), \\
 A \left((y_1 - y_2) \Lambda\right)= b \left((y_1 - y_2) \Lambda\right)
  \end{eqnarray*}
Since $a \neq b$, then either $x_1-x_2=0$ or $y_1-y_2$. Hence we obtain our conclusion.

\qed

\subsection{Proof of Theorem~\ref{thm:iso-triangle}}\label{triangle-proof}

Since we are looking for an isosceles triangle symmetrical with respect to \OX\ and since
the center of mass is at the origin, we can assume that
\begin{equation*}
  p_1=(x,0), \quad p_2=\left(-\frac{x}{2},y\right), \quad p_3=\left(-\frac{x}{2},-y\right)
\end{equation*}
where $x\neq 0$ and $y\neq 0$, since the solution cannot be collinear. Without loss of generality, assume $x, y > 0$.

\begin{minipage}[h]{.95\textwidth}
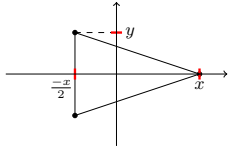
\begin{figure}[H]
  \centering
\resizebox{0.2\textwidth}{!}{%
\begin{tikzpicture}

\draw[fill] (1.5, 0.0) circle (0.4mm);   // p2
\draw[fill] (-0.75, 0.75) circle (0.4mm); // p3
\draw[fill] (-0.75, -0.75) circle (0.4mm);  // p4

\draw[red, very thick] (1.5, -0.1) -- (1.5, 0.1);
\node[] at (1.5, -0.21) {\small $x$};

\draw[] (-0.75, 0.75) -- (1.5, 0.0);  
\draw[] (-0.75, -0.75) -- (1.5, 0.0);  
\draw[] (-0.75, -0.75) -- (-0.75, 0.75);  

\draw[red, very thick] (-0.75, -0.1) -- (-0.75, 0.1);
\node[] at (-1.0, -0.25) {\small $\frac{-x}{2}$};

\draw[dashed] (-0.75, 0.75) -- (0.0, 0.75);
\draw[red, very thick] (-0.1,0.75) -- (0.1, 0.75);
\node[] at (0.25, 0.75) {\small $y$};

\draw[->, thin] (-2.0, 0.0) -- (2.0, 0.0);
\draw[->, thin] (0.0, -1.3) -- (0.0, 1.3);

\end{tikzpicture}
}
\caption{A sample isosceles triangle with $x, y>0$.}\label{fig:triangle}
\end{figure}
\end{minipage}

Then we have
\begin{equation}
r = r_{23}=2y, \quad s = r_{12}=r_{13}=\sqrt{\frac{9 x^2}{4} + y^2}. \label{eq:trowram-rij}
\end{equation}

From the above, it is easy to see that the system~(\ref{eqn:a-b})
can be reduced to
\begin{subequations}
\begin{align}
  a x &= \frac{3\mu x}{r_{12}^3}, \label{eq:3btx} \\
  b y &= \frac{\mu y}{r_{12}^3} + \frac{2\mu y}{r_{23}^3}. \label{eq:3bty}
\end{align}
\end{subequations}

Since $x \neq 0$ and $y \neq 0$, we obtain
\begin{eqnarray*}
a  &=& \frac{3\mu}{r_{12}^3}, \label{eq:3bsolx} \\
b &=& \frac{\mu}{r_{12}^3} + \frac{2\mu}{r_{23}^3}. \label{eq:3bsoly}
\end{eqnarray*}

Hence
\begin{subequations}
\begin{align}
  r_{12}^3 & =\frac{3\mu}{a}, \label{eq:trr-r12}\\
  r_{23}^3 & =\frac{2\mu}{b - \frac{a}{3}}, \label{eq:trr-r23}
  \end{align}
\end{subequations}
For the equation~(\ref{eq:trr-r23}) to make sense, the denominator must be positive ($r = r_{23} > 0$), thus
\begin{equation}
b > \frac{a}{3}.\label{eq:b>a/3}
\end{equation}

Note that~(\ref{eq:b>a/3}) is compatibile with the assumtion~(\ref{eqn:assumption}).
From~(\ref{eq:trowram-rij}) we compute
\begin{subequations}
\begin{align}
  y^3 & =\frac{\mu}{4(b - \frac{a}{3})},  \label{eq:trr-y3}\\
x^2  & = \left(\frac{2}{3}\right)^2(r_{12}^2 - y^2).
\end{align}
\end{subequations}
To have a real solution, we demand $r_{12}^2 - y^2 >0$, which implies~(\ref{eqn:assumption}).
\qed

\subsection{Proof of Theorem~\ref{thm:rhombus}}\label{rhombus-proof}
Assume that the rhombus is as on the figure below:
\begin{equation*}
p_1 = (x, 0)\quad p_2 = (0, y)\quad p_3 = (-x, 0)\quad p_4 = (0, -y),
\end{equation*}
where $x >0$ and $y > 0$.
Ratio of diagonals is $k = y/x > 0$.

\begin{minipage}[h]{.95\textwidth}
\begin{figure}[H]
  \centering
\resizebox{0.2\textwidth}{!}{%
\begin{tikzpicture}
\draw[fill] (0.0, 1.0) circle (0.4mm);  // p1
\draw[fill] (1.5, 0.0) circle (0.4mm);   // p2
\draw[fill] (-1.5, 0.0) circle (0.4mm); // p3
\draw[fill] (0.0, -1.0) circle (0.4mm);  // p4

\node[] at (1.5, 0.2) {\small $p_1$};
\node[] at (0.3, 1.05) {\small $p_2$};
\node[] at (-1.5, 0.2) {\small $p_3$};
\node[] at (0.3, -1.05) {\small $p_4$};

\draw[thick] (0.0, 1.0) -- (1.5, 0.0);  
\draw[thick] (0.0, 0.0) -- (0.0, 1.0);  
\draw[thick] (0.0, 0.0) -- (1.5, 0.0);  
\draw[dashed] (1.5, 0.0) -- (0.0, -1.0);
\draw[dashed] (0.0, -1.0) -- (-1.5, 0.0);
\draw[dashed] (-1.5, 0.0) -- (0.0, 1.0);

\node[] at (0.7, 0.7) {\small $r$};
\node[] at (-0.15, 0.4) {$y$};
\node[] at (0.3, -0.2) {$x$};

\draw[->, thin] (-2.0, 0.0) -- (2.0, 0.0);
\draw[->, thin] (0.0, -1.5) -- (0.0, 1.5);

\end{tikzpicture}
}
\end{figure}
\end{minipage}

It is easy to see that the system~(\ref{eqn:a-b})
can be reduced to the following two equations
\begin{subequations}
\begin{align}
a & =  \mu\frac{2}{r^3} + \mu\frac{1}{4x^3} \label{eqn:qa}\\
b & =  \mu\frac{2}{r^3} + \mu\frac{1}{4y^3}\label{eqn:qb}.
\end{align}\label{eqn:a-b-sol}
\end{subequations}
\quad\hfill


Since $y = kx$, it holds that  $r = x\sqrt{1 + k^2}$ and we can wrtite
\begin{subequations}
\begin{align}
a & =  \frac{\mu}{x^3}\left(\frac{2}{(\sqrt{ (1+ k^2)})^3} + \frac{1}{4}\right) \label{eqn:a-x3}\\
b & =  \frac{\mu}{x^3}\left(\frac{2}{(\sqrt{ (1+ k^2)})^3} + \frac{1}{4k^3}\right).\label{eqn:b-x3}
\end{align}\
\end{subequations}
Dividing the corresponding sides of~(\ref{eqn:a-x3}) and (\ref{eqn:b-x3}) we obtain~(\ref{eqn:k}). The  lhs of (\ref{eqn:k}),
 $$f(k) = \frac{8k^3 + k^3(\sqrt{ 1+ k^2})^3}{(\sqrt{ 1+ k^2})^3 + 8k^3},$$
is strictly increasing, $\lim_{k \to \infty} f(k)=\infty$ and $f(0) = 0$, so equation~(\ref{eqn:k}) has exactly one solution (see Fig.~\ref{fig:f-function}).

By~(\ref{eqn:qa}) we have
\begin{eqnarray}
a & = & \mu\frac{2}{r^3} + \mu\frac{1}{4x^3} >   \mu\frac{2}{r^3} + \mu\frac{1}{4r^3} =  \frac{9\mu}{4r^3},\label{eqn:r-a-bound}
\end{eqnarray}
and analogously from~(\ref{eqn:qb}) we obtain
\begin{equation}
b  > \frac{9\mu}{4r^3}.\label{eqn:r-b-bound}
\end{equation}
Combining~(\ref{eqn:r-a-bound}) and (\ref{eqn:r-b-bound}) we get the lower bound for $r$, namely
\begin{equation}
  r^3 > \max\left(\frac{9\mu}{4a},\frac{9\mu}{4b} \right)=\frac{9 \mu}{4 \min(a,b)}. \label{eq:rlbapp6}
\end{equation}
And again using~(\ref{eqn:a-b-sol}),  we compute $x^2$ and $y^2$:
\begin{eqnarray*}
x^2 & = & r^2\left(\frac{\mu}{4(ar^3 - 2\mu)} \right)^{\frac{2}{3}}\\
y^2 & = & r^2\left(\frac{\mu}{4(br^3 - 2\mu)} \right)^{\frac{2}{3}}.
\end{eqnarray*}
Observe that from (\ref{eq:rlbapp6}) it follows that denominators in the above equations are positive.

Hence
$$
r^2 =  x^2 + y^2 =  r^2\left(\frac{\mu}{4(ar^3 - 2\mu)} \right)^{\frac{2}{3}} +  r^2\left(\frac{\mu}{4(br^3 - 2\mu)} \right)^{\frac{2}{3}}
$$
and we obtain (\ref{eqn:m}).  Using lhs of (\ref{eqn:m}) we define a function
$$
g_1(r) = \left(\frac{2\mu}{ar^3 - 2\mu} \right)^{\frac{2}{3}} +  \left(\frac{2\mu}{br^3 - 2\mu} \right)^{\frac{2}{3}}.
$$
The equation~(\ref{eqn:m}) has exactly one solution, with $r  > \displaystyle{\left(\frac{2\mu}{\min\{a, b\}}\right)^{1/3}}$ (see Fig.~\ref{fig:function-sides})

\begin{figure}[H]
\centering
\begin{subfigure}{0.35\textwidth}
  \centering
\includegraphics[scale=0.45]{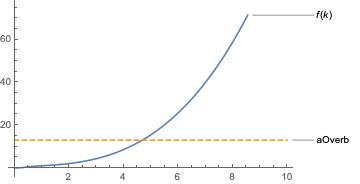}
\captionof{figure}{Function $f$ of a ratio of diagonals $k$. Yellow dashed line is at $a/b$. }\label{fig:f-function}
\end{subfigure}
\qquad
\begin{subfigure}{0.5\textwidth}
  \centering
\includegraphics[scale=0.5]{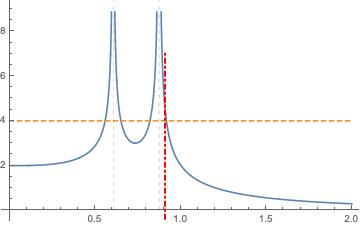}
 \captionof{figure}{Function $g_1$ of a size of sides $r$. Asymptotes (dashed vertical lines) are at $r = (2\mu/b)^{1/3}$ and $r = (2\mu/a)^{1/3}$; the lower bound for $r$ is marked as red line. }\label{fig:function-sides}
\end{subfigure}
\caption{Functions describing geometry of the rhombus.}
\end{figure}
\qed

\subsection{Proof of Theorem~\ref{thm:rectangle}}\label{rectangle-proof}
Assume that the rectangle is as on the figure below:
\begin{equation*}
p_1 = (x, y)\quad p_2 = (x, -y)\quad p_3 = (-x, -y)\quad p_4 = (-x, y),
\end{equation*}
where $x, y >0,$ $x = r\cos(\varphi)$ and $y = r\sin(\varphi)$.

\begin{minipage}[h]{.95\textwidth}
\begin{figure}[H]
  \centering
\resizebox{0.3\textwidth}{!}{%
\begin{tikzpicture}
\draw[fill] (1.6, 1.0) circle (0.4mm);   // p1
\draw[fill] (-1.6, 1.0) circle (0.4mm);  // p2
\draw[fill] (1.6, -1.0) circle (0.4mm); // p3
\draw[fill] (-1.6, -1.0) circle (0.4mm);  // p4

\draw[dashed] (1.6, 1.0) -- (1.6, -1.0);
\draw[dashed] (1.6, -1.0) -- (-1.6, -1.0);
\draw[dashed] (-1.6, -1.0) -- (-1.6, 1.0);
\draw[dashed] (-1.6, 1.0) -- (1.6, 1.0);

\node[] at (1.8, 1.2) {\small $p_1 = (x,y)$};
\node[] at (1.6, -1.2) {\small $p_2$};
\node[] at (-1.6, -1.2) {\small $p_3 = (-x, -y)$};
\node[] at (-1.6, 1.2) {\small $p_4$};

\draw[thick] (1.6, 1.0) -- (-1.6, -1.0);  

\draw[very thin, ->] (1.0,0) arc (0:40:0.75);
\node[] at (0.65, 0.15) {\small $\varphi$};

\node[] at (0.65, 0.6) {\small $r$};
\node[] at (-0.6, -0.6) {\small $r$};

\draw[->, thin] (-2.5, 0.0) -- (2.5, 0.0);
\draw[->, thin] (0.0, -1.5) -- (0.0, 1.5);

\end{tikzpicture}
}
\end{figure}
\end{minipage}

It is easy to see that the system~(\ref{eqn:a-b})
can be reduced to
\begin{subequations}
\begin{align}
  a  &=  \frac{\mu }{4}\left(\frac{1}{r^3} + \frac{1}{x^3}\right),\label{eqn:rec-qa}\\
  b  &=  \frac{\mu }{4}\left(\frac{1}{r^3}  + \frac{1}{y^3}\right).\label{eqn:rec-qb}
\end{align}\label{eqn:rec-a-b-sol}
\end{subequations}
and afterwards to
\begin{eqnarray*}
  a  &=&  \frac{\mu }{4r^3}\left(1 + \frac{1}{\cos^3(\varphi)}\right),\\
  b  &=&  \frac{\mu }{4r^3}\left(1 + \frac{1}{\sin^3(\varphi)}\right).
\end{eqnarray*}
Diving the corresponding sides of above equations we obtain~(\ref{eqn:phi}). The lhs of~(\ref{eqn:phi}),
$$f(\varphi) = \frac{\sin^3(\varphi)(\cos^3(\varphi) + 1)}{\cos^3(\varphi)(\sin^3(\varphi) + 1)},$$
is strictly increasing for $\varphi\in [0, \pi/2)$, $\lim_{\varphi \to \pi/2} f(\varphi)=\infty$ and $f(0) = 0$, so equation~(\ref{eqn:phi}) has unique solution in $[0, \pi/2)$ (see Fig.~\ref{fig:recatngle-function}).

By~(\ref{eqn:rec-a-b-sol}) we have
\begin{eqnarray*}
a  & =  & \frac{\mu}{4}\left(\frac{1}{r^3} + \frac{1}{x^3}\right)
    >  \frac{9\mu}{4r^3}\\
b  & =  & \frac{\mu}{4}\left(\frac{1}{r^3} + \frac{1}{y^3}\right)
    >  \frac{9\mu}{4r^3}
\end{eqnarray*}
Combining above inequalities we get the lower bound for $r$.
Again using~(\ref{eqn:rec-a-b-sol}),  we compute $x^2$ and $y^2$:
\begin{eqnarray*}
x^2 & = & r^2\left(\frac{\mu}{4ar^3 - \mu} \right)^{\frac{2}{3}}\\
y^2 & = & r^2\left(\frac{\mu}{4br^3 - \mu} \right)^{\frac{2}{3}}
\end{eqnarray*}
hence
$$
r^2 =  x^2 + y^2 =  r^2\left(\left(\frac{\mu}{4ar^3 - \mu} \right)^{\frac{2}{3}} + \left(\frac{\mu}{4br^3 - \mu} \right)^{\frac{2}{3}}\right)
$$
and we obtain (\ref{eqn:mu}). Using lhs of (\ref{eqn:m}) define a function
$$
g_2(r) = \left(\frac{\mu}{4ar^3 - \mu} \right)^{\frac{2}{3}} + \left(\frac{\mu}{4br^3 - \mu} \right)^{\frac{2}{3}}.
$$
The equation~(\ref{eqn:mu}) has exactly one solution, with $r  > (\mu/(4a))^{1/3}$ (see Fig.~\ref{fig:function-diagonals})

\begin{figure}[H]
\centering
\begin{subfigure}{0.35\textwidth}
  \centering
\includegraphics[scale=0.3]{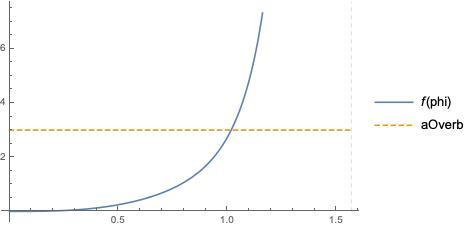}
\captionof{figure}{Function $f(\varphi)$; horizontal dashed line is at $a/b$. Note that, for any $a$, $b$ and $\varphi\in (0, \pi/2)$, the equation $f(\varphi) = a/b$ has exactly one solution. }\label{fig:recatngle-function}
\end{subfigure}
\qquad
\begin{subfigure}{0.5\textwidth}
  \centering
\includegraphics[scale=0.5]{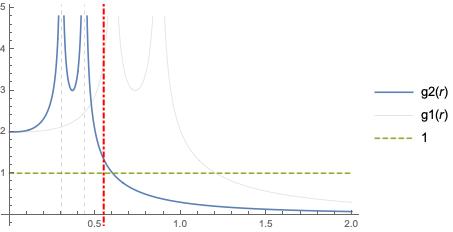}
 \captionof{figure}{Function $g_2(r)$. Asymptotes (dashed vertical lines) are at $r = (\mu/(4a))^{1/3}$ and $r = (\mu/(4b))^{1/3}$; red dotted-dashed line illustrates the lower bound for $r$. Thin gray lines are the graph of function $g_1$ for rhombus.}\label{fig:function-diagonals}
\end{subfigure}
\caption{Functions describing geometry of rectangle.}
\end{figure}
\qed

\subsection{Computation of $a,b$ at $L_4$ in the restricted three body problem}
\label{sec:L4ab}
We assume that point of mass $m_i$ is located at $p_i$, $i=1,2$ where

\begin{eqnarray*}
  p_1=\left(-1/2,0\right), \quad p_2=\left(1/2,0\right),   \label{eq:p1p2p3}
\end{eqnarray*}

The masses are normalized to $m_1 + m_2 = 1$.  The zero-mass (third) body has an
unknown location $z=(x,y)$. The equation for relative equilibria is $\nabla
V(z;m) = 0$, where

\begin{equation*}
V(z;m) = \frac12\|z - c\|^2 + \sum_{i=1}^3 m_i \| z - p_i\|^{-1}
\end{equation*}
where $c=\sum_{i=1}^2 m_i p_i$ is the center of mass.

It is well known  (see for example \cite{MD}) that  we have five relative equilibria: three collinear with the primaries (called $L_1,L_2,L_3$) and two triangular $L_4$ and $L_5$, which form an equilateral triangle with the primaries.  $L_4$ and $L_5$ are mutually symmetric with respect to the line passing through the primaries.
For any choice of positive masses $m_1$, $m_2$, such that  $m_1 + m_2 = 1$, we have $L_4=(0,\sqrt{3}/2)$ and $L_5=(0,-\sqrt{3}/2)$.

Expressions in the lemma below are computed using \emph{Mathematica} program~\cite{Mth}.
\begin{lemma}
\label{lem:sep-eigenval}

\begin{equation*}
D^2 V(L_4)= \begin{bmatrix}
 \frac{3}{4} & \frac{3}{4} \sqrt{3} (2 m_1-1) \\
 \frac{3}{4} \sqrt{3} (2 m_1-1) & \frac{9}{4} \\
\end{bmatrix}
=
\begin{bmatrix}
 \frac{3}{4} & \frac{3}{4} \sqrt{3} ( m_1-m_2) \\
 \frac{3}{4} \sqrt{3} ( m_1-m_2) & \frac{9}{4} \\
\end{bmatrix}
\end{equation*}

The matrix $D^2V(L_4)$ is positive definite.  Its eigenvalues $\lambda_1 > \lambda_2$ are given by
\begin{eqnarray*}
 \lambda_2 & = & \frac{3}{2} \left(1-\sqrt{3 m_1^2-3m_1+1}\right)= \frac{3}{2} \left(1-\sqrt{1-3 m_1 m_2}\right) , \\ 
\lambda_1 & = & \frac{3}{2} \left(\sqrt{3 m_1^2-3
   m_1+1}+1\right)=\frac{3}{2} \left(1+\sqrt{1-3 m_1 m_2}\right)
\end{eqnarray*}
and satisfy the following conditions
\begin{eqnarray*}
   \sqrt{3} &>& \lambda_1 - \lambda_2  \geq \frac{3}{2} \\
   \lambda_1 &<& \frac{3}{2}\left(1+ \frac{1}{\sqrt{3}} \right), \\
   \lambda_2&=&   \frac{9 m_1m_2}{4} + O((m_1m_2)^2).
\end{eqnarray*}
\end{lemma}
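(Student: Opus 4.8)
The plan is to compute $D^2V(L_4)$ directly from the formula for $V$, read off its trace and determinant, and then obtain the eigenvalues and all the inequalities by elementary one‑variable estimates in the parameter $t:=m_1m_2$.

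First I would record the geometric facts: since $m_1+m_2=1$, the centre of mass is $c=m_1p_1+m_2p_2=\bigl(\tfrac{m_2-m_1}{2},0\bigr)$, and $L_4=(0,\sqrt3/2)$ together with the primaries forms an equilateral triangle of side $1$, so $\|L_4-p_1\|=\|L_4-p_2\|=1$ with $L_4-p_1=(\tfrac12,\tfrac{\sqrt3}{2})$ and $L_4-p_2=(-\tfrac12,\tfrac{\sqrt3}{2})$. The quadratic term $\tfrac12\|z-c\|^2$ contributes the identity $I$ to the Hessian at every point, while each Newtonian term $\|z-p_i\|^{-1}$ contributes $\dfrac{3(z-p_i)(z-p_i)^{T}}{\|z-p_i\|^{5}}-\dfrac{I}{\|z-p_i\|^{3}}$. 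Evaluating at $L_4$ with $\|L_4-p_i\|=1$: the two $-I$ terms sum to $-(m_1+m_2)I=-I$ and cancel the identity; the diagonal parts of the rank‑one matrices add to $3\,\mathrm{diag}(\tfrac14,\tfrac34)$ and the off‑diagonal parts to $\tfrac{3\sqrt3}{4}(m_1-m_2)$, which is exactly the claimed matrix $D^2V(L_4)$ (and $m_1-m_2=2m_1-1$).

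Next, the trace of $D^2V(L_4)$ is $\tfrac34+\tfrac94=3$ and its determinant is $\tfrac34\cdot\tfrac94-\tfrac{27}{16}(m_1-m_2)^2=\tfrac{27}{16}\bigl(1-(m_1-m_2)^2\bigr)=\tfrac{27}{4}m_1m_2$, the last step using $1-(m_1-m_2)^2=(m_1+m_2)^2-(m_1-m_2)^2=4m_1m_2$. Hence the characteristic polynomial is $\lambda^2-3\lambda+\tfrac{27}{4}m_1m_2$, with roots $\lambda_{1,2}=\tfrac32\bigl(1\pm\sqrt{1-3m_1m_2}\bigr)$; since $m_1m_2=m_1(1-m_1)$ this equals $\tfrac32\bigl(1\pm\sqrt{3m_1^2-3m_1+1}\bigr)$. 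Positive definiteness is then immediate: the trace is $3>0$ and the determinant $\tfrac{27}{4}m_1m_2>0$, so both eigenvalues are positive; moreover by AM--GM $m_1m_2\le\tfrac14$, hence $1-3m_1m_2\ge\tfrac14>0$, the square root is real and nonzero, and $\lambda_1>\lambda_2$ strictly.

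For the remaining assertions set $t=m_1m_2\in(0,\tfrac14]$. Then $\lambda_1-\lambda_2=3\sqrt{1-3t}$ is strictly decreasing in $t$ with value $\tfrac32$ at $t=\tfrac14$, so $\lambda_1-\lambda_2\ge\tfrac32$ in all cases, while the sharper bound $\lambda_1-\lambda_2<\sqrt3$ (equivalently $9(1-3t)<3$, i.e.\ $t>\tfrac29$, i.e.\ $m_1\in(\tfrac13,\tfrac23)$) and likewise $\lambda_1=\tfrac32\bigl(1+\sqrt{1-3t}\bigr)<\tfrac32\bigl(1+\tfrac1{\sqrt3}\bigr)$ hold in the near‑equal‑mass regime used in the computations. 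Finally, rationalizing, $\lambda_2=\tfrac32\bigl(1-\sqrt{1-3t}\bigr)=\dfrac{\tfrac92 t}{1+\sqrt{1-3t}}=\tfrac94 t+O(t^2)=\tfrac94 m_1m_2+O((m_1m_2)^2)$ as $t\to0$. The only step requiring genuine care is the Hessian computation at $L_4$ — in particular the cancellation of the identity that relies on $m_1+m_2=1$, and correctly assembling the rank‑one terms $(z-p_i)(z-p_i)^{T}$; everything afterwards is bookkeeping.
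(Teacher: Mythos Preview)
Your argument is correct and follows essentially the same route as the paper: compute $D^2V(L_4)$ directly (the paper defers this to \emph{Mathematica}, you do it by hand via the rank-one decomposition and the cancellation of the identity using $m_1+m_2=1$), then read off trace and determinant to obtain the eigenvalues, and derive the inequalities from elementary bounds on $t=m_1m_2\in(0,\tfrac14]$.

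One point worth flagging: you are right that the upper bound $\lambda_1-\lambda_2<\sqrt3$ (equivalently $\lambda_1<\tfrac32(1+\tfrac{1}{\sqrt3})$) requires $m_1m_2>\tfrac29$, i.e.\ $m_1\in(\tfrac13,\tfrac23)$, and not merely $m_1\in(0,1)$. The paper's proof writes ``for $m_1\in(0,1)$ we obtain $\sqrt3>\lambda_1-\lambda_2\ge\tfrac32$'', which as stated is too strong on the upper side; your restriction to the near-equal-mass regime is the accurate reading, and it is the one actually used in the applications (where $m_1=m_2=\tfrac12$).
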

\proof
Expressions for $\lambda_{1,2}$  can be checked by direct computation. It remains to establish the remaining inequalities.
Since
\begin{equation*}
  3m_1^2 - 3m_1 + 1= 3\left(\left(m_1-\frac{1}{2}\right)^2 + \frac{1}{12}\right)=1-3m_1 m_2,
\end{equation*}
hence for $m_1 \in (0,1)$ we obtain
\begin{eqnarray*}
  \sqrt{3} > \lambda_1 - \lambda_2 & = & 3 \sqrt{3m_1^2 - 3m_1+1}  \geq \frac{3}{2} \\
  \lambda_1 & < & \frac{3}{2}\left(1+ \frac{1}{\sqrt{3}} \right)  
\end{eqnarray*}

\begin{eqnarray*}
  \lambda_2 & = & \frac{3}{2}\left( 1 - \sqrt{1-3m_1m_2}\right)\\
    & = & \frac{3}{2}\left(1-\left(1-\frac{3 m_1m_2}{2} + O((m_1m_2)^2)\right) \right)\\
    & = &     \frac{9 m_1m_2}{4} + O((m_1m_2)^2).
\end{eqnarray*}
\qed

In particular for the case of equal masses we have
\begin{lemma}
\label{lem:L4eigenVal-eq}
  Assume that $m_1=m_2=1/2$. Then the eigenvalues of $D^2V(L_4)$ are $\frac{3}{4}$ and $\frac{9}{4}$ with the eigenvectors $(1,0)$ and $(0,1)$,
  respectively. 
\end{lemma}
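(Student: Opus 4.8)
The plan is simply to specialize Lemma~\ref{lem:sep-eigenval} to the equal-mass case, so this is essentially a one-line corollary. First I would substitute $m_1=m_2=1/2$ into the explicit form of $D^2V(L_4)$ given in Lemma~\ref{lem:sep-eigenval}. The off-diagonal entry is $\frac{3}{4}\sqrt{3}\,(m_1-m_2)$, which vanishes when $m_1=m_2$, so
\begin{equation*}
D^2V(L_4)=\begin{bmatrix} \frac{3}{4} & 0 \\[1ex] 0 & \frac{9}{4} \end{bmatrix}.
\end{equation*}
A diagonal matrix has its diagonal entries as eigenvalues, with the standard coordinate vectors as the associated eigenvectors; hence $\frac34$ is an eigenvalue with eigenvector $(1,0)$ and $\frac94$ is an eigenvalue with eigenvector $(0,1)$, which is exactly the claim.

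As a cross-check (and an alternative route that does not require the full matrix, only the general eigenvalue formulas of Lemma~\ref{lem:sep-eigenval}), I would substitute $m_1m_2=\tfrac14$ into $\lambda_{1,2}=\tfrac32\bigl(1\pm\sqrt{1-3m_1m_2}\bigr)$: since $\sqrt{1-3/4}=\tfrac12$, one gets $\lambda_2=\tfrac32\cdot\tfrac12=\tfrac34$ and $\lambda_1=\tfrac32\cdot\tfrac32=\tfrac94$, in agreement. There is no real obstacle: the only nontrivial input is the already-established expression for the Hessian $D^2V(L_4)$ (verified symbolically in \cite{Mth}), and the equal-mass specialization makes it diagonal, from which everything follows immediately.
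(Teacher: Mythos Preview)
Your proposal is correct and is exactly the approach the paper implicitly takes: the paper states the lemma as an immediate specialization of Lemma~\ref{lem:sep-eigenval} (introduced with ``In particular for the case of equal masses we have'') and gives no separate proof, so your substitution $m_1=m_2=1/2$ into the explicit Hessian to obtain the diagonal matrix is precisely what is intended.
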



\begin{thebibliography}{AM}

\bibitem[AK12]{AK}  A.Albouy
V.Kaloshin, Finiteness of central configurations of five bodies in the plane,
\emph{Annals of mathematics} 176 (2012), 535--588

\bibitem[Al95]{Al95}  A.Albouy, Symmetry of central configurations of $4$ bodies,
\emph{Comptes rendus de l'Academie  des Sciences  Serie I-Mathematique } 320  (1995), 217--220

\bibitem[Al96]{Al96} A.Albouy, The symmetric central configurations of four equal masses,
\emph{Contemporary Mathematics} 198 (1996), 131--135

\bibitem[A82]{A} R. Arenstorf, Central configurations of four bodies with one inferior mass, \emph{Celestial Mechanics } 28 (1982) 9-15


\bibitem[F02]{F02} D.Ferrario, Central configurations, symmetries and fixed points, \emph{arxiv.math/0204198v1}, 2002


\bibitem[F07]{F07} D.Ferrario, Planar central configurations as fixed points.
\emph{J. Fixed Point Theory Appl.} 2 (2007), no. 2, 277–291.


\bibitem[HJ11]{HJ} M.Hampton and A.N.Jensen, Finiteness of spatial central configurations in the five-body
problem, \emph{Cel. Mech. Dyn. Astron.} 109 (2011), no. 4, 321--332.

\bibitem[HM05]{HM} M.Hampton and
R.Moeckel, Finiteness of relative equilibria of the four-body problem, \emph{Invent. math.} 163 (2005), no. 2, 289--312.

\bibitem[K69]{K} R.Krawczyk,  Newton-Algorithmen zur Besstimmung von Nullstellen mit Fehlerschranken, \emph{Computing},  4(1969) , 187--201

\bibitem[Mth]{Mth}  {\em The Software Engineering of Mathematica—Wolfram Mathematica 9 Documentation.} Reference.wolfram.com. 

\bibitem[MZ19]{MZ} M.Moczurad,
P.Zgliczy{\'n}ski, Central configurations in planar  $n$-body problem with equal masses for $n=5,6,7$,  \emph{Celestial Mechanics and Dynamical Astronomy}, (2019) 131: 46


\bibitem[MZ20]{MZ20} M.Moczurad,
P.Zgliczy{\'n}ski, Central configurations in spatial  $n$-body problem for $n=5,6$ with equal masses,  \emph{Celestial Mechanics and Dynamical Astronomy}, (2020) 132:56

\bibitem[Moe01]{M01} R.Moeckel,  Generic finiteness for Dziobek configurations,  \emph{Trans. Am. Math. Soc.} 353, 4673–4686 (2001)

\bibitem[Moe89]{MNum}  R.Moeckel, Some Relative Equilibria of N Equal Masses $N=4,5,6,7$,  \emph{http://www-users.math.umn.edu/~{}rmoeckel/research/CC.pdf}

\bibitem[Moe]{MSch} R.Moeckel, Central configurations,  Scho\-lar\-pedia, 9(4):10667, \\ \emph{http://www.scholarpedia.org/article/Central{\_}configurations}


\bibitem[Moe14]{Mlect2014} R.Moeckel, Lectures On Central Configurations, 2014, \emph{http://www-users.math.umn.edu/\~{}rmoeckel/notes/CentralConfigurations.pdf}


\bibitem[MD]{MD} C.D. Murray and S.F. Dermott, Solar System Dynamics, Cambrigde University Press, 1999

\bibitem[Mou]{Mou}  Moulton, F.R.,  The straight line solutions of the problem of N bodies. \emph{Annals of Math.}
Vol. 12, 1-17 (1910)

\bibitem[Si78]{Si78}  C.Simo, Relative equilibrium solutions in 4 body problem, \emph{Cel. Mech. Dyn. Astron.} 18 (1978), 165--184

\bibitem[Sm]{Sm} S. Smale, Topology and Mechanics II. The Planar n-Body Problem. \emph{Inventiones math.} 11, 45-64 (1970)

\bibitem[Sm98]{SmNext} S.Smale, Mathematical problems for the next century, \emph{Mathematical Intelligencer} 20 (1998), 7--15

\bibitem[W41]{Wintner} A.Wintner, \emph{The Analytical Foundations of Celestial Mechanics}, Princeton, N.J. Princeton University Press, 1941

\bibitem[X91]{X} Z. Xia,  \emph{Central Configurations with Many Small Masses}, Journal of Differential Equations 91, 168--179 (1991)
\end{thebibliography}
\end{document}